\newtheorem{thm}{Theorem}
\newtheorem{lemma}{Lemma}
\newtheorem{prop}{Proposition}
\newtheorem{defn}{Definition}
\newtheorem{remark}{Remark}
\newtheorem{example}{Example}
\newtheorem{nt}{Notation}
\begin{document}

\title[A new basis for the Homflypt skein module of the solid torus]
  {A new basis for the Homflypt skein module of the solid torus}

\author{Ioannis Diamantis}
\address{Department of Mathematics, National Technical University of Athens, Zografou Campus, GR-15780 Athens, Greece.}
\email{diamantis@math.ntua.gr}

\author{Sofia Lambropoulou}
\address{Departament of Mathematics, National Technical University of Athens, Zografou campus, GR-15780 Athens, Greece.}
\email{sofia@math.ntua.gr}
\urladdr{http://www.math.ntua.gr/~sofia}

\keywords{Homflypt skein module, solid torus, Iwahori--Hecke algebra of type B, mixed links, mixed braids, lens spaces. }

\subjclass[2010]{57M27, 57M25, 57N10, 20F36, 20C08}

\thanks{This research  has been co-financed by the European Union (European Social Fund - ESF) and Greek national funds through the Operational Program
``Education and Lifelong Learning" of the National Strategic Reference Framework (NSRF) - Research Funding Program: THALES: Reinforcement of the
interdisciplinary and/or inter-institutional research and innovation. }

\setcounter{section}{-1}

\date{}

\begin{abstract}
In this paper we give a new basis, $\Lambda$, for the Homflypt skein module of the solid torus, $\mathcal{S}({\rm ST})$, which was  predicted by Jozef Przytycki using topological interpretation. The basis $\Lambda$ is different from the basis $\Lambda^{\prime}$, discovered independently by Hoste--Kidwell \cite{HK} and Turaev \cite{Tu} with the use of diagrammatic methods, and also different from the basis of Morton--Aiston \cite{MA}. For finding the basis $\Lambda$ we use the generalized Hecke algebra of type B, $\textrm{H}_{1,n}$, defined by the second author in \cite{La2}, which is generated by looping elements and braiding elements and which is isomorphic to the affine Hecke algebra of type A. Namely, we start with the well-known basis of $\mathcal{S}({\rm ST})$, $\Lambda^{\prime}$, and an appropriate linear basis $\Sigma_n$ of the algebra $\textrm{H}_{1,n}$. We then convert elements in $\Lambda^{\prime}$ to linear combinations of elements in the new basic set $\Lambda$. This is done in two steps: First we convert elements in $\Lambda^{\prime}$ to elements in $\Sigma_n$. Then, using conjugation and the stabilization moves, we convert these elements to linear combinations of elements in $\Lambda$ by managing gaps in the indices of the looping elements and by eliminating braiding tails in the words. Further, we define an ordering relation in $\Lambda^{\prime}$ and $\Lambda$ and prove that the sets are totally ordered. Finally, using this ordering, we relate the sets $\Lambda^{\prime}$ and $\Lambda$ via a block diagonal matrix, where each block is an infinite lower triangular matrix with invertible elements in the diagonal and we prove linear independence of the set $\Lambda$. The infinite matrix is then ``invertible'' and thus, the set $\Lambda$ is a basis for $\mathcal{S}({\rm ST})$.

$\mathcal{S}({\rm ST})$ plays an important role in the study of Homflypt skein modules of arbitrary c.c.o. $3$-manifolds, since every c.c.o. $3$-manifold can be obtained by integral surgery along a framed link in $S^3$ with unknotted components. The new basis, $\Lambda$, of $\mathcal{S}({\rm ST})$ is appropriate for computing the Homflypt skein module of the lens spaces. The aim of this paper is to provide the basic algebraic tools for computing skein modules of c.c.o. 3-manifolds via algebraic means.
\end{abstract}

\maketitle

\section{Introduction}\label{intro}

Let $M$ be an oriented $3$-manifold, $R=\mathbb{Z}[u^{\pm1},z^{\pm1}]$, $\mathcal{L}$ the set of all oriented links in $M$ up to ambient isotopy in $M$ and
let $S$ the submodule of $R\mathcal{L}$ generated by the skein expressions $u^{-1}L_{+}-uL_{-}-zL_{0}$, where $L_{+}$, $L_{-}$ and $L_{0}$ are oriented links that have identical diagrams, except in one crossing, where they are as depicted in Figure~\ref{skein}.

\begin{figure}[!ht]
\begin{center}
\includegraphics[width=1.7in]{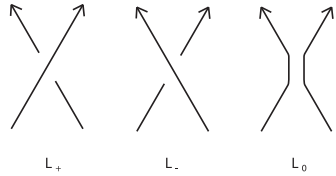}
\end{center}
\caption{The links $L_{+}, L_{-}, L_{0}$ locally.}
\label{skein}
\end{figure}

\noindent For convenience we allow the empty knot, $\emptyset$, and add the relation $u^{-1} \emptyset -u\emptyset =zT_{1}$, where $T_{1}$ denotes the trivial
knot. Then the {\it Homflypt skein module} of $M$ is defined to be:

\begin{equation*}
\mathcal{S} \left(M\right)=\mathcal{S} \left(M;{\mathbb Z}\left[u^{\pm 1} ,z^{\pm 1} \right],u^{-1} L_{+} -uL_{-} -zL{}_{0} \right)={\raise0.7ex\hbox{$
R\mathcal{L} $}\!\mathord{\left/ {\vphantom {R\mathcal{L} S }} \right. \kern-\nulldelimiterspace}\!\lower0.7ex\hbox{$ S  $}}.
\end{equation*}

\smallbreak

Unlike the Kauffman bracket skein module, the Homflypt skein module of a $3$-manifold, also known as \textit{Conway skein module} and as \textit{third skein module}, is very hard to compute (see  [P-2] for the case of the product of a surface and the interval).

\smallbreak

Let ST denote the solid torus. In \cite{Tu}, \cite{HK} the Homflypt skein module of the solid torus has been computed using diagrammatic methods by means of the following theorem:

\begin{thm}[Turaev, Kidwell--Hoste] \label{turaev}
The skein module $\mathcal{S}({\rm ST})$ is a free, infinitely generated $\mathbb{Z}[u^{\pm1},z^{\pm1}]$-module isomorphic to the symmetric
tensor algebra $SR\widehat{\pi}^0$, where $\widehat{\pi}^0$ denotes the conjugacy classes of non trivial elements of $\pi_1(\rm ST)$.
\end{thm}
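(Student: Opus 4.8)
The plan is to exhibit an explicit $R$-module isomorphism $\Phi \colon S R\widehat{\pi}^0 \to \mathcal{S}(\mathrm{ST})$ and verify that it is well defined, surjective, and injective. First I would unpack the algebraic target: since $\pi_1(\mathrm{ST}) \cong \mathbb{Z}$ is abelian, its conjugacy classes of nontrivial elements are just the nonzero integers, so $\widehat{\pi}^0 = \mathbb{Z}\setminus\{0\}$ and $R\widehat{\pi}^0$ is free on this set. Consequently $S R\widehat{\pi}^0$ is the polynomial $R$-algebra on generators $a_k$, $k \in \mathbb{Z}\setminus\{0\}$, whose $R$-basis is the set of monomials, i.e. finite multisets of nonzero integers. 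I would then define $\Phi$ by sending $a_k$ to the class of a simple closed curve winding $k$ times around $\mathrm{ST}$ and the product $a_{k_1}\cdots a_{k_n}$ to the disjoint union of $n$ such mutually unlinked, nested curves; this is well defined because disjoint union respects the grading by total homology class and the empty word maps to the empty knot.

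Next, for surjectivity I would represent every oriented link in $\mathrm{ST}$ by a diagram in the annulus and reduce it, via the Homflypt relation $u^{-1}L_+ - uL_- = zL_0$, to an $R$-combination of disjoint winding curves. The reduction proceeds by a double induction: on the total number of crossings and, at fixed crossing count, on an ordering of the components. Self-crossings of a single component are removed exactly as one unknots a knot in a ball (the standard descending-diagram argument over $S^3$), producing strictly lower-order diagrams; crossings between distinct components are switched to unlink them, again at the cost of lower-order $L_0$-smoothings. A component of winding number $0$ is null-homotopic, hence can be isotoped into a $3$-ball and absorbed into the coefficient ring using $u^{-1}\emptyset - u\emptyset = zT_1$, which replaces a trivial circle by the scalar $(u^{-1}-u)z^{-1}$; this is why only nonzero windings survive. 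Each remaining embedded essential curve of winding $k$ is isotopic to the standard generator for $a_k$, so the diagram lands in the image of $\Phi$.

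Finally, injectivity/linear independence is the crux, and I expect it to be the main obstacle. The clean starting observation is that the oriented resolution preserves the total $H_1$-class, so $\mathcal{S}(\mathrm{ST})$ is $\mathbb{Z}$-graded by total winding number $\sum k_i$, which already separates monomials of different degree. Within a fixed total winding, however, one must distinguish distinct multisets (e.g. $a_2$ from $a_1 a_1$), and for this I would need finer $R$-valued invariants that, paired against the candidate basis under a suitable ordering of multisets, yield an upper-triangular and hence invertible matrix. The natural source is the Homflypt polynomial of satellites obtained by embedding the annulus into $S^3$ with decorations, or equivalently the trace/Hopf-algebra structure on the skein of the annulus. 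Exhibiting this separating family explicitly and verifying the triangularity, rather than merely asserting freeness, is precisely where the real work lies; once it is in place the pairing is nonsingular, and combined with the spanning argument above this forces $\Phi$ to be an isomorphism onto a free, infinitely generated module.
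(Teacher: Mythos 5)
This theorem is not proved in the paper at all: it is quoted from Turaev \cite{Tu} and Hoste--Kidwell \cite{HK}, where it was established by diagrammatic methods, and it serves only as the known starting point (the basis $\Lambda^{\prime}$) for the paper's actual contribution. So your proposal cannot be compared with a proof inside the paper; it has to stand on its own, and as it stands it is incomplete.

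The genuine gap is the one you yourself flag: linear independence is never established. Your first two paragraphs give the standard spanning argument (reduce an annulus diagram via the skein relation, switch crossings to reach a standard form, absorb trivial components), and that half is acceptable as a sketch --- though note the slip that a winding-number-zero component is only null-\emph{homotopic}, not necessarily isotopic into a ball (a Whitehead double of the core is a counterexample); it can be pushed into a ball only after further crossing changes, i.e. modulo lower-order skein terms, so this must be folded into the induction rather than asserted as an isotopy. But spanning only yields a surjection $SR\widehat{\pi}^0 \to \mathcal{S}({\rm ST})$; the entire content of the theorem is \emph{freeness}, i.e. that no nontrivial $R$-linear combination of the multiset generators is killed by the skein relations. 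Your third paragraph describes what a proof would require --- a separating family of invariants and a triangularity argument --- but explicitly defers constructing it, and without that construction there is no proof: one cannot rule out that $\mathcal{S}({\rm ST})$ is a proper quotient of the free module. For the record, such a separating family does exist and can even be taken from the machinery this paper uses: the Markov trace recalled in Theorem~\ref{tr} satisfies ${\rm tr}(t^{k_0}{t^{\prime}_1}^{k_1}\cdots {t^{\prime}_{n-1}}^{k_{n-1}})=s_{k_{n-1}}\cdots s_{k_1}s_{k_0}$, so distinct elements of $\Lambda^{\prime}$ take distinct values in the independent variables $s_k$ (this is exactly the paper's remark that the invariant $X$ recovers $\mathcal{S}({\rm ST})$); Turaev and Hoste--Kidwell achieve the same separation by other means. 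Filling your gap amounts to carrying out one of these constructions in full.
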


A basic element of $\mathcal{S}({\rm ST})$ in the context of \cite{Tu, HK}, is illustrated in Figure~\ref{tur}. In the diagrammatic setting of \cite{Tu} and \cite{HK}, ST is considered as ${\rm Annulus} \times {\rm Interval}$. The Homflypt skein module of ST is particularly important, because any closed, connected, oriented (c.c.o.) $3$-manifold can be obtained by surgery along a framed link in $S^3$ with unknotted components.

\begin{figure}
\begin{center}
\includegraphics[width=1.3in]{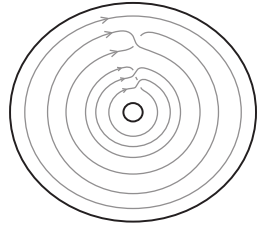}
\end{center}
\caption{A basic element of $\mathcal{S}({\rm ST})$.}
\label{tur}
\end{figure}

\smallbreak

A different basis of $\mathcal{S}({\rm ST})$, known as Young idempotent basis, is based on the work of Morton and Aiston \cite{MA} and Blanchet \cite{B}.

\smallbreak

In \cite{La2}, $\mathcal{S}({\rm ST})$ has been recovered using algebraic means. More precisely, the generalized Hecke algebra of type B, $\textrm{H}_{1,n}(q)$, is introduced, which is isomorphic to the affine Hecke algebra of type A, $\widetilde{\textrm{H}_n}(q)$. Then, a unique Markov trace is constructed on the algebras $\textrm{H}_{1,n}(q)$ leading to an invariant for links in ST, the universal analogue of the Homflypt polynomial for ST. This trace gives distinct values on distinct elements of the \cite{Tu, HK}-basis of $\mathcal{S}({\rm ST})$. The link isotopy in ST, which is taken into account in the definition of the skein module and which corresponds to conjugation and the stabilization moves on the braid level, is captured by the the conjugation property and the Markov property of the trace, while the defining relation of the skein module is reflected into the quadratic relation of $\textrm{H}_{1,n}(q)$. In the algebraic language of \cite{La2} the basis of $\mathcal{S}({\rm ST})$, described in Theorem~\ref{turaev}, is given in open braid form by the set $\Lambda^{\prime}$ in Eq.~\ref{Lpr}. Figure~\ref{els3} illustrates the basic element of Figure~\ref{tur} in braid notation. Note that in the setting of \cite{La2} ST is considered as the complement of the unknot (the bold curve in the figure). The looping elements $t^{\prime}_i \in \textrm{H}_{1,n}(q)$ in the monomials of $\Lambda^{\prime}$ are all conjugates, so they are consistent with the trace property and they enable the definition of the trace via simple inductive rules.

\smallbreak

In this paper we give a new basis $\Lambda$ for $\mathcal{S}({\rm ST})$ conjectured by the J.~H.~Przytycki, using the algebraic methods developed in \cite{La2}. The motivation of this work is the computation of $\mathcal{S} \left( L(p,q) \right)$ via algebraic means. The new basic set is described in Eq.~\ref{basis} in open braid form. The looping elements $t_i$ are in the algebras $\textrm{H}_{1,n}(q)$ and they are commuting. For a comparative illustration and for the defining formulas of the $t_i$'s and the $t_i^{\prime}$'s the reader is referred to Figure~\ref{genh} and Eq.~\ref{lgen} respectively. Moreover, the $t_i$'s are consistent with the handle sliding move or band move used in the link isotopy in $L(p,q)$, in the sense that a braid band move can be described naturally with the use of the $t_i$'s (see for example \cite{DL} and references therein).

Our main result is the following:

\begin{thm}\label{mainthm}
The following set is a $\mathbb{Z}[q^{\pm1}, z^{\pm1}]$-basis for $\mathcal{S}({\rm ST})$:
\begin{equation}\label{basis}
\Lambda=\{t^{k_0}t_1^{k_1}\ldots t_n^{k_n},\ k_i\in \mathbb{Z}\setminus\{0\}\ \forall i,\ n \in \mathbb{N} \}.
\end{equation}
\end{thm}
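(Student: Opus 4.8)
The plan is to prove that $\Lambda$ is a basis by establishing separately that it spans $\mathcal{S}(\mathrm{ST})$ and that it is linearly independent, using the known basis $\Lambda^{\prime}$ of Theorem~\ref{turaev} (in its algebraic incarnation from \cite{La2}) as a bridge. Since $\Lambda^{\prime}$ is already a basis, it suffices to exhibit an invertible change of basis between $\Lambda^{\prime}$ and $\Lambda$; the heart of the argument will be to show that expressing the $\Lambda^{\prime}$-monomials in terms of $\Lambda$ yields a transition matrix that is triangular with invertible diagonal entries once an appropriate grading and ordering are fixed.

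First, for spanning, I would fix for each $n$ a convenient linear basis $\Sigma_n$ of the generalized Hecke algebra $\textrm{H}_{1,n}(q)$, whose elements are products of the commuting looping generators $t_i$ followed by a braiding word in the finite Hecke algebra. Every element of $\Lambda^{\prime}$ is a monomial in the conjugate looping elements $t_i^{\prime}$, so using the defining relations (Eq.~\ref{lgen}) together with the braid and quadratic relations of $\textrm{H}_{1,n}(q)$ I would rewrite each such monomial as a linear combination of elements of $\Sigma_n$; this first conversion is a purely algebraic computation inside $\textrm{H}_{1,n}(q)$. I would then pass from $\Sigma_n$ to $\Lambda$ at the level of the skein module, i.e.\ modulo the conjugation and Markov (stabilization) properties that encode link isotopy in ST. Two reductions are needed here: eliminating the braiding tails, so that a braiding word at the end of a monomial is absorbed or removed by conjugation and stabilization and only a monomial in the $t_i$ survives, and managing gaps in the indices, so that the surviving exponents sit on consecutive looping generators $t_1,\ldots,t_n$ with nonzero exponents, matching the precise form of $\Lambda$ in Eq.~\ref{basis}. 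Carrying out both reductions shows that $\Lambda$ spans $\mathcal{S}(\mathrm{ST})$.

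For independence I would define a total ordering on $\Lambda^{\prime}$ and on $\Lambda$, ordered first by a homological/degree grading (the total exponent sum, which is preserved by the skein relation and fixes the $\pi_1(\mathrm{ST})$-class) and then lexicographically by the exponent sequences. The expectation is that the two conversion steps above are compatible with this ordering, in the sense that the image of a given $\Lambda^{\prime}$-element equals its corresponding $\Lambda$-element plus strictly lower-order terms, with an invertible coefficient (a unit in $\mathbb{Z}[q^{\pm1},z^{\pm1}]$) on the diagonal. This makes the infinite transition matrix block diagonal, one block per grading, each block lower triangular with invertible diagonal entries, hence ``invertible'' in the required sense; invertibility then transfers the basis property from $\Lambda^{\prime}$ to $\Lambda$.

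The hard part will be precisely the triangularity claim: controlling that the braiding-tail elimination and the gap management never produce a $\Lambda$-term that is equal to or higher than the intended leading term in the chosen order, and verifying that the diagonal coefficients are genuine units. This requires careful bookkeeping of how conjugation and stabilization act on the exponents and on the ordering, and it is exactly where the choice of ordering must be tuned so that every off-diagonal contribution falls strictly below the diagonal.
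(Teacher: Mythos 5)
Your proposal follows essentially the same route as the paper: converting $\Lambda^{\prime}$ into the intermediate Hecke-algebra basis $\Sigma_n$, then reducing to $\Lambda$ by managing index gaps and eliminating braiding tails via conjugation and stabilization, and finally ordering both sets by total exponent sum (level) and a lexicographic refinement so that the transition matrix is block diagonal with lower triangular, invertible-diagonal blocks. The paper's proof is exactly this plan carried out in detail (Sections~2--5), so your outline is a correct description of its strategy.
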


\begin{figure}
\begin{center}
\includegraphics[width=1.3in]{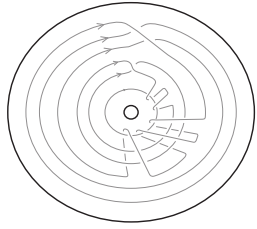}
\end{center}
\caption{An element of the new basis $\Lambda$.}
\label{tbasis}
\end{figure}

Our method for proving Theorem~\ref{mainthm} is the following:

\smallbreak

$\bullet$ We define total orderings in the sets $\Lambda^{\prime}$ and $\Lambda$ and

\smallbreak

$\bullet$ we show that the two ordered sets are related via a lower triangular infinite matrix with invertible elements on the diagonal.

\smallbreak

More precisely, two analogous sets, $\Sigma_n$ and $\Sigma^{\prime}_n$, are given in \cite{La2} as linear bases for the algebra $\textrm{H}_{1,n}(q)$. See Theorem~\ref{basesH} in this paper. The set $\bigcup_n \Sigma_n$ includes $\Lambda$ as a proper subset and the set $\bigcup_n \Sigma^{\prime}_n$ includes $\Lambda^{\prime}$ as a proper subset. The sets $\Sigma_n$ come directly from the works of S.~Ariki and K.~Koike, and M.~Brou\`{e} and G.~Malle on the cyclotomic Hecke algebras of type B. See \cite{La2} and references therein. The second set $\bigcup_n \Sigma^{\prime}_n$ includes $\Lambda^{\prime}$ as a proper subset. The sets $\Sigma^{\prime}_n$ appear naturally in the structure of the braid groups of type B, $B_{1,n}$; however, it is very complicated to show that they are indeed basic sets for the algebras $\textrm{H}_{1,n}(q)$. The sets $\Sigma_n$ play an intrinsic role  in the proof of Theorem~\ref{mainthm}. Indeed, when trying to convert a monomial $\lambda^{\prime}$ from $\Lambda^{\prime}$ into a linear combination of elements in $\Lambda$ we pass by elements of the sets $\Sigma_n$. This means that in the converted expression of $\lambda^{\prime}$ we have monomials in the $t_i$'s, with possible gaps in the indices followed by monomials in the braiding generators $g_i$. So, in order to reach expressions in the set $\Lambda$ we need:

\smallbreak

$\bullet$ to manage the gaps in the indices of the $t_i$'s and

\smallbreak

$\bullet$ to eliminate the braiding `tails'.

\smallbreak

The paper is organized as follows. In Section~1 we recall the algebraic setting and the results needed from \cite{La2}. In Section~2 we define the orderings in the two sets $\Lambda$ and $\Lambda^{\prime}$ and we prove that the sets are totally ordered. In Section~3 we prove a series of lemmas for converting elements in $\Lambda^{\prime}$ to elements in the sets $\Sigma_n$. In Section~4 we convert elements in $\Sigma_n$ to elements in $\Lambda$ using conjugation and the stabilization moves. Finally in Section~5 we prove that the sets $\Lambda^{\prime}$ and $\Lambda$ are related through a lower triangular infinite matrix mentioned above. A computer program converting elements in $\Lambda^{\prime}$ to elements in $\Sigma_n$ has been developed by K. Karvounis and will be soon available on $http://www.math.ntua.gr/^{\sim}sofia$.

\smallbreak

The algebraic techniques developed here will serve as basis for computing Homflypt skein modules of arbitrary c.c.o. $3$-manifolds using the braid approach. The advantage of this approach is that we have an already developed homogeneous theory of braid structures and braid equivalences for links in c.c.o. $3$-manifolds (\cite{LR1, LR2, DL}). In fact, these algebraic techniques are used and developed further in \cite{KL} for knots and links in $3$-manifolds represented by the $2$-unlink.

\section{The Algebraic Settings}

\subsection{Mixed Links in $S^3$}

We now view ST as the complement of a solid torus in $S^3$. An oriented link $L$ in ST can be represented by an oriented \textit{mixed link} in $S^{3}$, that is a link in $S^{3}$ consisting of the unknotted fixed part $\widehat{I}$ representing the complementary solid torus in $S^3$ and the moving part $L$ that links
with $\widehat{I}$.

\smallbreak

A \textit{mixed link diagram }is a diagram $\widehat{I}\cup \widetilde{L}$ of $\widehat{I}\cup L$ on the plane of $\widehat{I}$, where this plane is equipped with the top-to-bottom direction of $I$.

\begin{figure}
\begin{center}
\includegraphics[width=1.1in]{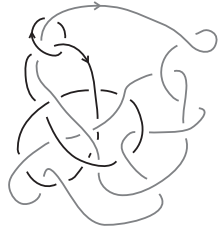}
\end{center}
\caption{A mixed link in $S^3$.}
\label{mlink}
\end{figure}

\smallbreak

Consider now an isotopy of an oriented link $L$ in ST. As the link moves in ST, its corresponding mixed link will change in $S^{3}$ by a sequence
of moves that keep the oriented $\widehat{I}$ pointwise fixed. This sequence of moves consists in isotopy in the $S^{3}$ and the \textit{mixed Reidemeister
moves}. In terms of diagrams we have the following result for isotopy in ST:

\smallbreak

The mixed link equivalence in $S^{3}$ includes the classical Reidemeister moves and the mixed Reidemeister moves, which involve the fixed and the
standard part of the mixed link, keeping $\widehat{I}$ pointwise fixed.

\subsection{Mixed Braids in $S^3$}

\noindent By the Alexander theorem for knots in solid torus, a mixed link diagram $\widehat{I}\cup \widetilde{L}$ of $\widehat{I}\cup L$ may be turned into a
\textit{mixed braid} $I\cup \beta $ with isotopic closure. This is a braid in $S^{3}$ where, without loss of generality, its first strand represents
$\widehat{I}$, the fixed part, and the other strands, $\beta$, represent the moving part $L$. The subbraid $\beta$ shall be called the \textit{moving part} of
$I\cup \beta $.

\begin{figure}
\begin{center}
\includegraphics[width=2in]{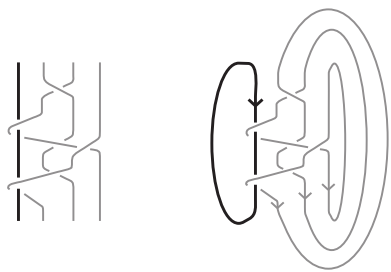}
\end{center}
\caption{The closure of a mixed braid to a mixed link.}
\label{mbtoml}
\end{figure}

The sets of braids related to the ST form groups, which are in fact the Artin braid groups type B, denoted $B_{1,n}$, with presentation:

\[ B_{1,n} = \left< \begin{array}{ll}  \begin{array}{l} t, \sigma_{1}, \ldots ,\sigma_{n-1}  \\ \end{array} & \left| \begin{array}{l}
\sigma_{1}t\sigma_{1}t=t\sigma_{1}t\sigma_{1} \ \   \\
 t\sigma_{i}=\sigma_{i}t, \quad{i>1}  \\
{\sigma_i}\sigma_{i+1}{\sigma_i}=\sigma_{i+1}{\sigma_i}\sigma_{i+1}, \quad{ 1 \leq i \leq n-2}   \\
 {\sigma_i}{\sigma_j}={\sigma_j}{\sigma_i}, \quad{|i-j|>1}  \\
\end{array} \right.  \end{array} \right>, \]

\noindent where the generators $\sigma _{i}$ and $t$ are illustrated in Figure~\ref{gen}.

\begin{figure}
\begin{center}
\includegraphics[width=2.3in]{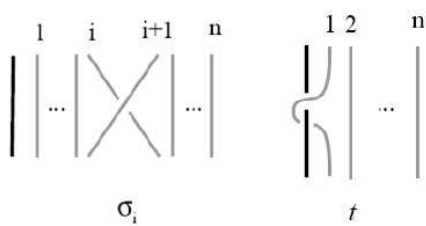}
\end{center}
\caption{The generators of $B_{1,n}$.}
\label{gen}
\end{figure}

Isotopy in ST is translated on the level of mixed braids by means of the following theorem.

\begin{thm}[Theorem~3, \cite{La1}] \label{markov}
 Let $L_{1} ,L_{2}$ be two oriented links in ST and let $I\cup \beta_{1} ,{\rm \; }I\cup \beta_{2}$ be two corresponding mixed braids in $S^{3}$. Then $L_{1}$ is isotopic to $L_{2}$ in ST if and only if $I\cup \beta_{1}$ is equivalent to $I\cup \beta_{2}$ in $\mathop{\cup }\limits_{n=1}^{\infty } B_{1,n}$ by the following moves:

\[ \begin{array}{clll}
(i)  & Conjugation:         & \alpha \sim \beta^{-1} \alpha \beta, & {\rm if}\ \alpha ,\beta \in B_{1,n}. \\
(ii) & Stabilization\ moves: &  \alpha \sim \alpha \sigma_{n}^{\pm 1} \in B_{1,n+1}, & {\rm if}\ \alpha \in B_{1,n}. \\
\end{array} \]
\end{thm}

\subsection{The Generalized Iwahori-Hecke Algebra of type B}

It is well known that $B_{1,n}$ is the Artin group of the Coxeter group of type B, which is related to the Hecke algebra of type B, $\textrm{H}_{n}{(q,Q)}$ and to the cyclotomic Hecke algebras of type B. In \cite{La2} it has been established that all these algebras form a tower of B-type algebras and are related to the knot theory of ST. The basic one is $\textrm{H}_{n}{(q,Q)}$, a presentation of which is obtained from the presentation of the Artin group $B_{1,n}$ by adding the quadratic relations

\begin{equation}\label{quad}
{g_{i}^2=(q-1)g_{i}+q}
\end{equation}

\noindent and the relation $t^{2} =\left(Q-1\right)t+Q$, where $q,Q \in {\mathbb C}\backslash \{0\}$ are seen as fixed variables. The middle B--type algebras are the cyclotomic Hecke algebras of type B, $\textrm{H}_{n}(q,d)$, whose presentations are obtained by the quadratic relation~(\ref{quad}) and $t^d=(t-u_{1})(t-u_{2}) \ldots (t-u_{d})$. The topmost Hecke-like algebra in the tower is the \textit{generalized Iwahori--Hecke algebra of type B}, $\textrm{H}_{1,n}(q)$, which, as observed by T.tom Dieck, is isomorphic to the affine Hecke algebra of type A, $\widetilde{\textrm{H}}_n(q)$ (cf. \cite{La2}). The algebra $\textrm{H}_{1,n}(q)$ has the following presentation:

\[
\textrm{H}_{1,n}{(q)} = \left< \begin{array}{ll}  \begin{array}{l} t, g_{1}, \ldots ,g_{n-1}  \\ \end{array} & \left| \begin{array}{l} g_{1}tg_{1}t=tg_{1}tg_{1} \ \
\\
 tg_{i}=g_{i}t, \quad{i>1}  \\
{g_i}g_{i+1}{g_i}=g_{i+1}{g_i}g_{i+1}, \quad{1 \leq i \leq n-2}   \\
 {g_i}{g_j}={g_j}{g_i}, \quad{|i-j|>1}  \\
 {g_i}^2=(q-1)g_{i}+q, \quad{i=1,\ldots,n-1}
\end{array} \right.  \end{array} \right>.
\]

\noindent That is:

\begin{equation*}
\textrm{H}_{1,n}(q)= \frac{{\mathbb Z}\left[q^{\pm 1} \right]B_{1,n}}{ \langle \sigma_i^2 -\left(q-1\right)\sigma_i-q \rangle}.
\end{equation*}

Note that in $\textrm{H}_{1,n}(q)$ the generator $t$ satisfies no polynomial relation, making the algebra $\textrm{H}_{1,n}(q)$ infinite dimensional. Also that in \cite{La2} the algebra $\textrm{H}_{1,n}(q)$ is denoted as $\textrm{H}_{n}(q, \infty)$.

\smallbreak

In \cite{Jo} V.F.R. Jones gives the following linear basis for the Iwahori-Hecke algebra of type A, $\textrm{H}_{n}(q)$:

{\small

$$
S =\left\{(g_{i_{1} }g_{i_{1}-1}\ldots g_{i_{1}-k_{1}})(g_{i_{2} }g_{i_{2}-1 }\ldots g_{i_{2}-k_{2}})\ldots (g_{i_{p} }g_{i_{p}-1 }\ldots g_{i_{p}-k_{p}})\right\}, \mbox{ for } 1\le i_{1}<\ldots <i_{p} \le n-1{\rm \; }.
$$
}

\noindent The basis $S$ yields directly an inductive basis for $\textrm{H}_{n}(q)$, which is used in the construction of the Ocneanu trace, leading to the Homflypt or $2$-variable Jones polynomial.

In $\textrm{H}_{1,n}(q)$ we define the elements:
\begin{equation}\label{lgen}
t_{i}:=g_{i}g_{i-1}\ldots g_{1}tg_{1} \ldots g_{i-1}g_{i}\ \rm{and}\ t^{\prime}_{i}:=g_{i}g_{i-1}\ldots g_{1}tg_{1}^{-1}\ldots g_{i-1}^{-1}g_{i}^{-1},
\end{equation}
as illustrated in Figure~\ref{genh}.

\smallbreak

In \cite{La2} the following result has been proved.

\begin{thm}[Proposition~1, Theorem~1 \cite{La2}] \label{basesH}
The following sets form linear bases for ${\rm H}_{1,n}(q)$:
\[
\begin{array}{llll}
 (i) & \Sigma_{n} & = & t_{i_{1} } ^{k_{1} } t_{i_{2} } ^{k_{2} } \ldots t_{i_{r}}^{k_{r} } \cdot \sigma ,\ {\rm where}\ 1\le i_{1} <\ldots <i_{r} \le n-1,\\
     &            &   &                                                                                                                                       \\
 (ii) & \Sigma^{\prime} _{n} & = &  {t^{\prime}_{i_1}}^{k_{1}} {t^{\prime}_{i_2}}^{k_{2}} \ldots {t^{\prime}_{i_r}}^{k_{r}} \cdot \sigma ,\ {\rm where}\ 1\le i_{1} < \ldots <i_{r} \le n, \\
\end{array}
\]
\noindent where $k_{1}, \ldots ,k_{r} \in {\mathbb Z}$ and $\sigma$ a basic element in $\textrm{H}_{n}(q)$.
\end{thm}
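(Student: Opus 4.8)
The plan is to prove each of (i) and (ii) by the two usual steps, spanning and linear independence, handling (i) first as the structural foundation on which (ii) is bootstrapped. For spanning in (i), I would begin by recording how loopings and braidings interact: directly from the defining relations of $\textrm{H}_{1,n}(q)$ one gets the recursion $t_i = g_i t_{i-1} g_i$ (with $t_0 := t$), the pairwise commutativity $t_i t_j = t_j t_i$, and ``exchange'' identities rewriting each product $g_j t_i$ as a $\mathbb{Z}[q^{\pm1}]$-combination of terms $t_\bullet g_\bullet$, the quadratic relation $g_j^2 = (q-1)g_j + q$ being used to resolve any square that appears. Equipped with these, I would show by induction on $n$ that an arbitrary word in $t, g_1, \dots, g_{n-1}$ can be driven into the shape (monomial in the commuting $t_i$ with strictly increasing indices) times (element of $\textrm{H}_n(q)$), and then expand the trailing $\textrm{H}_n(q)$-factor in Jones' basis $S$. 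Using $S$ rather than an arbitrary spanning set keeps the induction clean, because $S$ is itself inductive and meshes with the index bookkeeping of the loopings.

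The real content is linear independence for (i). The most economical route is to invoke the isomorphism $\textrm{H}_{1,n}(q) \cong \widetilde{\textrm{H}}_n(q)$ and match the commuting family $\{t_0, \dots, t_{n-1}\}$ with the Bernstein lattice generators; then $\Sigma_n$ is precisely a Bernstein--Lusztig basis $\{X^{\lambda} T_w\}$, whose independence is classical. A more self-contained variant, aligned with the Ariki--Koike and Brou\`e--Malle sources cited, uses the cyclotomic quotients: imposing $t^d = (t-u_1)\cdots(t-u_d)$ sends $\textrm{H}_{1,n}(q)$ onto the Ariki--Koike algebra $\textrm{H}_n(q,d)$, which is free of rank $d^n n!$ on the family of $\Sigma_n$-monomials whose looping exponents lie in $\{0, \dots, d-1\}$. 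Given a hypothetical finite linear relation among elements of $\Sigma_n$, all exponents are bounded, say in $[-M, M]$; multiplying through by $t_0^M \cdots t_{n-1}^M$ (legitimate because each $t_i$ is invertible) makes every exponent nonnegative and bounded by $2M$, and then choosing $d > 2M$ sends the distinct surviving monomials to distinct Ariki--Koike basis elements, forcing all coefficients to vanish. The only point to watch is that the specialization stays faithful on the relevant finite sub-family, which is why $d$ is taken large and the invertibility of the $t_i$ is used.

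For set (ii) I would deliberately avoid re-running the independence argument from scratch, since the $t'_i$ do not commute and a direct attack is forbiddingly intricate; instead I would set up a triangular change of basis from $\Sigma'_n$ to $\Sigma_n$. Starting from $t'_i = g_i t'_{i-1} g_i^{-1}$ and resolving each inverse via $g_j^{-1} = q^{-1} g_j - (1 - q^{-1})$, each $t'_i$ expands as a unit multiple of $t_i$ plus a $\mathbb{Z}[q^{\pm1}]$-combination of terms of strictly smaller looping complexity (fewer or lower-index loopings, carrying braiding tails). Ordering monomials by this complexity, the transition matrix between the two sets is triangular with unit (power-of-$q$) diagonal, hence invertible over $\mathbb{Z}[q^{\pm1}]$, and since $\Sigma_n$ is a basis so is $\Sigma'_n$. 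I expect the heaviest part of the whole argument to lie exactly here: controlling the braiding tails produced when the $g_j^{-1}$ are expanded, so that they provably lower the chosen complexity order and never feed back higher-index loopings. This is the ``very complicated'' verification the paper flags, and it is the step I would budget the most care for.
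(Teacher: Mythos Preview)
The paper does not supply a proof of this statement: it is quoted verbatim from \cite{La2} (Proposition~1 and Theorem~1 there), and Remark~1(ii) additionally points to \cite{D} for a more direct argument that the $\Sigma'_n$ are bases. There is thus no in-paper proof to compare against.

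On its own merits your plan is sound and tracks the strategy of the cited sources. The spanning argument for $\Sigma_n$ via the interaction rules and induction on $n$ is the standard reduction. Your second route to independence for (i)---shift exponents by left-multiplying by $t_0^M\cdots t_{n-1}^M$ (legitimate since the $t_i$ commute pairwise and sit to the left of the braiding tail) and then specialize to a cyclotomic quotient $\textrm{H}_n(q,d)$ with $d>2M$---is exactly how the Ariki--Koike freeness result is leveraged; one only has to take the parameters $u_1,\dots,u_d$ generic so the quotient is genuinely free of rank $d^n\,n!$. For (ii), your triangular change-of-basis idea is precisely what the present paper carries out in Section~3 (Theorem~7 and the lemmas preceding it), albeit in the reverse direction and only for the gap-free subset $\Lambda'\subset\Sigma'_n$; extending the triangularity to all of $\Sigma'_n$ with a well-founded ordering that also accounts for the braiding factor $\sigma$ is indeed the labor-intensive step, and you are right to single it out. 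The one caveat is that the ordering of Definition~2 in this paper deliberately ignores the braiding part because it works modulo conjugation and stabilization in the skein module; at the algebra level you will need a finer ordering that controls the $\textrm{H}_n(q)$ tail as well.
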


\begin{figure}
\begin{center}
\includegraphics[width=3.2in]{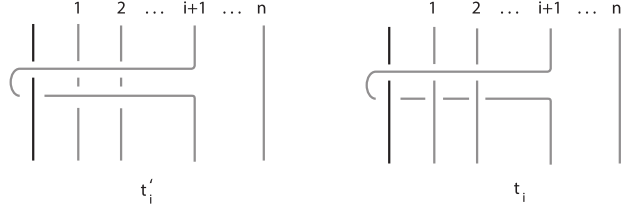}
\end{center}
\caption{The elements $t^{\prime}_{i}$ and $t_{i}$.}
\label{genh}
\end{figure}

\begin{remark}\label{conind}\rm
\begin{itemize}
\item[(i)] The indices of the $t^{\prime}_i$'s in the set $\Sigma^{\prime}_n$ are ordered but are not necessarily consecutive, neither do they
need to start from $t$.
\item[(ii)] A more straight forward proof that the sets $\Sigma_n^{\prime}$ form bases for $\textrm{H}_{1,n}(q)$ can be found in \cite{D}.
\end{itemize}
\end{remark}

In \cite{La2} the basis $\Sigma^{\prime}_{n}$ is used for constructing a Markov trace on $\bigcup _{n=1}^{\infty }\textrm{H}_{1,n}(q)$.

\begin{thm}[Theorem~6, \cite{La2}] \label{tr}
Given $z,s_{k}$, with $k\in {\mathbb Z}$ specified elements in $R={\mathbb Z}\left[q^{\pm 1} \right]$, there exists
a unique linear Markov trace function
\begin{equation*}
{\rm tr}:\bigcup _{n=1}^{\infty }{\rm H}_{1,n}(q)  \to R\left(z,s_{k} \right),k\in {\mathbb Z}
\end{equation*}

\noindent determined by the rules:

\[
\begin{array}{lllll}
(1) & {\rm tr}(ab) & = & {\rm tr}(ba) & \quad {\rm for}\ a,b \in {\rm H}_{1,n}(q) \\
(2) & {\rm tr}(1) & = & 1 & \quad \textrm{for}\ all\ {\rm H}_{1,n}(q) \\
(3) & {\rm tr}(ag_{n}) & = & z{\rm tr}(a) & \quad \textrm{for}\ a \in {\rm H}_{1,n}(q) \\
(4) & {\rm tr}(a{t^{\prime}_{n}}^{k}) & = & s_{k}{\rm tr}(a) & \quad \textrm{for}\ a \in {\rm H}_{1,n}(q),\ k \in {\mathbb Z}. \\
\end{array}
\]
\end{thm}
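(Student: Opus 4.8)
The plan is to follow V.~F.~R.~Jones' construction of the Ocneanu trace on the type~A Hecke algebras \cite{Jo}, adapting it to the presence of the looping generator $t$. Everything is driven by the inductive structure of the tower $\textrm{H}_{1,n-1}(q)\subset \textrm{H}_{1,n}(q)$ supplied by the basis $\Sigma'_n$ of Theorem~\ref{basesH}. Writing a basis element as ${t'_{i_1}}^{k_1}\cdots {t'_{i_r}}^{k_r}\,\sigma$ with $\sigma$ in Jones' basis $S$ of $\textrm{H}_n(q)$, the generators of top index can enter only as the rightmost looping factor ${t'_{n-1}}^{k}$ or through a single occurrence of $g_{n-1}$ inside $\sigma$ (in Jones' inductive form $\sigma=\sigma'\,g_{n-1}g_{n-2}\cdots g_{j}$ with $\sigma'\in\textrm{H}_{n-1}(q)$). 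I would first record this as a spanning statement: modulo $\textrm{H}_{1,n-1}(q)$, every element of $\textrm{H}_{1,n}(q)$ is a left $\textrm{H}_{1,n-1}(q)$-combination of terms $w\,{t'_{n-1}}^{k}$ and $w\,g_{n-1}\,v$ with $w,v\in \textrm{H}_{1,n-1}(q)$, so that rules (3) and (4) always have something to act on.

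Uniqueness I would treat first, as it uses only that the rules pin down values. By downward induction on $n$, with rule (2) providing the base case on $\textrm{H}_{1,1}(q)$: in each term above I move the top generator to the right using the trace property (1) together with the braid and commutation relations, and then apply rule (3) to remove $g_{n-1}$ or rule (4) to remove ${t'_{n-1}}^{k}$. Every such step drops the index by one, so the value on each basis element is forced, and by linearity the functional is unique.

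For existence I would turn this around: \emph{define} $\text{tr}$ on $\textrm{H}_{1,n}(q)$ by declaring its values on $\Sigma'_n$ through exactly the reductions just described, and then verify that the resulting $R(z,s_k)$-linear functional really satisfies (1)--(4). Properties (2), (3), (4) hold by construction, so the whole weight falls on the trace property (1). By bilinearity it is enough to prove $\text{tr}(a\,x)=\text{tr}(x\,a)$ for $a$ a basis element of $\Sigma'_n$ and $x$ one of the generators $t,g_1,\ldots,g_{n-1}$; I would carry this out by induction on $n$, splitting into cases according to whether $x$ commutes with, braids with, or shares the index of the top generator of $a$, invoking in each case the relevant defining relation --- the quadratic relation $g_i^2=(q-1)g_i+q$, the braid relations, the commutations $tg_i=g_it$ for $i>1$, and the mixed relation $g_1tg_1t=tg_1tg_1$.

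The heart of the difficulty, and the main obstacle, is this consistency check for (1). Since the spanning description holds only up to lower-index terms, commuting $g_{n-1}$ or ${t'_{n-1}}^{k}$ past the remainder of a word generates correction terms, and one must confirm that after rules (3)/(4) are applied the two sides $\text{tr}(ax)$ and $\text{tr}(xa)$ reduce to the \emph{same} element of $R(z,s_k)$. The most delicate cases are those in which $x=g_1$ interacts with the looping generator through the mixed relation $g_1tg_1t=tg_1tg_1$, and those in which removing ${t'_{n-1}}^{k}$ meets an adjacent $g_{n-1}$ via the identity $t'_{n-1}=g_{n-1}\,t'_{n-2}\,g_{n-1}^{-1}$; here the quadratic relation feeds the parameters $s_k$ back into lower-level traces, and verifying that the whole infinite family $\{s_k\}_{k\in\mathbb{Z}}$ closes up coherently is what makes the bookkeeping heavier than in the classical type~A construction.
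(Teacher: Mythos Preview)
The paper does not prove this theorem; it is quoted from \cite{La2} as background and no argument is given here, so there is no in-paper proof to compare against. That said, your sketch matches exactly the strategy the paper itself signals: the sentence immediately following the statement notes that ``the use of the looping elements $t_i^{\prime}$ enable the trace ${\rm tr}$ to be defined by just extending the three rules of the Ocneanu trace on the algebras ${\rm H}_n(q)$ \cite{Jo} by rule (4),'' which is precisely your plan --- use the inductive basis $\Sigma'_n$ of Theorem~\ref{basesH}, peel off the top-index generator via rule (3) or (4), and verify the trace property (1) by induction on $n$, case-splitting on the generator $x$. Your identification of the delicate points (the mixed relation $g_1tg_1t=tg_1tg_1$ and the interaction of ${t'_{n-1}}^{k}$ with $g_{n-1}$ through $t'_{n-1}=g_{n-1}t'_{n-2}g_{n-1}^{-1}$) is accurate, and this is indeed where the bookkeeping in \cite{La2} is concentrated.
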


Note that the use of the looping elements $t_i^{\prime}$ enable the trace ${\rm tr}$ to be defined by just extending the three rules of the Ocneanu trace on the algebras ${\rm H}_n(q)$ \cite{Jo} by rule (4). Using $\textrm{tr}$ Lambropoulou constructed a universal Homflypt-type invariant for oriented links in ST. Namely, let $\mathcal{L}$ denote the set of oriented links in ST. Then:

\begin{thm} [Definition~1, \cite{La2}] \label{inv}
The function $X:\mathcal{L}$ $\rightarrow R(z,s_{k})$

\begin{equation*}
X_{\widehat{\alpha}}=\left[-\frac{1-\lambda q}{\sqrt{\lambda } \left(1-q\right)} \right]^{n-1} \left(\sqrt{\lambda } \right)^{e}
{\rm tr}\left(\pi \left(\alpha \right)\right),
\end{equation*}

\noindent where $\alpha \in B_{1,n}$ is a word in the $\sigma _{i}$'s and $t^{\prime}_{i} $'s, $e$ is the exponent sum of the $\sigma _{i}$'s in $\alpha $, and
$\pi$ the canonical map of $B_{1,n}$ in ${\rm H}_{1,n}(q)$, such that $t\mapsto t$ and $\sigma _{i} \mapsto g_{i} $, is an invariant of oriented links in ST.
\end{thm}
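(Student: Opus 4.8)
The plan is to reduce invariance under link isotopy in ST to invariance under the two braid-equivalence moves supplied by Theorem~\ref{markov}, and then to verify each move using the defining properties of the trace together with the normalizing factors in the definition of $X$. First, by the Alexander-type theorem for ST recalled above, every oriented link $L$ in ST is the closure $\widehat{\alpha}$ of some mixed braid $I\cup\alpha$ with $\alpha\in B_{1,n}$; thus $X$ is defined on all of $\mathcal{L}$ as soon as we know its value does not depend on the chosen braid representative. By Theorem~\ref{markov}, two mixed braids close to isotopic links in ST precisely when they differ by a finite sequence of (i) conjugations $\alpha\sim\beta^{-1}\alpha\beta$ within a fixed $B_{1,n}$ and (ii) stabilization moves $\alpha\sim\alpha\sigma_n^{\pm1}$ passing between $B_{1,n}$ and $B_{1,n+1}$. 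Hence it suffices to show that the scalar $X_{\widehat{\alpha}}$ is unchanged under each of these two moves.

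For conjugation I would observe that the three quantities entering the formula are each preserved. The strand number $n$ is fixed by the move. The exponent sum $e$ of the $\sigma_i$'s is preserved because the $\sigma_i$-contributions of $\beta$ and $\beta^{-1}$ cancel, while the generator $t$, and hence each $t'_i$, contributes zero to $e$. Finally, since $\beta$ is a word in the invertible generators of $B_{1,n}$, its image $\pi(\beta)$ is invertible in $\textrm{H}_{1,n}(q)$, so applying the trace property $\textrm{tr}(ab)=\textrm{tr}(ba)$ (rule (1) of Theorem~\ref{tr}) with $a=\pi(\beta)^{-1}\pi(\alpha)$ and $b=\pi(\beta)$ yields $\textrm{tr}\bigl(\pi(\beta^{-1}\alpha\beta)\bigr)=\textrm{tr}(\pi(\alpha))$. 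Therefore $X_{\widehat{\beta^{-1}\alpha\beta}}=X_{\widehat{\alpha}}$.

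The crux of the argument is the stabilization moves, where the normalizing constants and the parameter $\lambda$ are forced. Under positive stabilization we have $n\mapsto n+1$, $e\mapsto e+1$, and by rule (3) of Theorem~\ref{tr}, $\textrm{tr}(\pi(\alpha)g_n)=z\,\textrm{tr}(\pi(\alpha))$; invariance of $X$ then reduces to the single scalar identity $-\frac{(1-\lambda q)}{\sqrt{\lambda}(1-q)}\cdot\sqrt{\lambda}\cdot z=1$, which determines $\lambda$. Under negative stabilization we have $n\mapsto n+1$, $e\mapsto e-1$, and here I would use the quadratic relation~(\ref{quad}) in the form $g_n^{-1}=q^{-1}\bigl(g_n-(q-1)\bigr)$ to compute, via rules (1)--(3), that $\textrm{tr}(\pi(\alpha)g_n^{-1})=\frac{z-q+1}{q}\,\textrm{tr}(\pi(\alpha))$; invariance now reduces to a second scalar identity, $-\frac{(1-\lambda q)(z-q+1)}{\lambda q(1-q)}=1$.

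The main point to check, and the only place where something could conceivably go wrong, is that these two identities are satisfied by one and the same $\lambda$. Solving the positive-stabilization equation gives $\lambda=\frac{z-q+1}{qz}$, whence $1-\lambda q=\frac{q-1}{z}$; substituting into the negative-stabilization identity, I expect the left-hand side to collapse to $\frac{q-1}{1-q}\cdot(-1)=1$ identically, exactly as in the classical Homflypt normalization, so that no further constraint on $\lambda$ arises. Once this compatibility is confirmed, $X$ is invariant under conjugation and under both stabilization moves, hence, by Theorem~\ref{markov}, it descends to a well-defined function on isotopy classes of oriented links in ST, completing the proof. I would emphasize that all the substantive work (the Markov theorem for ST and the existence of the trace) is already in hand, so the argument is a normalization and bookkeeping computation; the negative-stabilization case, requiring the quadratic relation and the verification that $\lambda$ is consistent across both signs, is the only genuinely delicate step.
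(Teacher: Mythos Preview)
Your argument is correct and is exactly the standard verification one would write for such a normalized-trace invariant. However, there is nothing in the paper to compare it against: Theorem~\ref{inv} is not proved in this paper at all, it is merely quoted from \cite{La2} (note the attribution ``Definition~1, \cite{La2}''), so the paper contains no proof of its own for this statement. Your proof is the expected one---reduce to Theorem~\ref{markov}, use trace rule~(1) for conjugation, use rule~(3) and the quadratic relation for the two stabilizations, and check that the single value $\lambda=\frac{z-q+1}{qz}$ makes both stabilization identities hold---and it is carried out correctly, including the negative-stabilization compatibility check.
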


The invariant $X$ satisfies a skein relation \cite{La1}. Theorems~\ref{basesH}, \ref{tr} and \ref{inv} hold also for the algebras $\textrm{H}_{n}(q,Q)$ and $\textrm{H}_{n}(q,d)$, giving rise to all possible Homflypt-type invariants for knots in ST. For the case of the Hecke algebra of type B, $\textrm{H}_{n}(q,Q)$, see also \cite{La1} and \cite{LG}.

\subsection{The basis of $\mathcal{S}({\rm ST})$ in algebraic terms}

Let us now see how $\mathcal{S}({\rm ST})$ is described in the above algebraic language. We note first that an element $\alpha$ in the basis of $\mathcal{S}({\rm ST})$ described in Theorem~\ref{turaev} when ST is considered as ${\rm Annulus} \times {\rm Interval}$, can be illustrated equivalently as a mixed link in $S^3$ when ST is viewed as the complement of a solid torus in $S^3$. So we correspond the element $\alpha$ to the minimal mixed braid representation, which has increasing order of twists around the fixed strand. Figure~\ref{els3} illustrates an example of this correspondence. Denoting

\begin{equation}\label{Lpr}
\Lambda^{\prime}=\{ {t^{\prime}_1}^{k_1}{t^{\prime}_2}^{k_2} \ldots
{t^{\prime}_n}^{k_n}, \ k_i \in \mathbb{Z}\setminus\{0\} \ \forall i,\ n\in \mathbb{N} \},
\end{equation}

\noindent we have that $\Lambda^{\prime}$ is a subset of $\bigcup_n{\textrm{H}_{1,n}}$. In particular $\Lambda^{\prime}$ is a subset of $\bigcup_n{\Sigma^{\prime}_n}$.

\begin{figure}
\begin{center}
\includegraphics[width=3.3in]{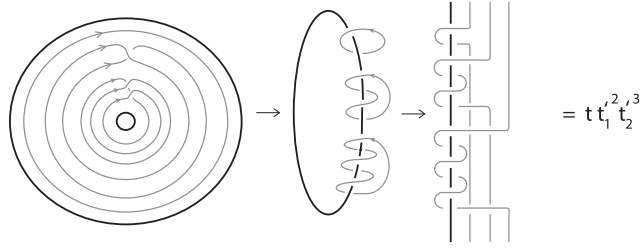}
\end{center}
\caption{An element in $\Lambda^{\prime}$.}
\label{els3}
\end{figure}

\begin{figure}
\begin{center}
\includegraphics[width=3.5in]{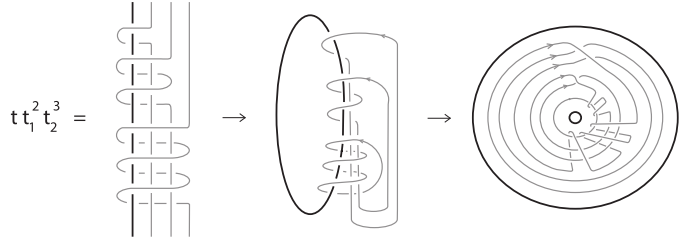}
\end{center}
\caption{An element of $\Lambda$.}
\label{els4}
\end{figure}

Applying the inductive trace rules to a word $w$ in $\bigcup_n\Sigma^{\prime}_n$ will eventually give rise to linear combinations of monomials in
$R(z, s_k)$. In particular, for an element of $\Lambda^{\prime}$ we have:

$${\rm tr}(t^{k_0}{t^{\prime}_1}^{k_1}\ldots {t^{\prime}_{n-1}}^{k_{n-1}})=s_{k_{n-1}}\ldots s_{k_{1}}s_{k_{0}}.$$

Further, the elements of $\Lambda^{\prime}$ are in bijective correspondence with increasing $n$-tuples of integers, $(k_0,k_1,\ldots,k_{n-1})$, $n \in
\mathbb{N}$, and these are in bijective correspondence with monomials in $s_{k_0},s_{k_1}, \ldots, s_{k_{n-1}}$.

\begin{remark}
\rm The invariant $X$ recovers the Homflypt skein module of ST since it gives different values for different elements of $\Lambda^{\prime}$ by
rule~4 of the trace.
\end{remark}

\section{An ordering in the sets $\Lambda$ and $\Lambda^{\prime}$}

In this section we define an ordering relation in the sets $\Lambda$ and $\Lambda^{\prime}$. Before that, we will need the notion of the index of a word in $\Lambda^{\prime}$ or in $\Lambda$ .

\begin{defn} \label{index} \rm
The {\it index} of a word $w$ in $\Lambda^{\prime}$ or in $\Lambda$, denoted $ind(w)$, is defined to be the highest index of the $t_i^{\prime}$'s, resp. of the $t_i$'s, in $w$. Similarly, the \textit{index} of an element in $\Sigma_n^{\prime}$ or in $\Sigma_n$ is defined in the same way by ignoring possible gaps in the indices of the looping generators and by ignoring the braiding part in $\textrm{H}_{n}(q)$. Moreover, the index of a monomial in $\textrm{H}_{n}(q)$ is equal to $0$.
\end{defn}

\noindent For example, $ind({t^{\prime}}^{k_0}{t^{\prime}_1}^{k_1}\ldots {t^{\prime}_n}^{k_n})=ind(t^{u_0}\ldots t_n^{u_n})=n$.

\begin{defn} \label{order}
\rm
 We define the following {\it ordering} in the set $\Lambda^{\prime}$. Let $w={t^{\prime}_{i_1}}^{k_1}{t^{\prime}_{i_2}}^{k_2}\ldots {t^{\prime}_{i_{\mu}}}^{k_{\mu}}$ and $\sigma={t^{\prime}_{j_1}}^{\lambda_1}{t^{\prime}_{j_2}}^{\lambda_2}\ldots {t^{\prime}_{j_{\nu}}}^{\lambda_{\nu}}$, where $k_t , \lambda_s \in \mathbb{Z}$, for all $t,s$. Then:

\smallbreak

\begin{itemize}
\item[(a)] If $\sum_{i=0}^{\mu}k_i < \sum_{i=0}^{\nu}\lambda_i$, then $w<\sigma$.

\vspace{.1in}

\item[(b)] If $\sum_{i=0}^{\mu}k_i = \sum_{i=0}^{\nu}\lambda_i$, then:

\vspace{.1in}

\noindent  (i) if $ind(w)<ind(\sigma)$, then $w<\sigma$,

\vspace{.1in}

\noindent  (ii) if $ind(w)=ind(\sigma)$, then:

\vspace{.1in}

\noindent \ \ \ \ ($\alpha$) if $i_1=j_1, i_2=j_2, \ldots , i_{s-1}=j_{s-1}, i_{s}<j_{s}$, then $w>\sigma$,

\vspace{.1in}

\noindent \ \ \ \ ($\beta$) if $i_t=j_t\ \forall t$ and $k_{\mu}=\lambda_{\mu}, k_{\mu-1}=\lambda_{\mu-1}, \ldots k_{\i+1}=\lambda_{i+1}, |k_i|<|\lambda_i|$, then $w<\sigma$,

\vspace{.1in}

\noindent \ \ \ \ ($\gamma$) if $i_t=j_t\ \forall t$ and $k_{\mu}=\lambda_{\mu}, k_{\mu-1}=\lambda_{\mu-1}, \ldots k_{\i+1}=\lambda_{i+1}, |k_i|=|\lambda_i|$ and $k_i>\lambda_i,$

\vspace{.1in}

\noindent \ \ \ \ \ \ \ \ \ then $w<\sigma$,

\vspace{.1in}

\noindent \ \ \ \ ($\delta$) if $i_t=j_t\ \forall t$ and $k_i=\lambda_i$, $\forall i$, then $w=\sigma$.

\vspace{.1in}

\item[(c)] In the general case where $w={t^{\prime}_{i_1}}^{k_1}{t^{\prime}_{i_2}}^{k_2}\ldots {t^{\prime}_{i_{\mu}}}^{k_{\mu}} \cdot \sigma_1$ and $\sigma={t^{\prime}_{j_1}}^{\lambda_1}{t^{\prime}_{j_2}}^{\lambda_2}\ldots {t^{\prime}_{j_{\nu}}}^{\lambda_{\nu}}\cdot \sigma_2$, where $\sigma_1, \sigma_2 \in \textrm{H}_n(q)$, the ordering is defined in the same way by ignoring the braiding parts $\sigma_1, \sigma_2$.
\end{itemize}
\end{defn}

The same ordering is defined on the set $\Lambda$, where the $t^{\prime}_i$'s are replaced by the corresponding $t_i$'s. Moreover, the same ordering is defined on the sets $\Sigma_n$ and $\Sigma_n^{\prime}$ by ignoring the braiding parts.

\begin{prop}
The set $\Lambda^{\prime}$ equipped with the ordering given in Definition~\ref{order}, is totally ordered set.
\end{prop}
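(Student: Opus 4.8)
The plan is to verify that the relation defined in Definition~\ref{order} satisfies the three axioms of a total order: totality (any two elements are comparable), antisymmetry together with reflexivity (captured by the $w=\sigma$ clause in case $(\delta)$), and transitivity. Since the ordering is defined by a cascade of lexicographic-style comparisons on several numerical invariants of a word, the natural strategy is to show that the relation is induced by a well-defined \emph{injective key map} into a lexicographically ordered set. Concretely, I would attach to each $w\in\Lambda^{\prime}$ a tuple of comparison data, read off in the precise priority order of the definition, and then argue that $w<\sigma$ holds if and only if the key of $w$ precedes the key of $\sigma$ in the induced lexicographic order. Once the ordering is exhibited as the pullback of a genuine linear order along an injection, totality, antisymmetry and transitivity are all inherited automatically, and the work reduces to checking that the key map is well defined and injective.

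First I would fix notation for the key. Given $w={t^{\prime}_{i_1}}^{k_1}\ldots {t^{\prime}_{i_\mu}}^{k_\mu}$ (ignoring the braiding tail $\sigma_1$ by case $(c)$, and recalling each exponent is nonzero), the primary comparison coordinate is the total exponent sum $\sum k_i$, and the secondary coordinate is $ind(w)$; these are plain integers and are ordered in the usual way by $(a)$ and $(b)(\mathrm{i})$. The delicate part is the tertiary data in case $(b)(\mathrm{ii})$, where two words share both exponent sum and index. Here the definition compares first the sequences of indices $(i_1,\ldots,i_\mu)$ and $(j_1,\ldots,j_\nu)$ at their first point of disagreement, with clause $(\alpha)$ declaring the word with the \emph{smaller} first differing index to be the \emph{larger} element, and only when the full index sequences coincide does it descend, via $(\beta)$ and $(\gamma)$, to compare exponents from the top index downward using $(|k_i|,\ \mathrm{sign})$ as a tie-break. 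I would encode all of this by defining the tertiary key as the pair consisting of the index sequence ordered by reverse-lexicographic comparison (to match the sign reversal in $(\alpha)$) followed by the reversed exponent sequence compared by the ordered pairs $(|k_i|, -k_i)$, and then verify term by term that $<$ on this composite key reproduces exactly clauses $(\alpha)$, $(\beta)$, $(\gamma)$, and that equality of the entire key is exactly clause $(\delta)$, i.e. $w=\sigma$ as words.

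The main obstacle I anticipate is bookkeeping rather than any genuine mathematical difficulty: one must confirm that the several clauses are jointly \emph{exhaustive} and \emph{mutually exclusive}, so that every pair $(w,\sigma)$ falls under exactly one case and no pair is assigned contradictory relations. In particular I would check that when exponent sums and indices agree, comparing the index sequences always terminates: because both words have the same index $ind(w)=ind(\sigma)$ as their largest index, either the sequences differ at some first position (handled by $(\alpha)$) or they are identical as sets of used indices, in which case $\mu=\nu$ and clauses $(\beta)$--$(\delta)$ apply to the exponents. The subtle point is that $(\alpha)$ only covers the index-sequence comparison while $(\beta)$--$(\delta)$ presuppose $i_t=j_t$ for all $t$; I must confirm these two regimes partition the $ind(w)=ind(\sigma)$ case with no overlap and no omission. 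Granting this, the reversed-exponent comparison by $(|k_i|,-k_i)$ is itself a standard lexicographic order on $\mathbb{Z}^\mu$, hence transitive and total, and the injectivity of the whole key map into the product order follows.

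Finally, I would assemble the verification: transitivity is the property most easily broken by an \emph{ad hoc} cascade of cases, so rather than checking it directly across the numerous combinations of clauses, I would lean on the key-map reformulation, for which transitivity of the composite lexicographic order is immediate from transitivity of each coordinate order (the usual order on $\mathbb{Z}$ for the exponent sum and index, reverse-lex on index sequences, and lex on reversed exponent pairs). Totality follows because the codomain order is total and the key map is everywhere defined, and antisymmetry follows from injectivity of the key map, since $w<\sigma$ and $\sigma<w$ would force the key of $w$ to be both strictly below and strictly above that of $\sigma$. This reduces the proof to the two checks flagged above, namely well-definedness of the key (independence of the ignored braiding tail, which is explicit in case $(c)$) and its injectivity, and thereby establishes that $(\Lambda^{\prime},<)$ is a totally ordered set.
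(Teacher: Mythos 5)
Your proposal is correct, but it takes a genuinely different route from the paper. The paper proves the proposition head-on: it assumes $w<\sigma$ and $\sigma<v$ and runs through the cases (a)--(d) of Definition~\ref{order} one at a time to verify transitivity, then asserts that antisymmetry ``follows similarly'' and that totality holds because the definition covers all cases. You instead exhibit the relation as the pullback of a genuine linear order along an injective key map $w\mapsto\left(\sum_i k_i,\ ind(w),\ (i_1,\ldots,i_{\mu}),\ \left((|k_{\mu}|,-k_{\mu}),\ldots,(|k_1|,-k_1)\right)\right)$, with the index sequence compared at its first disagreement under the \emph{reversed} order on $\mathbb{Z}$ (so a smaller first differing index gives a larger word, matching clause ($\alpha$)) and the top-down exponent sequence compared lexicographically by the pairs $(|k|,-k)$ (matching clauses ($\beta$) and ($\gamma$)); transitivity, totality and antisymmetry are then inherited in one stroke from the composite lexicographic order. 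You also correctly isolate and resolve the one delicate point, which the paper never makes explicit: when the exponent sums and the top indices agree, neither index sequence can be a proper prefix of the other (strict monotonicity of the indices would force distinct top indices), so clause ($\alpha$) and clauses ($\beta$)--($\delta$) really do partition that case and every comparison terminates at a genuine disagreement or at equality. As for what each approach buys: the paper's direct case analysis is elementary and self-contained but long, repetitive, and easy to get wrong, and it leaves antisymmetry and totality essentially unverified; your reformulation concentrates all the work into one finite, mechanical check that the key reproduces the clauses, after which the order axioms are automatic, and it makes injectivity --- hence the fact that clause ($\delta$) is genuine equality --- transparent. The cost is the bookkeeping of variable-length keys, which you flag and handle correctly.
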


\begin{proof}
In order to show that the set  $\Lambda^{\prime}$ is totally ordered set when equipped with the ordering given in Definition~\ref{order}, we need to show that the ordering relation is antisymmetric, transitive and total. We only show that the ordering relation is transitive. Antisymmetric property follows similarly. Totality follows from Definition~\ref{order} since all possible cases have been considered.

\smallbreak

Let $w={t}^{k_0}{t^{\prime}_{1}}^{k_1}\ldots {t^{\prime}_{m}}^{k_m}$, $\sigma={t}^{\lambda_0}{t^{\prime}_{1}}^{\lambda_1} \ldots {t^{\prime}_{n}}^{\lambda_n}$ and $v=t^{\mu_0} {t^{\prime}_{1}}^{\mu_1} \ldots {t^{\prime}_{p}}^{\mu_p}$ and let $w<\sigma$ and $\sigma<v$.

\smallbreak

Since $w<\sigma$, one of the following holds:

\smallbreak

\begin{itemize}
\item[(a)] Either $\sum_{i=1}^{m}k_i<\sum_{i=1}^{n}\lambda_i$ and since $\sigma<v$, we have that $\sum_{i=1}^{n}\lambda_i\leq
\sum_{i=1}^{p}\mu_i$ and so\\

\smallbreak

\noindent $\sum_{i=1}^{m}k_i<\sum_{i=1}^{p}\mu_i$. Thus $w<v$.\\

\bigbreak

\item[(b)] Either $\sum_{i=1}^{m}k_i=\sum_{i=1}^{n}\lambda_i$ and $ind(w)=m<n=ind(\sigma)$. Then, since $\sigma<v$ we have\\

\smallbreak

\noindent that either $\sum_{i=1}^{n}\lambda_i<\sum_{i=1}^{p}\mu_i$ $\big(\textrm{same as in case (a)}\big)$ or $\sum_{i=1}^{n}\lambda_i=\sum_{i=1}^{p}\mu_i$ and \\

\smallbreak

\noindent $ind(\sigma)\leq p = ind(v)$. Thus, $ind(w)=m<p=ind(v)$ and so we conclude that $w<v$.\\

\bigbreak

\item[(c)] Either $\sum_{i=1}^{m}k_i=\sum_{i=1}^{n}\lambda_i$, $ind(w)=ind(\sigma)$ and $i_1=j_1, \ldots , i_{s-1}=j_{s-1}, i_s>j_s$. Then,\\

\smallbreak

\noindent since $\sigma<v$, we have that either:\\

\smallbreak

$\bullet$ $\sum_{i=1}^{n}\lambda_i<\sum_{i=1}^{p}\mu_i$, same as in case (a), or\\

\smallbreak

$\bullet$ $\sum_{i=1}^{n}\lambda_i=\sum_{i=1}^{p}\mu_i$ and $ind(\sigma)<ind(v)$, same as in case (b), or\\

\smallbreak

$\bullet$ $ind(\sigma)=ind(v)$ and $j_1=\varphi_1, \ldots , j_p>\varphi_p$. Then:\\

\smallbreak

\ \ \ $(i)$ if $p=s$ we have that $i_s>j_s>\varphi_s$ and we conclude that $w<v$.\\

\smallbreak

\ \ \ $(ii)$ if $p<s$ we have that $i_p=j_p>\varphi_p$ and thus $w<v$ and if $s<p$ we have that \\

\smallbreak

\ \ \ \ \ \ \ \ \ $i_s>j_s=\varphi_s$ and so $w<v$.\\

\bigbreak

\item[(d)] Either $\sum_{i=1}^{m}k_i=\sum_{i=1}^{n}\lambda_i$, $ind(w)=ind(\sigma)$, $i_n=j_n\ \forall n$ and $k_n=\lambda_n, \ldots,
    |k_q|<|\lambda_q|$.\\

\smallbreak

\noindent Then, since $\sigma<v$, we have that either:\\

\smallbreak

$\bullet$ $\sum_{i=1}^{n}\lambda_i<\sum_{i=1}^{p}\mu_i$, same as in case (a), or\\

\smallbreak

$\bullet$ $\sum_{i=1}^{n}\lambda_i=\sum_{i=1}^{p}\mu_i$ and $ind(\sigma)<ind(v)$, same as in case (b), or\\

\smallbreak

$\bullet$ $ind(\sigma)=ind(v)$ and $j_1=\varphi_1, \ldots, j_q>\varphi_q$, same as in case (c), or\\

\smallbreak

$\bullet$ $j_n=\varphi_n$, for all $n$ and $|\mu_p| \geq |\lambda_p|$ for some $p$.\\

\smallbreak

\ \ \ $(1)$ If $|\mu_p| > |\lambda_p|$, then:\\

\smallbreak

\ \ \ \ \ \ \ \ $(i)$ If $p\geq q$ then $|k_p|=|\lambda_p|<|\mu_p|$ and thus $w<v$.\\

\smallbreak

\ \ \ \ \ \ \ \ $(ii)$ If $p<q$ then $|k_q|<|\lambda_q|=|\mu_q|$ and thus $w<v$.\\

\smallbreak

\ \ \ $(2)$ If $|\mu_p| = |\lambda_p|$, then:\\

\smallbreak

\ \ \ \ \ \ \ \ $(i)$ If $p\geq q$ then $|k_p|=|\lambda_p|=|\mu_p|$ and $k_p=\lambda_p>\mu_p$. Thus $w<v$.\\

\smallbreak

\ \ \ \ \ \ \ \ $(ii)$ If $p<q$ then $|k_q|<|\lambda_q|=|\mu_q|$ and thus $w<v$.

\end{itemize}

\bigbreak

\noindent So, we conclude that the ordering relation is transitive.

\end{proof}

\begin{defn} \label{level}
\rm
We define the subset of \textit{level $k$}, $\Lambda_k$, of $\Lambda$ to be the set
$$\Lambda_k:=\{t^{k_0}t_1^{k_1}\ldots t_{m}^{k_m} | \sum_{i=0}^{m}{k_i}=k \}$$
and similarly, the subset of \textit{level $k$} of $\Lambda^{\prime}$ to be
$$\Lambda^{\prime}_k:=\{t^{k_0}{t^{\prime}_1}^{k_1}\ldots {t^{\prime}_{m}}^{k_m} | \sum_{i=0}^{m}{k_i}=k \}.$$
\end{defn}

\begin{remark} \label{ordgap} \rm
Let $w \in \Lambda_k$ a monomial containing gaps in the indices and $u \in \Lambda_k$ a monomial with consecutive indices such that $ind(w)=ind(u)$. Then,
it follows from Definition~\ref{order} that $w<u$.
\end{remark}

\begin{prop}
The sets $\Lambda_k$ are totally ordered and well-ordered for all $k$.
\end{prop}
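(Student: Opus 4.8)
The plan is to separate the two assertions: the total ordering is essentially free, while the well-ordering is the real content. For the total ordering, observe that $\Lambda_k \subseteq \Lambda$ and that the ordering used on $\Lambda_k$ is by definition the restriction of the ordering on $\Lambda$. Since $\Lambda$ is totally ordered (the preceding proposition applies verbatim to $\Lambda$, the only change being $t_i^{\prime}\leadsto t_i$), antisymmetry, transitivity and totality are inherited by the subset $\Lambda_k$ with no extra work.

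For well-ordering I would first pin down the concrete shape of the elements. By the definition of $\Lambda$, every $w\in\Lambda_k$ is a monomial $t^{k_0}t_1^{k_1}\cdots t_n^{k_n}$ with \emph{consecutive} indices $0,1,\ldots,n$, all exponents $k_i\in\mathbb{Z}\setminus\{0\}$, and $\sum_{i=0}^n k_i=k$; thus $w$ is encoded by the pair $\big(n,(k_0,\ldots,k_n)\big)$. Because all elements of $\Lambda_k$ share the level $k$, case (a) of Definition~\ref{order} never distinguishes two of them, so every comparison falls under case (b). Reading case (b): two elements are compared first by index $n$ (smaller index is smaller, by (b)(i)); and when the indices agree the index sequences automatically coincide ($i_t=j_t=t$, so case ($\alpha$) is vacuous), whence the comparison reduces, by cases ($\beta$)–($\gamma$)–($\delta$), to comparing the exponent tuples \emph{from the top index downward}. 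The single-exponent rule extracted from ($\beta$)–($\gamma$) is the relation on $\mathbb{Z}\setminus\{0\}$ defined by $a\prec b$ iff $|a|<|b|$, or $|a|=|b|$ and $a>b$. This enumerates $\mathbb{Z}\setminus\{0\}$ as $1\prec-1\prec2\prec-2\prec\cdots$, a well-ordering of order type $\omega$: any nonempty subset contains an element of least absolute value, and of the at most two such the positive one is $\prec$-least.

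With this description the well-ordering follows by the standard structure of a lexicographic product summed over a well-ordered index. I would give it directly by exhibiting the least element of an arbitrary nonempty $S\subseteq\Lambda_k$: let $N$ be the least index occurring among members of $S$ (this exists, being the minimum of a nonempty subset of $\mathbb{N}$), pass to the nonempty subset of index-$N$ elements of $S$, and then successively minimize the top exponent $k_N$, then $k_{N-1}$, down to $k_0$, each time with respect to $\prec$. Every such minimization is over a nonempty set and $\prec$ is a well-order, so a unique least element of $S$ results; by the reduction of the previous paragraph this element is $<$-minimal in $S$. Equivalently, and this is the phrasing I would likely record, no infinite strictly descending chain $w^{(1)}>w^{(2)}>\cdots$ can exist: the indices $ind(w^{(j)})$ are non-increasing in $\mathbb{N}$ by (b)(i) hence eventually constant $=N$, and from that point the top-down lexicographic comparison in $(\mathbb{Z}\setminus\{0\})^{N+1}$ forces each coordinate, starting from $k_N$, to be $\prec$-non-increasing and hence eventually constant, so after $N+1$ stabilizations the tuple is eventually constant, contradicting strict descent.

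The only delicate points are bookkeeping ones, and I expect them to be the main obstacle to a clean write-up rather than any deep difficulty. One must check carefully that at fixed level and fixed index the index sequences really do coincide, so that case ($\alpha$) genuinely drops out and the order collapses to a lexicographic comparison of exponents; and one must get the \emph{direction} right, namely that the comparison runs from the highest index down and that, at equal absolute value, it is the \emph{positive} exponent that is the smaller (case ($\gamma$): $k_i>\lambda_i\Rightarrow w<\sigma$), which is exactly what makes $\prec$ have order type $\omega$. Once $\prec$ is recognized as a well-order, the well-ordering of $\Lambda_k$ is forced, so no further analytic input is required.
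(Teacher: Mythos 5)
Your proposal is correct, and it does substantially more than the paper's own proof. The paper handles total ordering exactly as you do --- $\Lambda_k$ inherits it as a subset of $\Lambda$ --- but for well-ordering it only observes that $t^k$ is the minimum element of $\Lambda_k$ and concludes from this that $\Lambda_k$ is well-ordered. That is of course a necessary condition, not a sufficient one: well-ordering requires every nonempty subset to have a least element, and the existence of a global minimum does not give that. Your argument supplies the missing content. The key structural observations --- that at fixed level case (a) of Definition~\ref{order} never applies, that at fixed index the index sequences of two elements of $\Lambda$ coincide (so case ($\alpha$) is vacuous), and that the comparison therefore collapses to a top-down lexicographic comparison of exponent tuples in the order $1\prec -1\prec 2\prec -2\prec\cdots$ on $\mathbb{Z}\setminus\{0\}$ --- identify $\Lambda_k$ as an $\mathbb{N}$-indexed ordered sum of finite lexicographic powers of a well-order of type $\omega$, from which well-ordering follows either by your explicit minimization of an arbitrary nonempty subset or by your descending-chain argument. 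Both closing arguments are sound (indices non-increasing hence eventually constant; then coordinatewise stabilization from the top index down). In short: your route agrees with the paper on the trivial half and replaces the paper's one-line assertion on the substantive half with an actual proof; the only cost is the bookkeeping you already flagged, namely verifying the direction of the order in cases ($\beta$)--($\gamma$), which you got right.
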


\begin{proof}
Since $\Lambda_k \subseteq \Lambda,\ \forall k$, $\Lambda_k$ inherits the property of being a totally ordered set from $\Lambda$. Moreover, $t^k$ is the minimum element of $\Lambda_k$ and so $\Lambda_k$ is a well-ordered set.
\end{proof}

We also introduce the notion of \textit{homologous words} as follows:

\begin{defn} \rm
We shall say that two words $w^{\prime}\in \Lambda^{\prime}$ and $w\in \Lambda$ are {\it homologous}, denoted $w^{\prime}\sim w$, if $w$ is
obtained from $w^{\prime}$ by turning $t^{\prime}_i$ into $t_i$ for all $i$.
\end{defn}

With the above notion the proof of Theorem~\ref{mainthm} is based on the following idea: Every element $w^{\prime}\in \Lambda^{\prime}$ can be expressed as linear combinations of monomials $w_i\in \Lambda$ with coefficients in $\mathbb{C}$, such that:

\begin{itemize}
\item[(i)] $\exists \ j$ such that $w_j:=w\sim w^{\prime}$,
\item[(ii)] $w_j < w_i$, for all $i \neq j$,
\item[(iii)] the coefficient of $w_j$ is an invertible element in $\mathbb{C}$.
\end{itemize}

\section{From $\Lambda^{\prime}$ to $\Sigma_n$}

In this section we prove a series of lemmas relating elements of the two different basic sets $\Sigma_n$, $\Sigma^{\prime}_n$ of $\textrm{H}_{1,n}(q)$. In the proofs we underline expressions which are crucial for the next step. Since $\Lambda^{\prime}$ is a subset of $\Sigma^{\prime}_n$, all lemmas proved here apply also to $\Lambda^{\prime}$ and will be used in the context of the bases of $\mathcal{S}({\rm ST})$.

\subsection{Some useful lemmas in $\textrm{H}_{1,n}(q)$}

We will need the following results from \cite{La2}. The first lemma gives some basic relations of the braiding generators.

\begin{lemma}[Lemma~1 \cite{La2}] \label{brrel}
For $\epsilon \in \{ \pm 1 \}$ the following hold in $\textrm{H}_{1,n}(q)$:

\noindent \textit{(i)}\ $ g_i^m \ \ \ = \ \left(q^{m-1}-q^{m-2}+ \ldots +(-1)^{m-1} \right)g_i + \left(q^{m-1}-q^{m-2}+ \cdots +(-1)^{m-2}q \right)$\\

\noindent \ \ \ \ \ $ g_i^{-m} \ = \ \left(q^{-m}-q^{1-m}+ \ldots + (-1)^{m-1}q^{-1} \right)g_i + \left(q^{-m}-q^{1-m} + \cdots +(-1)^{m-1}q^{-1}+(-1)^{m} \right)$\\

\noindent \textit{(ii)} $ {g_i}^\epsilon ({g_k}^{\pm 1}g_{k-1}^{\pm 1}\ldots {g_j}^{\pm 1}) \ = \ ({g_k}^{\pm 1}g_{k-1}^{\pm 1}\ldots {g_j}^{\pm 1}){g_{i+1}}^\epsilon, \ \mbox{ \rm for} \ k>i\geq j$, \\

\noindent \ \ \ \ \ ${g_i}^\epsilon ({g_j}^{\pm 1}g_{j+1}^{\pm 1}\ldots {g_k}^{\pm 1})\ = \ ({g_j}^{\pm 1}g_{j+1}^{\pm 1}\ldots {g_k}^{\pm1}){g_{i-1}}^\epsilon, \ \mbox{ \rm for} \ k\geq i> j$,\\

\noindent where the sign of the ${\pm 1}$ exponent is the same for all generators.\\

\noindent  \textit{(iii)} $ g_ig_{i-1}\ldots g_{j+1}{g_j} g_{j+1}\ldots g_i \ \ \ \ \ =\ g_jg_{j+1}\ldots g_{i-1}{g_i} g_{i-1}\ldots g_{j+1}{g_j} $\\

\noindent \ \ \ \ \ \  ${g_i}^{- 1}g_{i-1}^{- 1}\ldots g_{j+1}^{- 1}{g_j}^\epsilon g_{j+1}\ldots g_i \ = \ g_jg_{j+1}\ldots g_{i-1}{g_i}^\epsilon g_{i-1}^{- 1}\ldots g_{j+1}^{-1}{g_j}^{- 1} $\\

\noindent \textit{(iv)} $ {g_i}^\epsilon \ldots {g_{n-1}}^\epsilon {g_n}^{2\epsilon} {g_{n-1}}^\epsilon\ldots {g_i}^\epsilon\ \ \ = \  \sum_{r=0}^{n-i+1} \, (q^\epsilon -1)^{\epsilon_r} q^{\epsilon r} \, ({g_i}^\epsilon \ldots {g_{n-r}}^\epsilon \ldots {g_i}^\epsilon)$,\\

\noindent where $\epsilon_r=1 \ \mbox{ if } \ r\leq n-i \ \mbox{ and } \ \epsilon_{n-i+1}=0.$ Similarly,\\

\noindent \textit{(v)} $ {g_i}^\epsilon \ldots {g_2}^\epsilon {g_1}^{2\epsilon} {g_2}^\epsilon \ldots {g_i}^\epsilon \ = \ \sum_{r=0}^{i}\, (q^\epsilon -1)^{\epsilon_r} q^{\epsilon r} \, ({g_i}^\epsilon \ldots{g_{r+2}}^\epsilon {g_{r+1}}^\epsilon {g_{r+2}}^\epsilon \ldots {g_i}^\epsilon),$\\

\noindent where $\epsilon_r=1 \ \mbox{ if } \ r\leq i-1 \ \mbox{ and } \ \epsilon_i=0$.
\end{lemma}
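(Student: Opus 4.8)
The whole lemma is a consequence of just two families of defining relations in $\textrm{H}_{1,n}(q)$: the braid relations together with the quadratic relation $g_i^2=(q-1)g_i+q$. My plan is therefore to treat each part by a short induction, feeding the earlier parts into the later ones. I would begin with part (i) by induction on $m$. The base case $m=2$ is exactly the quadratic relation; for the inductive step I write $g_i^{m}=g_i\cdot g_i^{m-1}$, substitute the inductive expression $g_i^{m-1}=a_{m-1}g_i+b_{m-1}$, and collapse the resulting $g_i^2$ via the quadratic relation, so that matching coefficients of $g_i$ and of $1$ produces the stated alternating-sign polynomials. For the negative powers I first solve the quadratic relation for the inverse, obtaining $g_i^{-1}=q^{-1}g_i+(q^{-1}-1)$, and then run the same induction in the downward direction.

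The shifting identities of part (ii) I would handle by a direct computation rather than an induction. In the descending case the generator $g_i^{\epsilon}$ first commutes freely past the high-index block $g_k^{\pm1}\cdots g_{i+2}^{\pm1}$ by far commutativity $g_ag_b=g_bg_a$ for $|a-b|>1$; it then meets the pair $g_{i+1}^{\pm1}g_i^{\pm1}$, where a single application of the braid relation (in the appropriately signed form) turns $g_i^{\epsilon}g_{i+1}^{\pm1}g_i^{\pm1}$ into $g_{i+1}^{\pm1}g_i^{\pm1}g_{i+1}^{\epsilon}$, shifting the index by one; finally the trailing $g_{i+1}^{\epsilon}$ commutes past the low-index block $g_{i-1}^{\pm1}\cdots g_j^{\pm1}$. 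The ascending case is symmetric, with the index shifting down to $i-1$. Part (iii), the palindrome-reversal identities, then follows by induction on $i-j$: the base case $i-j=1$ is the braid relation itself, and in the inductive step I peel off the outermost generators and apply (ii) together with one braid move to reduce the length-$(i-j)$ palindrome to the length-$(i-j-1)$ one.

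The substantial part of the work is in (iv) and (v), and I expect the index and coefficient bookkeeping there to be the main obstacle. The idea is to induct on $n-i$ (respectively on $i$), using the quadratic relation on the innermost squared generator to split $g_n^{2\epsilon}$ into a $g_n^{\epsilon}$-term and a scalar term. The scalar term contributes the factor $(q^{\epsilon}-1)^{\epsilon_r}q^{\epsilon r}$ and leaves a strictly shorter palindrome, while the $g_n^{\epsilon}$-term, after regrouping the generators by means of (ii) and (iii), reproduces a palindrome of the same shape but one step shorter, to which the inductive hypothesis applies; the convention $\epsilon_{n-i+1}=0$ is exactly what records the final, fully contracted summand of the telescope. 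The delicate point, and where I would concentrate most of the care, is verifying that the two sign regimes $\epsilon=+1$ and $\epsilon=-1$ can be carried through uniformly and that the resulting telescoping of indices matches the stated range of summation precisely.
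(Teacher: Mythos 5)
You should first be aware that this paper contains no proof of this statement at all: it is imported verbatim as Lemma~1 of \cite{La2}, so there is no in-paper argument to compare yours against, and what follows assesses your proposal on its own terms. Your treatment of parts (i)--(iii) is correct and is the standard argument: (i) is the evident induction, closing each step with the quadratic relation (and with $g_i^{-1}=q^{-1}g_i+(q^{-1}-1)$ driving the negative powers); (ii) is far-commutativity plus a single application of the (appropriately signed) braid relation; and (iii) follows by induction, since after invoking the inductive hypothesis on the inner palindrome the two outermost generators commute past everything of index at most $i-2$ and meet in a single braid move. These three parts use only the braid and quadratic relations, exactly as you say.

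In parts (iv) and (v), however, your description assigns the two terms produced by the quadratic relation to the wrong roles, and as written the inductive step cannot be executed. Splitting $g_n^{2\epsilon}=(q^\epsilon-1)g_n^\epsilon+q^\epsilon$ inside the palindrome gives, first, the term $(q^\epsilon-1)\,g_i^\epsilon\cdots g_{n-1}^\epsilon g_n^\epsilon g_{n-1}^\epsilon\cdots g_i^\epsilon$: this contains no squared generator, it is already the $r=0$ summand of the right-hand side, it needs no regrouping via (ii)--(iii), and the inductive hypothesis cannot be applied to it, because the statement being inducted upon concerns palindromes with a squared middle. Second, it gives the term $q^\epsilon\, g_i^\epsilon\cdots g_{n-2}^\epsilon g_{n-1}^{2\epsilon} g_{n-2}^\epsilon\cdots g_i^\epsilon$, where the two adjacent copies of $g_{n-1}^\epsilon$ merge into the new squared middle; this is the strictly shorter palindrome of the same shape, and it is here---on the scalar term, not on the $g_n^\epsilon$-term---that the inductive hypothesis applies, the prefactor $q^\epsilon$ shifting every coefficient $q^{\epsilon r}$ to $q^{\epsilon(r+1)}$ and producing exactly the summands $r=1,\dots,n-i+1$, with the convention $\epsilon_{n-i+1}=0$ recording the last, fully contracted term. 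So your sentence claiming that the $g_n^\epsilon$-term ``reproduces a palindrome of the same shape but one step shorter, to which the inductive hypothesis applies'' is exactly backwards. The repair is a one-line interchange of the two roles, after which the telescoping goes through uniformly in $\epsilon\in\{\pm1\}$, since the quadratic relation for $g_i^{-1}$ has the same shape with $q^{-1}$ in place of $q$.
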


The next lemma comprises relations between the braiding generators and the looping generator $t$.

\begin{lemma}[cf. Lemmas~1, 4, 5 \cite{La2}] \label{brlre1}
For $\epsilon \in \{ \pm 1 \}$, $i, k\in \mathbb{N}$ and $\lambda \in \mathbb{Z}$ the following hold in ${\rm H}_{1,n}(q)$:
\[
\begin{array}{llll}
(i) & t^{\lambda} g_1tg_1 & = & g_1tg_1 t^{\lambda}  \\
&&&\\
(ii) & t^\epsilon {g_1}^\epsilon t^{\epsilon k}{g_1}^\epsilon &  =  & {g_1}^\epsilon t^{\epsilon k} {g_1}^\epsilon t^\epsilon  +  (q^\epsilon -1) t^\epsilon {g_1}^\epsilon t^{\epsilon k} + (1-q^\epsilon) t^{\epsilon k}{g_1}^\epsilon t^\epsilon \\
&&&\\
& t^{-\epsilon} {g_1}^\epsilon t^{\epsilon k}{g_1}^\epsilon & = & {g_1}^\epsilon t^{\epsilon k}
{g_1}^\epsilon t^{-\epsilon}  +  (q^\epsilon -1) t^{\epsilon (k-1)} {g_1}^\epsilon +  (1-q^\epsilon) {g_1}^\epsilon t^{\epsilon (k-1)}\\
&&&\\
(iii) & t^{\epsilon i}{g_1}^{\epsilon} t^{\epsilon k}{g_1}^{\epsilon} & = & g_1^{\epsilon} t^{\epsilon k} g_1^{\epsilon} t^{\epsilon i} + (q^{\epsilon} -1)
\sum_{j=1}^{i}{t^{\epsilon j} g_1^{\epsilon} t^{\epsilon (k+i-j)}} + (1-q^{\epsilon}) \sum_{j=0}^{i-1}{t^{\epsilon (k+j)} g_1^{\epsilon} t^{\epsilon (i-j)}} \\
&&&\\
& t^{-\epsilon i}{g_1}^\epsilon t^{\epsilon k}{g_1}^\epsilon & = & {g_1}^\epsilon t^{\epsilon k}{g_1}^\epsilon t^{-\epsilon i} + (q^\epsilon -1)
\sum_{j=1}^{i}{t^{\epsilon (k-j)} g_1^{\epsilon} t^{-\epsilon (i-j)}} + (1-q^{\epsilon}) \sum_{j=1}^{i}{t^{\epsilon (i-j)} g_1^{\epsilon} t^{\epsilon (k-j)}}  \\
\end{array}
\]
\end{lemma}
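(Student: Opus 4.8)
The plan is to prove the three parts in order of increasing difficulty, deriving each from the previous one together with the two defining relations that couple $t$ and $g_1$: the braid relation $g_1 t g_1 t = t g_1 t g_1$ and the quadratic relation~(\ref{quad}) (equivalently Lemma~\ref{brrel}(i), which rewrites $g_1^{-1}$ as a linear combination of $g_1$ and $1$). Part (i) is immediate: the braid relation says precisely that $g_1 t g_1$ commutes with $t$, and since $t$ is invertible in $\textrm{H}_{1,n}(q)$, a trivial induction on $|\lambda|$ extends this to $t^\lambda (g_1 t g_1) = (g_1 t g_1) t^\lambda$ for every $\lambda \in \mathbb{Z}$, which is the claim.

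Part (ii) is the heart of the lemma, and I would prove the first identity by induction on $k$, uniformly in $\epsilon \in \{\pm 1\}$. The base cases are degenerate: for $k=1$ the two correction terms cancel and the identity reduces to the braid relation, while for $k=0$ it reduces, after applying $g_1^2 = (q-1)g_1 + q$, to an elementary computation. For the inductive step I would pass from exponent $k$ to $k+1$ by rewriting a leftmost block $t^\epsilon g_1^\epsilon t^\epsilon$ via the braid relation; this produces a mixed conjugate of the shape $g_1^{-\epsilon} t^{?} g_1^{\epsilon}$, which I resolve using Lemma~\ref{brrel}(i) together with the commutation supplied by part (i). The second identity, with leading power $t^{-\epsilon}$, is handled by the same induction; alternatively it follows from the first by left-multiplication by $t^{-2\epsilon}$ and re-collecting terms.

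Part (iii) comprises the general pushing relations, and I would obtain them from (ii) by induction on $i$, the base case $i=1$ being exactly (ii). For the first identity, write $t^{\epsilon(i+1)} g_1^\epsilon t^{\epsilon k} g_1^\epsilon = t^{\epsilon i}\bigl(t^\epsilon g_1^\epsilon t^{\epsilon k} g_1^\epsilon\bigr)$, apply (ii) to the inner factor, apply the induction hypothesis to the resulting leading term $t^{\epsilon i} g_1^\epsilon t^{\epsilon k} g_1^\epsilon$, and multiply the whole expansion on the right by $t^\epsilon$. On collecting terms, the two isolated contributions coming from the correction terms of (ii) turn out to be exactly the missing endpoints (the $j=i+1$ term of the first sum and the $j=i$ term of the second), so that after a shift of the summation index everything recombines into the claimed formula; I have checked that this bookkeeping closes. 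The second identity is proved by the same peeling, but applying the induction hypothesis to the inner block $t^{-\epsilon i} g_1^\epsilon t^{\epsilon k} g_1^\epsilon$ first and then (ii) to the outer $t^{-\epsilon}$. Here one must track with care the signs of the $t$-exponents appearing to the left of $g_1^\epsilon$ in the correction sums, since an off-by-one in these signs is the easiest error to make; a convenient consistency check is that whenever the middle exponent is small the two families of correction terms must cancel, reducing (iii) to the pure commutation of part (i).

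The genuinely delicate step is establishing (ii). Because $t$ and $g_1$ obey only the length-four braid relation and no length-two commutation, a single power of $t$ cannot be moved past $g_1$ directly; it must be pushed through the entire block $g_1^\epsilon t^{\epsilon k} g_1^\epsilon$, and each such move unavoidably generates the two geometric families of correction terms governed by $q^\epsilon-1$ and $1-q^\epsilon$. Controlling these terms through the induction on $k$, simultaneously for both signs of $\epsilon$ and for both signs of the leading $t$-exponent, while repeatedly reducing the mixed conjugates by means of the quadratic relation, is where essentially all of the work lies. Once (ii) is in hand, part (iii) is a formal induction requiring only the re-indexing of sums described above.
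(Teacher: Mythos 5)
Your proposal cannot be compared against a proof in the paper, because the paper offers none: this lemma is imported verbatim from \cite{La2} (Lemmas 1, 4, 5) as background material. Judged on its own, most of your plan is sound. Part (i) is indeed the braid relation plus invertibility of $t$. Your step for the first identity of (ii) genuinely works: writing $t^{\epsilon}g_1^{\epsilon}t^{\epsilon(k+1)}g_1^{\epsilon}=(t^{\epsilon}g_1^{\epsilon}t^{\epsilon})\,t^{\epsilon k}g_1^{\epsilon}$, rewriting the leftmost block by the braid relation, commuting $g_1^{\epsilon}t^{\epsilon}g_1^{\epsilon}$ past $t^{\epsilon k}$ by (i), and expanding $g_1^{-\epsilon}$ and $g_1^{2\epsilon}$ by the quadratic relation lands exactly on the claimed right-hand side; in fact the induction hypothesis is never used, so this is a direct computation valid for all $k$ at once. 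Your bookkeeping for the first identity of (iii) also closes: the two stray terms are precisely the endpoints $j=i+1$ and $j=i$ of the two sums. One repair: your alternative derivation of the second identity of (ii) by left-multiplying the first by $t^{-2\epsilon}$ does not work (it creates the uncontrolled word $t^{-2\epsilon}g_1^{\epsilon}t^{\epsilon k}g_1^{\epsilon}t^{\epsilon}$); what does work is multiplying the first identity by $t^{-\epsilon}$ on \emph{both} sides, since $t^{-\epsilon}(t^{\epsilon}g_1^{\epsilon}t^{\epsilon k}g_1^{\epsilon})t^{-\epsilon}=g_1^{\epsilon}t^{\epsilon k}g_1^{\epsilon}t^{-\epsilon}$.

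The genuine gap is the second identity of (iii) --- exactly the spot where you warned that the signs of the left exponents are ``the easiest error to make'' and then did not do the check (your ``I have checked that this bookkeeping closes'' covers only the first identity). Your peeling multiplies every correction term of the rank-$i$ formula by $t^{-\epsilon}$ on the left, so the left exponents in the sums \emph{decrease} by one at each step; but in the formula as printed, the left exponents $t^{\epsilon(i-j)}$ of the second sum must \emph{increase} by one when $i$ is replaced by $i+1$ at fixed $j$ (and the right exponents $t^{\epsilon(k-j)}$ pin down $j$, so no reindexing can hide this). Hence your induction cannot close on the printed formula. In fact the printed formula is false: for $\epsilon=1$, $i=2$, $k=0$ it reduces, after applying $g_1^2=(q-1)g_1+q$, to $(q-1)\,t^{-1}g_1t^{-1}=(q-1)\,t\,g_1t^{-1}$, which fails in ${\rm H}_{1,n}(q)$. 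What your peeling actually proves --- and what is the correct identity, presumably the intended statement of \cite{La2} --- is
\[
t^{-\epsilon i}g_1^{\epsilon}t^{\epsilon k}g_1^{\epsilon}
= g_1^{\epsilon}t^{\epsilon k}g_1^{\epsilon}t^{-\epsilon i}
+(q^{\epsilon}-1)\sum_{j=1}^{i}t^{\epsilon(k-j)}g_1^{\epsilon}t^{-\epsilon(i-j)}
+(1-q^{\epsilon})\sum_{j=1}^{i}t^{-\epsilon(i-j)}g_1^{\epsilon}t^{\epsilon(k-j)},
\]
with a \emph{negative} exponent on the left factor of the last sum; with this correction your induction closes exactly as in the first identity. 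So the method is the right one, but as a proof of the statement as printed it must fail, and an honest execution of the step you skipped would have revealed a sign typo in the statement rather than completed its proof.
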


The next lemma gives the interactions of the braiding generators and the loopings $t_i$s and $t^{\prime}_i$s.

\begin{lemma}[Lemmas~1 and 2 \cite{La2}] \label{brlre2}
The following relations hold in $\textrm{H}_{1,n}(q)$:

\[
\begin{array}{llll}
(i) & g_i{t_k}^\epsilon & = & {t_k}^\epsilon g_i \ \mbox{ \rm for}  \ k>i, \, k<i-1 \\
&&&\\
    & g_it_i & = & q t_{i-1}g_i+(q-1) t_i \\
&&&\\
    & g_it_{i-1} & = & q^{-1} t_ig_i+(q^{-1}-1) t_i \ =\ t_ig_i^{-1} \\
&&&\\
    & g_i{t^{-1}_{i-1}} & = & q {t_i}^{-1}g_i+(q-1) {t_{i-1}}^{-1} \\
&&&\\
    & g_i{t^{-1}_i} & = & q^{-1} {t_{i-1}}^{-1}g_i+(q^{-1}-1) {t_{i-1}}^{-1}\ = \ t_{i-1}^{-1}g_i^{-1}\\
&&&\\
(ii) & t_n^kg_n & = & (q-1)\sum_{j=0}^{k-1}{q^jt_{n-1}^jt_n^{k-j}}+q^kg_nt_{n-1}^k, \mbox{ \rm if}  \ k\in \mathbb{N} \\
&&&\\
     & t_n^kg_n & = & (1-q)\sum_{j=0}^{k-1}{q^jt_{n-1}^jt_n^{k-j}}+q^kg_nt_{n-1}^k, \mbox{ \rm if}  \ k\in \mathbb{Z} - \mathbb{N} \\
&&&\\
(iii) & {t_i}^k{t_j}^\lambda & = & {t_j}^\lambda {t_i}^k\ \mbox{ \rm for}  \ i\neq j \ \mbox{ \rm and}  \ k, \lambda\in {\mathbb Z} \\
&&&\\
(iv)  & g_i{t^{\prime}_k}^\epsilon & = & {t^{\prime}_k}^\epsilon g_i \ \ \mbox{ \rm for}  \ k>i, \ k<i-1 \\
&&&\\
      & g_i{t^{\prime}_i}^\epsilon & = & {t^{\prime}_{i-1}}^\epsilon g_i + (q-1) {t^{\prime}_i}^\epsilon + (1-q) {t^{\prime}_{i-1}}^\epsilon \\
&&&\\
      & g_i{t^{\prime}_{i-1}}^\epsilon & = & {t^{\prime}_i}^\epsilon g_i  \\
&&&\\
(v) & {t_i^{\prime}}^k & = & g_i\ldots g_1 t^k {g_1}^{-1}\ldots {g_i}^{-1} \ \mbox{ \rm for}  \ k \in {\mathbb Z}. \\
\end{array}
\]
\end{lemma}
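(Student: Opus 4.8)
The plan is to derive every relation from the defining relations of $\textrm{H}_{1,n}(q)$ together with two elementary consequences that I would record at the outset, since almost everything reduces to a single short manipulation with them. First, the quadratic relation $g_i^2=(q-1)g_i+q$ rearranges to $g_i^{-1}=q^{-1}g_i+(q^{-1}-1)$. Second, the defining products in Eq.~\ref{lgen} telescope, giving $t_i=g_it_{i-1}g_i$ and $t^{\prime}_i=g_it^{\prime}_{i-1}g_i^{-1}$, with $t_0=t^{\prime}_0=t$.

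Relation (v) is immediate: since $t^{\prime}_i=C\,t\,C^{-1}$ with $C=g_ig_{i-1}\cdots g_1$, raising to the $k$-th power telescopes to $C\,t^k\,C^{-1}=g_i\cdots g_1 t^k g_1^{-1}\cdots g_i^{-1}$. For relations (i) and (iv) I would split into the commuting and the interacting cases. In the commuting cases ($k>i$ or $k<i-1$) the generator $g_i$ slides through the defining word of $t_k^{\pm1}$ or ${t^{\prime}_k}^{\pm1}$ using the braid relations and the fact that $g_i$ commutes with $t$ and with every far-apart generator $g_j$, $|i-j|>1$. In the interacting cases one substitutes the telescoping identities: for instance $g_it_i=g_i^2t_{i-1}g_i=((q-1)g_i+q)t_{i-1}g_i=(q-1)t_i+qt_{i-1}g_i$, while $g_it_{i-1}=t_ig_i^{-1}$ follows by cancelling a single $g_i$; the remaining cases (including the $t^{\prime}$ versions) are obtained identically, inserting $g_i^{-1}=q^{-1}g_i+(q^{-1}-1)$ wherever a bare $g_i$ is wanted on the right.

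The commutativity relation (iii) is the heart of the lemma and the step I expect to be the main obstacle. Once the case $k=\lambda=1$, namely $t_it_j=t_jt_i$, is established, the statement for arbitrary powers is automatic. I would prove $t_it_j=t_jt_i$ (taking $i<j$) by induction on $j-i$. For $j\geq i+2$ the generator $g_j$ in $t_j=g_jt_{j-1}g_j$ commutes with $t_i$ by the commuting case of (i), so $t_it_j=g_j(t_it_{j-1})g_j=g_j(t_{j-1}t_i)g_j=t_jt_i$ by the inductive hypothesis. This reduces everything to the adjacent case $t_it_{i+1}=t_{i+1}t_i$. Writing $t_{i+1}=g_{i+1}t_ig_{i+1}$, this case is equivalent to the ``type B'' braid identity $t_i g_{i+1} t_i g_{i+1}=g_{i+1}t_ig_{i+1}t_i$, which I would prove by a \emph{separate} induction on $i$: the base case $i=0$ is exactly the defining relation $g_1tg_1t=tg_1tg_1$, and the inductive step substitutes $t_i=g_it_{i-1}g_i$ and pushes the inner $g_{i+1}$'s through using $g_ig_{i+1}g_i=g_{i+1}g_ig_{i+1}$ and $t_{i-1}g_{i+1}=g_{i+1}t_{i-1}$ until the hypothesis $t_{i-1}g_it_{i-1}g_i=g_it_{i-1}g_it_{i-1}$ can be applied to the middle four letters.

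Finally, relation (ii) follows by induction on $k$. The base case $t_ng_n=(q-1)t_n+qg_nt_{n-1}$ comes from $t_n=g_nt_{n-1}g_n$ and the quadratic relation, and the inductive step multiplies by $t_n$ on the left and commutes it past the $t_{n-1}$-factors using (iii) together with $t_ng_nt_{n-1}^k=(q-1)t_{n-1}^kt_n+qg_nt_{n-1}^{k+1}$; the case $k\in\mathbb{Z}-\mathbb{N}$ is handled symmetrically, starting from $g_nt_n^{-1}$. Throughout, the only genuinely delicate point is the propagating braid identity underlying (iii); everything else is bookkeeping with the two opening identities.
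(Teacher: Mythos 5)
Your proposal is correct, but note that the paper itself offers no proof of this lemma at all: it is imported wholesale from [La2] (Lemmas~1 and~2 there), so your derivation is a self-contained reconstruction rather than a parallel of anything in this text. Your reconstruction is sound. The two opening identities ($g_i^{-1}=q^{-1}g_i+(q^{-1}-1)$ and the telescoping $t_i=g_it_{i-1}g_i$, $t^{\prime}_i=g_it^{\prime}_{i-1}g_i^{-1}$) do reduce (v), the interacting cases of (i) and (iv), and the base case of (ii) to one-line manipulations, and I verified that each of the stated coefficients comes out right (e.g.\ $g_it_{i-1}^{-1}=qt_i^{-1}g_i+(q-1)t_{i-1}^{-1}$ follows from $t_i^{-1}g_i=g_i^{-1}t_{i-1}^{-1}$ and $qg_i^{-1}=g_i+(1-q)$). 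You also correctly identify (iii) as the only point with real content: the reduction of $t_it_j=t_jt_i$ to the adjacent case by induction on $j-i$ (using the commuting case of (i)), and then the adjacent case to the propagated type-B relation $t_ig_{i+1}t_ig_{i+1}=g_{i+1}t_ig_{i+1}t_i$ by induction on $i$ with the defining relation $tg_1tg_1=g_1tg_1t$ as base, both go through --- the inductive step works exactly as you sketch, shuttling $g_{i+1}$ past $t_{i-1}$ and applying the braid relation twice on each side of the middle block. Two small points deserve attention if you write this up fully: the ``sliding'' argument for $g_it_k^{\pm1}=t_k^{\pm1}g_i$ with $k>i$ is not pure commutation --- the braid relation converts your $g_i$ into a $g_{i+1}$ that must travel through the core of the word and convert back on the far side (this is exactly Lemma~\ref{brrel}(ii) of the paper); and in (ii) the summation $\sum_{j=0}^{k-1}$ for $k\in\mathbb{Z}-\mathbb{N}$ needs to be given a precise meaning (the index effectively runs over negative $j$), which your ``symmetric'' induction starting from $t_n^{-1}g_n=q^{-1}g_nt_{n-1}^{-1}+(q^{-1}-1)t_{n-1}^{-1}$ will produce automatically, but you should state the resulting indexing explicitly rather than mirror the paper's ambiguous notation.
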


Using now Lemmas~\ref{brrel}, \ref{brlre1} and \ref{brlre2} we prove the following relations, which we will use for converting elements in $\Lambda^{\prime}$ to elements in $\Sigma_n$. Note that whenever a generator is overlined, this means that the specific generator is omitted from the word.

\begin{lemma} \label{brlre3}
The following relations hold in ${\rm H}_{1,n}(q)$ for $k \in \mathbb{N}$:

\[
\begin{array}{llll}
(i) & g_{m+1}t_m^k & = & q^{-(k-1)}t_{m+1}^k g_{m+1}^{-1}\ +\ \sum_{j=1}^{k-1}{q^{-(k-1-j)}(q^{-1}-1)t_m^j t_{m+1}^{k-j}},\\
&&&\\
(ii) & g_{m+1}^{-1}t_m^{-k} & = & q^{(k-1)}t_{m+1}^{-k} g_{m+1}\ +\ \sum_{j=1}^{k-1}{q^{(k-1-j)}(q-1)t_m^{-j} t_{m+1}^{-(k-j)}}.
\end{array}
\]
\end{lemma}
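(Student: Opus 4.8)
The plan is to prove identity (i) by induction on $k$ and then to deduce (ii) from (i) by a natural parameter-inverting involution of the algebra. Throughout I will write $g=g_{m+1}$, $a=t_m$ and $b=t_{m+1}$; recall from Lemma~\ref{brlre2}(iii) that $a$ and $b$ commute, and from Lemma~\ref{brlre2}(i) that $ga=bg^{-1}$, which is precisely the asserted identity for $k=1$ (the sum being empty). Before running the induction I would record the auxiliary relation $g^{-1}a=q^{-1}bg^{-1}+(q^{-1}-1)a$: writing the quadratic relation as $g^{-1}=q^{-1}g+(q^{-1}-1)$ (Lemma~\ref{brrel}(i) with $m=1$) and using $ga=bg^{-1}$, one gets $g^{-1}a=q^{-1}(ga)+(q^{-1}-1)a=q^{-1}bg^{-1}+(q^{-1}-1)a$.

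For the inductive step I would assume (i) for $k$ and expand $ga^{k+1}=(ga^{k})a$, multiplying the induction hypothesis on the right by $a$. Since $a$ commutes with $b$, it passes freely through every power of $b$, so the only genuine interaction occurs in the leading term $q^{-(k-1)}b^{k}g^{-1}a$, into which I substitute the auxiliary relation. This yields the term $q^{-k}b^{k+1}g^{-1}$, which is the correct leading term for $k+1$; the term $q^{-(k-1)}(q^{-1}-1)ab^{k}$, which is exactly the $j=1$ summand of the target; and the old sum with $a$ absorbed, namely $\sum_{j=1}^{k-1}q^{-(k-1-j)}(q^{-1}-1)a^{j+1}b^{k-j}$. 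Re-indexing $j\mapsto j+1$ converts this last piece into the summands $j=2,\dots,k$ of the target, so the three pieces assemble into the formula for $k+1$. The main obstacle is purely this exponent bookkeeping in the reindexing, which is the step I expect to be most error-prone.

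For (ii) I would introduce the involution $F$ determined on generators by $t\mapsto t^{-1}$, $g_i\mapsto g_i^{-1}$ and $q\mapsto q^{-1}$, and verify that it respects the defining relations of $\textrm{H}_{1,n}(q)$: inverting both sides of $g_1tg_1t=tg_1tg_1$ returns the same relation in the inverted generators, the commutations are immediate, and the quadratic relation $g_i^{2}=(q-1)g_i+q$ is stable under $g_i\mapsto g_i^{-1}$, $q\mapsto q^{-1}$. From $t_i=g_ig_{i-1}\cdots g_1 t g_1\cdots g_{i-1}g_i$ it follows that $F(t_i)=g_i^{-1}\cdots g_1^{-1}t^{-1}g_1^{-1}\cdots g_i^{-1}=t_i^{-1}$, so applying $F$ to (i) sends $g_{m+1}t_m^{k}$ to $g_{m+1}^{-1}t_m^{-k}$ and turns the right-hand side of (i) into that of (ii), all the powers of $q$ changing sign as required. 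Alternatively, (ii) admits the same induction as (i), starting from the base case $g_{m+1}^{-1}t_m^{-1}=t_{m+1}^{-1}g_{m+1}$, which is a rearrangement of $g_{m+1}t_{m+1}^{-1}=t_m^{-1}g_{m+1}^{-1}$ from Lemma~\ref{brlre2}(i).
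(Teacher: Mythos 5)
Your proposal is correct. For relation (i) it coincides with the paper's own proof: the same induction on $k$, writing $g_{m+1}t_m^{k}=(g_{m+1}t_m^{k-1})t_m$, pushing the extra $t_m$ through the powers of $t_{m+1}$ by Lemma~\ref{brlre2}(iii), resolving the resulting $g_{m+1}^{-1}t_m$ via the quadratic relation together with $g_{m+1}t_m=t_{m+1}g_{m+1}^{-1}$, and reindexing the sum --- identical mechanics and bookkeeping (your verification is in fact cleaner than the paper's, whose displayed computation contains typos such as a dropped factor $t_{m+1}^{k-1}$). The only genuine divergence is in (ii): the paper dismisses it with ``relations (ii) follow similarly,'' i.e.\ the parallel induction you mention as a fallback, whereas your primary route deduces (ii) from (i) by the semilinear involution $F\colon q\mapsto q^{-1}$, $g_i\mapsto g_i^{-1}$, $t\mapsto t^{-1}$. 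That argument is sound: your well-definedness checks are exactly the right ones (each braid and commutation relation maps to the inverse of itself, and the quadratic relation transforms into $g_i^{-2}=(q^{-1}-1)g_i^{-1}+q^{-1}$, which is the inverted quadratic relation), and the key identity $F(t_i)=t_i^{-1}$ holds precisely because $t_i=g_i\cdots g_1tg_1\cdots g_i$ is a palindromic word --- note that the same trick would \emph{not} work for the $t_i'$'s, since $F(t_i')=g_i^{-1}\cdots g_1^{-1}t^{-1}g_1\cdots g_i$ is not $(t_i')^{-1}$. What the involution buys is the elimination of a second round of sign-and-exponent bookkeeping, the very step you identify as most error-prone: instead of negating every exponent by hand and rechecking the reindexing, the symmetry between (i) and (ii) becomes a structural one-liner.
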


\begin{proof}
We prove relations~(i) by induction on $k$. Relations~(ii) follow similarly. For $k=1$ we have that $g_{m+1}t_m=t_{m+1}g_{m+1}^{-1}$, which holds from Lemma~\ref{brlre2}~(i). Suppose that the relation holds for $k-1$. Then, for $k$ we have:\\

\noindent $g_{m+1}t_m^k=g_{m+1}t_m^{k-1} t_m \overset{ind.}{\underset{step}{=}} q^{-(k-2)}t_{m+1}^{k-1} \underline{g_{m+1}^{-1}}t_m\ +\ \sum_{j=1}^{k-2}{q^{-(k-2-j)}(q^{-1}-1)t_m^j t_{m+1}^{k-1-j}}t_m=$\\

\noindent $=\ q^{-(k-1)}g_{m+1}t_m\ +\ q^{-(k-2)}(q^{-1}-1)t_m t_{m+1}^{k-1} \ + \sum_{j=1}^{k-2}{q^{-(k-2-j)}(q^{-1}-1)t_m^{j+1} t_{m+1}^{k-1-j}}\ =$\\

\noindent $=\ q^{-(k-1)}t_{m+1}g_{m+1}^{-1}\ +\ \sum_{j=1}^{k}{q^{-(k-1-j)}(q^{-1}-1)t_m^{j} t_{m+1}^{k-j}}$.
\end{proof}

\begin{lemma} \label{loopcycles1}
In ${\rm H}_{1,n}(q)$ the following relations hold:

\begin{itemize}
\item[(i)] For the expression $A=\left(g_rg_{r-1} \ldots g_{r-s}\right)\cdot t_{k}$ the following hold for the different values of $k \in \mathbb{N}$:
\[
\begin{array}{llll}
(1) & A & = & t_k \left(g_r \ldots g_{r-s} \right) \ \ \ \ \ \ \ \ \ \ \ \ \ \ \ \ \ \ \ \ \ \ \ \ \ \ \ \ \ \ \ \ \ \ \ \ \ \ \ \ \ \ \ \ \ \ \ \ \ \ {\rm for}\ k>r\ {\rm or}\ k<r-s-1 \\
&&&\\
(2) & A & = & t_r \left(g_{r}^{-1} \ldots g_{r-s}^{-1} \right) \ \ \ \ \ \ \ \ \ \ \ \ \ \ \ \ \ \ \ \ \ \ \ \ \ \ \ \ \ \ \ \ \ \ \ \ \ \ \ \ \ \ \ \ \ \ \ \ {\rm for}\ k=r-s-1 \\
&&&\\
(3) & A & = & qt_{r-1} \left(g_r \ldots g_{r-s})+(q-1)t_r(g_{r-1} \ldots g_{r-s} \right) \ \ \ \ \ \qquad \ \ \ \ \ {\rm for}\ k=r \\
&&&\\
(4) & A & = & qt_{r-s-1} \left(g_{r} \ldots g_{r-s} \right)+(q-1)t_r \left(g_{r}^{-1} \ldots g_{r-s+1}^{-1} \right)  \ \ \ \ \ \ \ \ \ \ {\rm for}\ k=r-s \\
&&&\\
(5) & A & = & t_{m-1} \left(g_{r} \ldots g_{r-s} \right)+(q-1)t_r \left(g_{r}^{-1} \ldots g_{m+1}^{-1} \right)(g_{m-1}\ldots g_{r-s})\\
&&&\\
& & & {\rm for} \ k=m\in \{r-s+1,\ldots, r-1\}. \\
\end{array}
\]

\item[(ii)] For the expression $A=\left(g_rg_{r-1} \ldots g_{r-s}\right)\cdot t_{k}^{-1}$ the following hold for the different values of $k \in \mathbb{N}$:
\[
\begin{array}{llll}
(1) & A & = & t_{k}^{-1} \left(g_r \ldots g_{r-s} \right) \ \ \ \ \ \ \ \ \ \ \ \ \ \ \ \ \ \ \ \ \ \ \ \ \ \ \ \ \ \ \ \ \ \ \ \ \ \ \ \ \ \ \ {\rm for}\ k>r\ {\rm or}\ k<r-s-1 \\
&&&\\
(2) & A & = & t_{r-s-1}^{-1} \left(g_r \ldots g_{r-s+1}g_{r-s}^{-1} \right)  \ \ \ \ \ \ \ \ \ \ \ \ \ \ \ \ \ \ \ \ \ \ \ \ \ \ \ \ \ \ \  {\rm for}\ k=r-s \\
&&&\\
(3) & A & = & t_{m-1}^{-1} \left(g_rg_{r-1} \ldots g_{m+1}g_{m}^{-1}g_{m-1}\ldots g_{k-s} \right) \ \ \ \ \ \ \ \ \ \ \ \ \ \ \ {\rm for}\ k=m\in \{r-s+1, \ldots, r\}  \\
&&&\\
(4) & A & = & q^{s+1}t_{r}^{-1} \left(g_{r} \ldots g_{r-s} \right)+(q-1)\sum_{j=1}^{s+1}q^{s-j+1}t_{r-j}^{-1} \left(g_r\ldots g_{r-j+2}g_{r-j}\ldots g_{r-s}\right)  \\
&&&\\
& & &  {\rm for}\ k=r-s-1. \\
\end{array}
\]
\end{itemize}
\end{lemma}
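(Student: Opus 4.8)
The plan is to prove each identity by a direct computation: push the single looping generator $t_k^{\pm 1}$ leftward through the descending braiding word $g_r g_{r-1}\ldots g_{r-s}$, one generator at a time, invoking only the commutations and interaction relations collected in Lemma~\ref{brlre2}(i). The whole statement then organizes itself according to where the index $k$ sits relative to the window $\{r-s-1,r-s,\ldots,r\}$ of indices occurring in the word, since this is precisely what decides which of the relations of Lemma~\ref{brlre2}(i) is triggered when $t_k^{\pm 1}$ meets a given generator $g_i$ (the commutation $g_i t_k^{\pm1}=t_k^{\pm1}g_i$ when $k>i$ or $k<i-1$, the clean swaps $g_i t_{i-1}=t_i g_i^{-1}$ and $g_i t_i^{-1}=t_{i-1}^{-1}g_i^{-1}$, or the genuinely two-term moves $g_i t_i$ and $g_i t_{i-1}^{-1}$).

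I would first dispose of the commuting cases (i)(1) and (ii)(1): if $k>r$ or $k<r-s-1$ then every generator $g_i$ in the word satisfies $k>i$ or $k<i-1$, so $t_k^{\pm1}$ slides past all of them unchanged. For every remaining configuration except (ii)(4), pushing $t_k^{\pm1}$ to the left meets at most one two-term relation and is otherwise driven only by commutations and the clean swaps, so the process terminates after finitely many branch-free steps. Concretely, I would commute $t_k^{\pm1}$ freely to the unique generator $g_k$ it can interact with, apply the relevant relation, and then observe that each resulting summand resolves by itself: a looping generator whose index has dropped commutes straight out to the far left, while a clean swap raises the looping index by one and deposits an inverse generator, so iterating clean swaps up the top block $g_r\ldots g_{k+1}$ manufactures exactly the tails $t_r(g_r^{-1}\ldots g_{k+1}^{-1})$ appearing in (i)(2), (i)(4), (i)(5), (ii)(2) and (ii)(3). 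All coefficients here are read off directly from Lemma~\ref{brlre2}(i).

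The genuine obstacle is case (ii)(4), $k=r-s-1$ with an inverse looping generator, which is the only configuration that re-enters itself. The relevant move is the two-term relation $g_i t_{i-1}^{-1}=q\,t_i^{-1}g_i+(q-1)\,t_{i-1}^{-1}$ (first with $i=r-s$), and after it fires the $q$-branch again presents an inverse looping generator one step further up the chain, so it meets the same two-term relation rather than a clean swap; this is exactly the asymmetry with the positive bottom case (i)(4), whose climbing branch is resolved by the clean swap $g_j t_{j-1}=t_j g_j^{-1}$. I would therefore prove (ii)(4) by induction on $s$. The base case $s=0$ is the bare relation $g_r t_{r-1}^{-1}=q\,t_r^{-1}g_r+(q-1)\,t_{r-1}^{-1}$, which matches the stated formula with an empty braiding tail. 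For the inductive step I peel off the rightmost generator, writing $A=(g_r\ldots g_{r-s+1})\,(g_{r-s}t_{r-s-1}^{-1})$, expand the last factor by the two-term relation, send the $(q-1)$-branch straight out to the left (its index $r-s-1$ is too small to interact with $g_r,\ldots,g_{r-s+1}$) to produce the $j=s+1$ summand, and feed the $q$-branch $(g_r\ldots g_{r-s+1})\,t_{r-s}^{-1}$ into the induction hypothesis before reattaching $g_{r-s}$ on the right.

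The delicate part of this last step is purely bookkeeping: tracking the accumulating powers $q^{s-j+1}$ and the position $r-j+1$ of the single omitted generator in each summand, and reindexing the inherited sum so that the two families of terms combine into the stated expression. Since no type-$t'$ generator ever appears in these manipulations, Lemma~\ref{brlre2}(i) supplies all the input needed, and relations (ii) of the present lemma follow by the same argument with the roles of the clean swaps and two-term moves interchanged.
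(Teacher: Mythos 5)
Your proposal is correct and follows essentially the same route as the paper: the authors likewise dispatch every case except (ii)(4) as a direct application of Lemma~\ref{brlre2}(i), and prove (ii)(4) by induction on $s$ exactly as you do --- peeling off the lowest-index generator, expanding $g_{r-s}t_{r-s-1}^{-1}$ by the two-term relation, commuting the $(q-1)$-branch out to the left, and feeding the $q$-branch into the induction hypothesis before reattaching $g_{r-s}$. The only differences are cosmetic (your base case is $s=0$ rather than the paper's $s=1$).
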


\begin{proof}

We only prove relations~(ii) for $k=r-s-1$ by induction on $s$ (case~4). All other relations follow from Lemma~\ref{brlre2}~(i).

\smallbreak

\noindent For $s=1$ we have:

\[
\begin{array}{lll}
g_r \underline{g_{r-1}t_{r-2}^{-1}} & = &  g_r[qt_{r-1}^{-1}g_{r-1}+(q-1)t_{r-2}^{-1}]\ =\  q\underline{g_rt_{r-1}^{-1}}g_{r-1}+(q-1)g_rt_{r-2}^{-1} \\
&&\\
& = & q[qt_r^{-1}g_r+(q-1)t_{r-1}^{-1}]g_{r-1}+(q-1)t_{r-2}^{-1}g_r\\
&&\\
& = & q^2t_r^{-1}(g_rg_{r-1})+(q-1)\left[qt_{r-1}^{-1}g_{r-1}\ + \ q^0t_{r-2}^{-1}g_r \right],\\
\end{array}
\]

\bigbreak

\noindent and so the relation holds for $s=1$. Suppose that the relation holds for $s=n$. We will show that it holds for $s=n+1$. Indeed we have:\\

\noindent $(g_r \ldots g_{r-n-1})t_{r-n-2}^{-1} =  (g_r \ldots g_{r-n})(\underline{g_{r-n-1}t_{r-n-2}^{-1}}) =  (g_r \ldots g_{r-n})\left[qt_{r-n-1}^{-1}g_{r-n-1}+(q-1)t_{r-n-2}^{-1}\right] = $\\

\noindent $ = q(\underline{g_r \ldots g_{r-n}t_{r-n-1}^{-1}})g_{r-n-1} + (q-1)(g_r \ldots g_{r-n})t_{r-n-2}^{-1} \overset{\rm ind. step}{=}\ q^{n+2}t_r^{-1}(g_r \ldots g_{r-n-1})\ +$  \\

\noindent $ +\ (q-1)\sum_{j=1}^{n+1}{q^{n-j+2}t_{r-j}^{-1}(g_r\ldots g_{r-j+2}g_{r-j}\ldots g_{r-n-1})}\ +\ (q-1)t_{r-n-2}^{-1}(g_r\ldots g_{r-n})\ =$ \\

\noindent $=\ q^{n+2}t_r^{-1}(g_r \ldots g_{r-n-1}) +\ (q-1)\sum_{j=1}^{n+2}{q^{(n+1)-j+1}t_{r-j}^{-1}(g_r\ldots g_{r-j+2}g_{r-j}\ldots g_{r-n-1})}.$\\

\end{proof}

Before proceeding with the next lemma we introduce the notion of length of $w\in \textrm{H}_n(q)$. For convenience we set $\delta_{k,r}:= g_kg_{k-1}\ldots g_{r+1}g_r$ for $k>r$ and by convention we set $\delta_{k,k}:= g_k$.

\begin{defn}\label{length} \rm
We define the \textit{length} of $\delta_{k,r} \in {\rm H}_n(q)$ as $l(\delta_{k,r}):=k-r+1$ and since every element of the Iwahori-Hecke algebra of type A can be written as $\prod_{i=1}^{n-1}{\delta_{k_i,r_i}}$ so that $k_j<k_{j+1}\ \forall j$, we define the \textit{length} of an element $w\in \textrm{H}_n(q)$ as
$$l(w):=\sum_{i=1}^{n-1}{l_i(\delta_{k_i,r_i})} = \sum_{i=1}^{n-1}{k_i-r_i+1}.$$
\end{defn}

Note that $l(g_k)=l(\delta_{k,k})=k-k+1=1$.

\begin{lemma}\label{brlre4}
For $k>r$ the following relations hold in ${\rm H}_{1,n}(q)$:

$$t_k \delta_{k,r} = \sum_{i=0}^{k-r}{q^i(q-1)\delta_{k,\overline{k-i},r}t_{k-i}} + q^{l(\delta_{k,r})} \delta_{k,r} t_{r-1},$$

\noindent where $\delta_{k,\overline{k-i},r}:= g_kg_{k-1} \ldots g_{k-i+1} g_{k-i-1} \ldots g_r:= g_k \ldots \overline{g_{k-i}} \ldots g_r$.

\end{lemma}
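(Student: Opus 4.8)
The plan is to prove the identity
$$t_k \delta_{k,r} = \sum_{i=0}^{k-r}{q^i(q-1)\delta_{k,\overline{k-i},r}t_{k-i}} + q^{l(\delta_{k,r})} \delta_{k,r} t_{r-1}$$
by induction on the length $l(\delta_{k,r}) = k-r+1$, which amounts to peeling off one braiding generator at a time from the right end of $\delta_{k,r}$. The essential computational input will be Lemma~\ref{brlre2}(i), specifically the relations $g_i t_i = q t_{i-1}g_i + (q-1)t_i$, $g_i t_{i-1} = t_i g_i^{-1}$, and the commuting relation $g_i t_k = t_k g_i$ for $k \neq i, i-1$. First I would establish the base case $l=1$, i.e. $\delta_{k,k} = g_k$, where the claimed identity reads $t_k g_k = q^0(q-1)\delta_{k,\overline{k}}t_k + q^1 g_k t_{k-1}$; here $\delta_{k,\overline{k}}$ is the empty word (the single generator $g_k$ omitted), so this is exactly the rewritten form of $t_k g_k = (q-1)t_k + q g_k t_{k-1}$, which is Lemma~\ref{brlre2}(i) read right-to-left.

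For the inductive step I would write $\delta_{k,r} = \delta_{k,r+1}\, g_r$ with $l(\delta_{k,r+1}) = k-r$, and start from the known expansion of $t_k \delta_{k,r+1}$ (the inductive hypothesis), then multiply on the right by $g_r$ and push it through each term. The key point is that in each summand $\delta_{k,\overline{k-i},r+1} t_{k-i}$ the looping generator $t_{k-i}$ carries index $k-i \geq r+1$, so $g_r$ interacts with $t_{k-i}$ only when $k-i = r+1$ (giving the relation $t_{r+1}g_r = q t_r + (q-1)\cdots$ type rewriting) and otherwise commutes past it; in the leading term $q^{l(\delta_{k,r+1})}\delta_{k,r+1}t_r$ the generator $g_r$ meets $t_r$ and we apply $t_r g_r = q t_{r-1}g_r + (q-1)t_r$, i.e. again Lemma~\ref{brlre2}(i), producing both a new $t_{r-1}$-term with the extra factor of $q$ (accounting for the jump $q^{l(\delta_{k,r+1})} \to q^{l(\delta_{k,r})}$) and a correction term that must be absorbed into the sum. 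I would track carefully how the omission index shifts so that $\delta_{k,\overline{k-i},r+1}g_r$ produces precisely $\delta_{k,\overline{k-i},r}$ and how the newly created correction term lands in the $i = k-r$ slot of the enlarged sum.

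The main obstacle I anticipate is purely bookkeeping rather than conceptual: one must verify that the various correction terms generated at each step recombine so that the coefficients $q^i(q-1)$ and the omitted-generator words $\delta_{k,\overline{k-i},r}$ come out exactly as stated, with the index range extending correctly from $0 \leq i \leq k-r-1$ to $0 \leq i \leq k-r$, and that the single surviving ``clean'' term is $q^{l(\delta_{k,r})}\delta_{k,r}t_{r-1}$. I would organize the computation by underlining, at each stage, the subword where $g_r$ is to be absorbed (matching the convention announced before Lemma~\ref{brlre3}), apply the relevant case of Lemma~\ref{brlre2}(i), and then reindex the resulting sum. A cleaner alternative, which I would mention, is to induct instead on $t_k \delta_{k,r} = t_k \delta_{k,r+1} g_r$ using the already-proven Lemma~\ref{loopcycles1}(i) to handle the passage of a single $t_k$ through the descending braid $g_k \ldots g_r$; but the direct length induction using only Lemma~\ref{brlre2}(i) seems most transparent and self-contained.
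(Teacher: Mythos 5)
Your overall strategy---induction on $l(\delta_{k,r})$, peeling the rightmost generator $g_r$ off and pushing it through the inductive expansion of $t_k\delta_{k,r+1}$---is sound, and it is essentially the mirror image of the paper's proof, which inducts on $k$ by peeling the \emph{leftmost} generator: the paper first applies $t_kg_k=(q-1)t_k+qg_kt_{k-1}$ and then invokes the inductive hypothesis on $t_{k-1}\delta_{k-1,r}$, whereas you invoke the hypothesis on $t_k\delta_{k,r+1}$ first and then rewrite once at the right end. However, your inductive step fails as written, because the pivotal relation you apply is false. You write $t_rg_r = qt_{r-1}g_r+(q-1)t_r$ and attribute it to Lemma~\ref{brlre2}(i); but Lemma~\ref{brlre2}(i) expands $g_rt_r$ (braiding generator on the \emph{left}), and transplanting its right-hand side to $t_rg_r$ amounts to asserting $t_rg_r=g_rt_r$, which is false in ${\rm H}_{1,n}(q)$. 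The correct right-handed relation---which you in fact state correctly in your base case---is $t_rg_r=(q-1)t_r+qg_rt_{r-1}$; it is Lemma~\ref{brlre2}(ii) at $k=1$, or follows from $t_r=g_rt_{r-1}g_r$ together with the quadratic relation. The order of $g_r$ and $t_{r-1}$ is precisely the content of the lemma: since $g_rt_{r-1}=t_rg_r^{-1}\neq t_{r-1}g_r$, your version turns the leading term $q^{k-r}\delta_{k,r+1}t_rg_r$ into $q^{k-r+1}\delta_{k,r+1}t_{r-1}g_r$, which is \emph{not} the required $q^{l(\delta_{k,r})}\delta_{k,r}t_{r-1}=q^{k-r+1}\delta_{k,r+1}g_rt_{r-1}$; the correct version yields it at once, with the correction term $q^{k-r}(q-1)\delta_{k,r+1}t_r$ landing in the $i=k-r$ slot exactly as you predicted.

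A second, related slip: you claim $g_r$ interacts with $t_{k-i}$ when $k-i=r+1$, citing a relation ``$t_{r+1}g_r=qt_r+(q-1)\cdots$''. No such relation exists: by Lemma~\ref{brlre2}(i), $g_r$ commutes with every $t_j$ for $j>r$ (and for $j<r-1$), so \emph{all} summands $\delta_{k,\overline{k-i},r+1}t_{k-i}$ with $0\le i\le k-r-1$ commute with $g_r$ outright and simply become $\delta_{k,\overline{k-i},r}t_{k-i}$; the only nontrivial interaction in the whole step is the leading term's $t_r$ against $g_r$. The same left/right confusion undermines your proposed ``cleaner alternative'': Lemma~\ref{loopcycles1}(i) expands $(g_rg_{r-1}\cdots g_{r-s})\cdot t_k$, with the braid word on the left of the looping generator, so it does not apply to $t_k\delta_{k,r}$. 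Once the correct relation $t_rg_r=(q-1)t_r+qg_rt_{r-1}$ is substituted, your induction closes exactly as outlined---and the bookkeeping is, if anything, lighter than in the paper's left-peeling argument, since no reindexing of the sum beyond appending the $i=k-r$ term is needed.
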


\begin{proof}

We prove relations by induction on $k$. For $k=1$ we have that $t_1 g_1\ =\ (q-1) t_1 + q g_1 t$, which holds. Suppose that the relation
holds for $(k-1)$, then for $k$ we have:

\[
\begin{array}{lll}
t_k \delta_{k,r} & = & \underline{t_k g_k} \cdot \delta_{k-1,r} = (q-1) t_k \delta_{k-1,r} + q g_k \underline{t_{k-1} \delta_{k-1,r}} =\\
&&\\
& = & (q-1) \delta_{k-1,r} t_k + q g_k \sum_{i=0}^{k-1-r}{q^i(q-1)\delta_{k-1,\overline{k-1-i},r}t_{k-1-i}} + q^{l(\delta_{k-1,r})+1} g_k \delta_{k-1,r} t_{r-1}=\\
&&\\
& = & \sum_{i=0}^{k-r}{q^i(q-1)\delta_{k,\overline{k-1-i},r}t_{k-1-i}} + q^{l(\delta_{k,r})} \delta_{k,r} t_{r-1}.\\
\end{array}
\]

\end{proof}

\begin{lemma} \label{loopcycles2}
In ${ \rm H}_{1,n}(q)$ the following relations hold:

\begin{itemize}
\item[(i)] For the expression $A=\left(g_rg_{r+1} \ldots g_{r+s} \right)\cdot t_{k}$ the following hold for the different values of $k \in \mathbb{N}$:
\[
\begin{array}{llll}
(1) & A & = & t_k \left(g_r \ldots g_{r+s}\right) \ \ \ \ \ \ \ \ \ \ \ \ \ \ \ \ \ \ \ \ \ \ \ \ \ \ \ \ \ \ \ \ \ \ \ \ \ \ \ \ \ \ \ \ \ {\rm for}\ k\geq r+s+1 \ {\rm or} \ k<r-1 \\
&&&\\
(2) & A & = & t_{k+1} \left(g_{r} \ldots g_{k} g_{k+1}^{-1} g_{k+2} \ldots g_{r+s} \right) \ \ \ \ \ \ \ \ \ \ \ \ \ \ \ \ \ \ \ \ \ \ \ \ \ \ \ \ \ \ \ \ \ \ \ {\rm for}\ r-1 \leq k < r+s \\
&&&\\
(3) & A & = & (q-1) \sum_{i=r}^{r+s}{q^{r+s-i}t_i \left(g_r\ldots \overline{g_i} \ldots g_{r+s} \right)} + q^{s+1}t_{r-1} \left(g_r\ldots g_{r+s} \right) \ \ {\rm for} \ \ \ \ \ \ \ k=r+s \\
\end{array}
\]

\item[(ii)] For the expression $A=\left(g_rg_{r+1} \ldots g_{r+s} \right)\cdot t_{k}^{-1}$ the following hold for the different values of $k \in \mathbb{N}$:
\[
\begin{array}{llll}
(1) & A & = & t_{k}^{-1}\left(g_rg_{r+1} \ldots g_{r+s} \right) \ \ \ \ \ \ \ \ \ \ \ \ \ \ \ \ \ \ \ \ \ \ \ \ \ \ \ \ \ \ \ \ \ \ \ \ \ \ \ {\rm for}\ k \geq r+s+1\ {\rm or} \ k < r-1 \\
&&&\\
(2) & A & = & q\ t_{k+1}^{-1} \left(g_r \ldots g_{r+s}\right) + (q-1)\ t_{r-1}^{-1} \left(g_{r}^{-1} \ldots g_k^{-1} g_{k+2}\ldots g_{r+s} \right)\ \ \ {\rm for}\ r-1 \leq k < r+s \\
&&&\\
(3) & A & = & t_{r-1}^{-1} \left(g_r^{-1} \ldots g_{r+s}^{-1} \right)\ \ \ \ \ \ \ \ \ \ \ \ \ \ \ \ \ \ \ \  \ \ \ \ \ \ \ \ \ \ \ \ \ \ \ \ \ \  \ \ \ \ \ \ \ \ \ \ \ \ \ \ \ \ \ \ \ \ \ \ \ {\rm for}\ k=r+s \\
\end{array}
\]
\end{itemize}
\end{lemma}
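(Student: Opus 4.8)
The plan is to prove every relation by pushing the looping generator $t_k^{\pm 1}$ leftward through the ascending braid word $g_rg_{r+1}\ldots g_{r+s}$, using only the exchange relations of Lemma~\ref{brlre2}~(i). The organizing observation is that among the factors $g_r,\ldots,g_{r+s}$ the only one interacting nontrivially with $t_k^{\pm 1}$ is the one whose index is adjacent to $k$: every $g_i$ with $i\geq k+2$ or $i\leq k-1$ commutes with $t_k^{\pm1}$ by the first relation of Lemma~\ref{brlre2}~(i). Thus the three cases in each part correspond to $k$ lying above the index range, strictly inside it, or at its top endpoint $k=r+s$. I would treat parts (i) and (ii) in parallel, since they differ only in the sign $\epsilon$ and in which single exchange relation is applied.

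The cases~(1) are immediate: when $k\geq r+s+1$ or $k<r-1$ every factor $g_i$ commutes with $t_k^{\pm1}$, so it passes to the front unchanged. For the cases~(2), where $r-1\leq k<r+s$, I would first commute $t_k^{\pm1}$ past the trailing block $g_{k+2}\ldots g_{r+s}$, then let the single raising generator $g_{k+1}$ act: in part~(i) this uses $g_{k+1}t_k=t_{k+1}g_{k+1}^{-1}$, after which $t_{k+1}$ commutes back past $g_r\ldots g_k$; in part~(ii) it uses $g_{k+1}t_k^{-1}=qt_{k+1}^{-1}g_{k+1}+(q-1)t_k^{-1}$, splitting $A$ into a leading term $q\,t_{k+1}^{-1}(g_r\ldots g_{r+s})$ and a tail $(q-1)(g_r\ldots g_k)t_k^{-1}(g_{k+2}\ldots g_{r+s})$. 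I would reduce the tail factor $(g_r\ldots g_k)t_k^{-1}$ by recognizing it as an instance of case~(ii)(3) with $s$ replaced by $k-r$, which forces (ii)(3) to be established first. Throughout I must check the boundary situations $k=r-1$ (empty prefix $g_r\ldots g_k$) and $k=r+s-1$ (empty suffix $g_{k+2}\ldots g_{r+s}$) separately, where the empty products make the formulas degenerate gracefully.

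The genuine content lies in the two cases~(3), at $k=r+s$, which I would prove by induction on $s$. Peeling off the rightmost factor, $A=(g_r\ldots g_{r+s-1})(g_{r+s}t_{r+s}^{\pm1})$. In part~(ii) the exchange $g_{r+s}t_{r+s}^{-1}=t_{r+s-1}^{-1}g_{r+s}^{-1}$ turns this into $(g_r\ldots g_{r+s-1})t_{r+s-1}^{-1}g_{r+s}^{-1}$; the induction hypothesis at $s-1$ rewrites the front as $t_{r-1}^{-1}(g_r^{-1}\ldots g_{r+s-1}^{-1})$, and appending $g_{r+s}^{-1}$ gives the claim. Part~(i) is the more delicate: $g_{r+s}t_{r+s}=qt_{r+s-1}g_{r+s}+(q-1)t_{r+s}$ produces $q(g_r\ldots g_{r+s-1})t_{r+s-1}\,g_{r+s}$ and $(q-1)(g_r\ldots g_{r+s-1})t_{r+s}$. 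The first is the case-(i)(3) expression at $s-1$ with $g_{r+s}$ appended on the right, so the induction hypothesis applies and the appended $g_{r+s}$ exactly fills $\overline{g_i}\ldots g_{r+s-1}$ into $\overline{g_i}\ldots g_{r+s}$ while raising each power $q^{r+s-1-i}$ to $q^{r+s-i}$ and $q^{s}$ to $q^{s+1}$; the second piece, by commutation, equals $(q-1)t_{r+s}(g_r\ldots g_{r+s-1})$, which is precisely the missing $i=r+s$ summand. Reassembling yields the stated sum plus the $q^{s+1}t_{r-1}$ term.

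The main obstacle I anticipate is purely bookkeeping: matching the omitted-generator notation $g_r\ldots\overline{g_i}\ldots g_{r+s}$, the exponents $q^{r+s-i}$, and the boundary index ranges so that the induction in case~(i)(3) closes exactly. There is no conceptual difficulty beyond Lemma~\ref{brlre2}~(i); the care is entirely in the indices and in ordering the arguments so that case~(ii)(2) may legitimately invoke case~(ii)(3).
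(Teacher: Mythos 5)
Your proposal is correct and follows essentially the same route as the paper: the paper likewise proves the key case (i)(3) by induction, peeling off the rightmost generator, applying the exchange relation $g_{n+1}t_{n+1}=qt_{n}g_{n+1}+(q-1)t_{n+1}$, invoking the induction hypothesis, and absorbing the leftover term as the missing top summand, while dismissing all remaining cases as direct consequences of Lemmas~\ref{brrel} and~\ref{brlre2}. Your write-up simply makes explicit the routine commutation arguments for those remaining cases (including the dependence of (ii)(2) on (ii)(3)) that the paper leaves to the reader.
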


\begin{proof}

We prove relation~(i) for $r+s=k$ by induction on $k$ (case~3). All other relations follow from Lemmas~\ref{brrel} and \ref{brlre2}.

For $k=1$ we have: $g_1t_1 = \underline{g_1^2}tg_1 = qtg_1+(q-1)t_1$. Suppose that the relation holds for $k=n$. Then, for $k=n+1$ we have that:

\noindent $g_r\ldots \underline{g_{n+1} t_{n+1}} = q(\underline{g_r \ldots g_n t_n}) g_{n+1} + (q-1) \underline{(g_r\ldots g_n)t_{n+1}} \overset{ind. step}{=}$\\

\noindent $=q\left[(q-1) \sum_{i=r}^{n}{q^{n-i}t_i(g_r \ldots \overline{g_i} \ldots g_n)} + q^{n-r+1}t_{r-1}(g_r\ldots g_n) \right]g_{n+1} + (q-1) t_{n+1}(g_r\ldots g_n)=$\\

\noindent $= \left( (q-1) \sum_{i=r}^{n}{q^{n-i+1}t_i(g_r \ldots \overline{g_i} \ldots g_ng_{n+1})} + (q-1) t_{n+1}(g_r\ldots g_n) \right) + q^{n+1-r+1}t_{r-1}(g_r\ldots g_ng_{n+1})= $\\

\noindent $= (q-1) \sum_{i=r}^{n+1}{q^{n+1-i}t_i(g_r \ldots \overline{g_i} \ldots g_{n+1})} + q^{n+1-r+1}t_{r-1}(g_r \ldots g_{n+1}).$

\end{proof}

\begin{lemma} \label{loopbridge}
The following relations hold in ${ \rm H}_{1,n}(q)$ for $k \in \mathbb{N}$:

\[
\begin{array}{llll}
(i) & \left(g_1\ldots g_{i-1}g_i^{2}g_{i-1}\ldots g_1\right)\cdot t & = & (q-1) \sum_{k=1}^{i}{q^{i-k} t_k \left(g_1\ldots g_{k-1}g_k^{-1}g_{k-1}^{-1}\ldots g_1^{-1}\right)}+q^it \\
&&&\\
(ii) & \left(g_1^{-1} \ldots g_{i-1}^{-1} g_i^{-2} g_{i-1}^{-1} \ldots g_1^{-1}\right) \cdot t^{-1} & = & (q^{-1}-1) \sum_{k=1}^{i}{q^{-(i-k)} t_k^{-1} \left(g_1^{-1} \ldots g_{k-1}^{-1} g_k g_{k-1} \ldots g_1\right)} + \\
&&&\\
& & + & q^{-i}\ t^{-1} \\
&&&\\
(iii) & \left(g_k^{-1} \ldots g_2^{-1}g_1^{-2}g_2^{-1} \ldots g_k^{-1}\right)\cdot t_k & = & (q^{-1}-1) \sum_{i=1}^{k-1}{q^{-k}t_i \left(g_k^{-1} \ldots g_{i+2}^{-1}g_{i+1}g_{i+2} \ldots g_k \right)}\ + \\
&&&\\
& & + & q^{-k}t_k \\
&&&\\
(iv) & \left(g_k^{-1} \ldots g_2^{-1}g_1^{-2}g_2^{-1} \ldots g_k^{-1}\right)\cdot t_k^{-1} & = & t^{-1} q^{-k}(q^{-1}-1)g_k^{-1}\ldots g_1^{-1}\ldots g_k^{-1}\ + \\
&&&\\
& & + & \sum_{i=0}^{k-1}{t_i^{-1}q^{-k+i}(q^{-1}-1)g_k^{-1}\ldots g_1^{-2}\ldots g_i^{-1}g_{i+2}^{-1}\ldots g_k^{-1}}\ + \\
&&&\\
& & + &  t_k^{-1} \big[\sum_{i=2}^{k}{q^{-k+i}(q^{-1}-1)^2g_{i-1}^{-1}\ldots g_2^{-1}g_1^{-2}g_2^{-1}\ldots g_{i-1}^{-1}}\ +\\
&&&\\
& & + &  q^{-(k+1)}(q^{2}-q+1)\big].\\
\end{array}
\]
\end{lemma}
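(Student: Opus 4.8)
The plan is to prove each of the four relations by induction on the index ($i$ in (i) and (ii), $k$ in (iii) and (iv)), using the quadratic relation~(\ref{quad}) to split off the squared generator sitting in the centre of the conjugating word and thereby reduce to a word of smaller index. Relation (i) serves as the model case, (ii) is its mirror image, and (iii), (iv) are treated by the same mechanism applied to the palindromic word $g_k^{-1}\ldots g_1^{-2}\ldots g_k^{-1}$. I would first handle (i). For the base case $i=1$ one has $g_1^2 t=\bigl((q-1)g_1+q\bigr)t$ by~(\ref{quad}), and since $t_1=g_1tg_1$ yields $t_1g_1^{-1}=g_1t$, this is precisely $(q-1)t_1g_1^{-1}+qt$, the claimed right-hand side.

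For the inductive step I would write $g_1\ldots g_{i-1}g_i^2g_{i-1}\ldots g_1\cdot t$ and replace the central $g_i^2$ by $(q-1)g_i+q$. The summand carrying the factor $q$ becomes $q\,(g_1\ldots g_{i-2}g_{i-1}^2g_{i-2}\ldots g_1)\,t$, to which the induction hypothesis at index $i-1$ applies directly; multiplying that hypothesis through by $q$ reproduces both the term $q^i t$ and the partial sum $\sum_{k=1}^{i-1}$ with the correct powers $q^{i-k}$. The summand carrying the factor $(q-1)$ is $(q-1)\,(g_1\ldots g_{i-1}g_ig_{i-1}\ldots g_1)\,t$, and producing the missing $k=i$ term reduces to the single braid identity
\[
g_1\ldots g_{i-1}g_ig_{i-1}\ldots g_1\cdot t \ = \ t_i\cdot\bigl(g_1\ldots g_{i-1}g_i^{-1}g_{i-1}^{-1}\ldots g_1^{-1}\bigr).
\]
This in turn follows from Lemma~\ref{brrel}(iii) (which rewrites the full bridge as $g_i\ldots g_2g_1g_2\ldots g_i$) together with the relation $tg_j=g_jt$ for $j>1$ coming from the presentation, after which the result is recognised as $t_i$ times the inverse half-bridge. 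Relation (ii) is obtained by the identical argument with every $g_j$ replaced by $g_j^{-1}$, $t$ by $t^{-1}$ and $q$ by $q^{-1}$, using $g_i^{-2}=(q^{-1}-1)g_i^{-1}+q^{-1}$ from Lemma~\ref{brrel}(i).

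For (iii) and (iv) the squared generator sits at the bottom of the conjugating word, so I would instead expand $g_k^{-1}\ldots g_2^{-1}g_1^{-2}g_2^{-1}\ldots g_k^{-1}$ once and for all by Lemma~\ref{brrel}(v) with $\epsilon=-1$, turning it into a sum of sub-bridges $g_k^{-1}\ldots g_{r+1}^{-1}\ldots g_k^{-1}$, the term $r=k$ contributing the isolated $q^{-k}t_k$ (resp.\ $t_k^{-1}$) summand. For each remaining sub-bridge I would commute $t_k$ (resp.\ $t_k^{-1}$) leftwards through the braiding generators via Lemma~\ref{brrel}(iii) and the interaction relations of Lemma~\ref{brlre2}; this is exactly what lowers the looping index from $k$ to $i$ while spawning the mixed-sign tails $g_k^{-1}\ldots g_{i+2}^{-1}g_{i+1}g_{i+2}\ldots g_k$ and accumulating the recorded powers of $q$. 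Alternatively one may run a direct induction on $k$ parallel to the one used for (i), peeling the outer $g_k^{-1}$.

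The main obstacle I anticipate is relation (iv). Its right-hand side splits into three separate groups of terms, and the bookkeeping of signs, of the powers of $q$, and of the factors $(q^{-1}-1)$ and $(q^{-1}-1)^2$ produced as $t_k^{-1}$ is dragged across successive sub-bridges is considerably more delicate than in (i)--(iii); in particular one must carefully track the boundary contributions (the terms indexed by $t^{-1}$ and by $t_k^{-1}$) that appear when the commutation reaches the two ends of the word. The other three relations, by contrast, reduce to a single clean induction whose arithmetic is routine once the key braid identity displayed above is in hand.
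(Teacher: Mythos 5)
Your argument for (i) coincides with the paper's proof: the same induction on $i$, splitting the central $g_i^2$ by the quadratic relation, absorbing the $q$-summand via the induction hypothesis, and rewriting the $(q-1)$-summand as $t_i\left(g_1\ldots g_{i-1}g_i^{-1}g_{i-1}^{-1}\ldots g_1^{-1}\right)$ --- the paper obtains this key identity from the definition of $t_i$ together with the commutation $t_ig_j=g_jt_i$ for $j<i$, while you obtain it from Lemma~\ref{brrel}(iii); the two justifications are equivalent. The paper proves only (i) and dismisses the rest with ``all other relations follow similarly,'' so your mirrored argument for (ii) and your sketch for (iii)--(iv) via Lemma~\ref{brrel}(v) (or a parallel induction) are consistent with, and fill in more detail than, what the paper actually records.
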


\begin{proof}

We prove relations~(i) by induction on $i$. All other relations follow similarly. For $i=1$ we have:
$g_1^2t=g_1g_1tg_1g_1^{-1}=g_1t_1g_1^{-1}=(q-1)t_1g_1^{-1}+qt$. Suppose that the relation holds for $i=n$. Then, for $i=n+1$ we have:

\smallbreak

\noindent $\left(g_1\ldots g_{n}g_{n+1}^{2}g_{n}\ldots g_1\right)\cdot t \ = \ (q-1)\left(g_1\ldots g_{n+1}g_n\ldots g_1\right)\cdot t\ + \ q\left(g_1\ldots g_{n-1}g_n^{2}g_{n-1}\ldots g_1\right)\cdot t \ =$\\

\noindent $=\ (q-1)g_1\ldots g_nt_{n+1}g_{n+1}^{-1}\ldots g_1^{-1}\ +\ q\sum_{k=1}^{n}{q^{n-k}(q-1)t_k\left(g_1\ldots g_{k-1}g_k^{-1}\ldots g_1^{-1}\right)}+q^{n+1}t\ =$\\

\noindent $=\ (q-1)t_{n+1}\left(g_1\ldots g_ng_{n+1}^{-1}\ldots g_1^{-1}\right)\ +\ \sum_{k=1}^{n}{q^{n+1-k}(q-1)t_k\left(g_1\ldots g_{k-1}g_k^{-1}\ldots g_1^{-1}\right)}+q^{n+1}t\ =$\\

\noindent $=\ \sum_{k=1}^{n+1}q^{n+1-k}(q-1)t_k\left(g_1\ldots g_{k-1}g_k^{-1}\ldots g_1^{-1}\right)+q^{n+1}t.$

\end{proof}

\subsection{Converting elements in $\Lambda^{\prime}$ to elements in $\Sigma_n$}

We are now in the position to prove a set of relations converting monomials of $t^{\prime}_i$'s to expressions containing the $t_i$'s. In \cite{D} we provide lemmas converting monomials of $t_i$'s to monomials of $t^{\prime}_i$'s in the context of giving a simple proof that the sets $\Sigma_n^{\prime}$ form bases of $\textrm{H}_{1,n}(q)$.

\begin{lemma} \label{tpr1}
The following relations hold in ${\rm H}_{1,n}(q)$ for $k \in \mathbb{N}$:
\[
\begin{array}{llll}
(i) & {t_1^{\prime}}^{-k} & = & q^k t_1^{-k} \ + \ \sum_{j=1}^{k}{q^{k-j}(q-1) t^{-j}t_1^{j-k}\cdot g_1^{-1}}, \\
&&&\\
(ii) & {t_1^{\prime}}^{k} & = & q^{-k} t_1^{k} \ + \ \sum_{j=1}^{k}{q^{-(k-j)}(q^{-1}-1) t^{j-1}t_1^{k+1-j}\cdot g_1^{-1}}. \\
\end{array}
\]
\end{lemma}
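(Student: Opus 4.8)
The plan is to establish (i) and (ii) simultaneously by induction on $k$, the two cases being driven by the same mechanism. I will rely on three facts recorded earlier. First, by the $i=1$ instance of Lemma~\ref{brlre2}~(v) we have ${t^{\prime}_1}^{\pm k}=g_1t^{\pm k}g_1^{-1}$, and in particular ${t^{\prime}_1}^{\pm 1}=g_1t^{\pm 1}g_1^{-1}$, while $t_1=g_1tg_1$. Second, the generators $t$ and $t_1$ commute, $t^{\lambda}t_1=t_1t^{\lambda}$, which is Lemma~\ref{brlre1}~(i) read through $t_1=g_1tg_1$; this lets me collect every expression into the normal form $t^{a}t_1^{b}g_1^{-1}$. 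Third, the quadratic relation~(\ref{quad}), used in the two equivalent forms $g_1=qg_1^{-1}+(q-1)$ and $g_1^{-1}=q^{-1}g_1+(q^{-1}-1)$.

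For the base case $k=1$ I would verify the two formulas by a one-line computation each. For (i) the right-hand side is $qt_1^{-1}+(q-1)t^{-1}g_1^{-1}=\bigl[qg_1^{-1}+(q-1)\bigr]t^{-1}g_1^{-1}=g_1t^{-1}g_1^{-1}={t^{\prime}_1}^{-1}$, factoring $t^{-1}g_1^{-1}$ on the right and using the quadratic relation. For (ii) the right-hand side is $q^{-1}t_1+(q^{-1}-1)t_1g_1^{-1}=g_1t\bigl[q^{-1}g_1+(q^{-1}-1)\bigr]=g_1tg_1^{-1}={t^{\prime}_1}$, this time factoring $g_1t$ on the left.

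For the inductive step of (i) I would write ${t^{\prime}_1}^{-k}={t^{\prime}_1}^{-(k-1)}\cdot{t^{\prime}_1}^{-1}$ and substitute the inductive hypothesis into the first factor. The leading term $q^{k-1}t_1^{-(k-1)}\cdot{t^{\prime}_1}^{-1}$ is handled by inserting the already-proved base case ${t^{\prime}_1}^{-1}=qt_1^{-1}+(q-1)t^{-1}g_1^{-1}$; it yields the desired $q^{k}t_1^{-k}$ together with the single summand $q^{k-1}(q-1)t^{-1}t_1^{1-k}g_1^{-1}$ after commuting $t$ past $t_1$. Each remaining term $q^{k-1-j}(q-1)t^{-j}t_1^{j-(k-1)}g_1^{-1}\cdot{t^{\prime}_1}^{-1}$ simplifies via the cancellation $g_1^{-1}{t^{\prime}_1}^{-1}=g_1^{-1}g_1t^{-1}g_1^{-1}=t^{-1}g_1^{-1}$, producing $q^{k-1-j}(q-1)t^{-(j+1)}t_1^{j-(k-1)}g_1^{-1}$; reindexing $j\mapsto j+1$ turns this into $\sum_{j=2}^{k}q^{k-j}(q-1)t^{-j}t_1^{j-k}g_1^{-1}$, which together with the $j=1$ summand from the leading term completes formula (i). Case (ii) is identical after switching $-1$ exponents to $+1$ throughout: one uses ${t^{\prime}_1}^{k}={t^{\prime}_1}^{k-1}\cdot{t^{\prime}_1}$, the cancellation $g_1^{-1}{t^{\prime}_1}=tg_1^{-1}$, and the base case ${t^{\prime}_1}=q^{-1}t_1+(q^{-1}-1)t_1g_1^{-1}$.

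There is no deep obstacle here; the argument is essentially bookkeeping. The one point requiring care is the alignment of indices and of the powers of $q$: one must check that the single summand produced by feeding the base case into the leading term is exactly the $j=1$ term missing from the shifted sum, and that the exponent $q^{k-1-j}$ becomes $q^{k-j}$ under $j\mapsto j+1$. Keeping every intermediate expression in the normal form $t^{a}t_1^{b}g_1^{-1}$ by means of the commutativity of $t$ and $t_1$ is what makes these verifications transparent.
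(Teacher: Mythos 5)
Your proof is correct and follows essentially the same route as the paper's: induction on $k$ via the factorization ${t_1^{\prime}}^{\mp k}={t_1^{\prime}}^{\mp(k-1)}\cdot{t_1^{\prime}}^{\mp 1}$, the cancellation $g_1^{-1}{t_1^{\prime}}^{\mp 1}=t^{\mp 1}g_1^{-1}$, commutation of $t$ past $t_1$, and the reindexing $j\mapsto j+1$ to absorb the extra summand. The only cosmetic difference is that you verify the base cases by factoring the right-hand side while the paper expands ${t_1^{\prime}}^{-1}=g_1t^{-1}g_1^{-1}$ directly via the quadratic relation, and you spell out case (ii) slightly more explicitly where the paper simply says it follows similarly.
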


\begin{proof}
We prove relations~(i) by induction on $k$. Relations~(ii) follow similarly. For $k=1$ we have: $ {t^{\prime}_1}^{-1}  =  \underline{g_1}\ t^{-1}\ g_1^{-1}\  =\  q\ \underline{g_1^{-1}\ t^{-1}\ g_1^{-1}}\ +\ (q-1)\ t^{-1}\ g_1^{-1}\ =\  q\ t_1^{-1}\ +\ (q-1)\ t^{-1}\ g_1^{-1}.$

\smallbreak

\noindent Suppose that the relation holds for $k-1$. Then, for $k$ we have:

\[
\begin{array}{lll}
{t_1^{\prime}}^{-k} & = & {t_1^{\prime}}^{-(k-1)} {t_1^{\prime}}^{-1}\ \overset{ind.}{{\underset{step}{=}}}\ q^{k-1}t_{1}^{-(k-1)}{t_1^{\prime}}^{-1} + \sum_{j=1}^{k-1}q^{k-1-j}(q-1)t^{-j}t_1^{j-(k-1)}g_1^{-1}{t_1^{\prime}}^{-1}\\
&&\\
& = & q^k t_1^{-k} + q^{k-1} t^{-1} t_1^{-(k-1)} g_1^{-1} + \sum_{j=1}^{k-1}q^{k-1-j}(q-1)t^{-j}t_1^{j-(k-1)}t^{-1}g_1^{-1}\\
&&\\
& = & q^k t_1^{-k} + q^{k-1}(q-1) t^{-1} t_1^{-(k-1)} g_1^{-1} + \sum_{j=1}^{k-1}q^{k-1-j}(q-1)t^{-j-1}t_1^{j-(k-1)}g_1^{-1}\\
&&\\
& = & q^k t_1^{-k} + \sum_{j=1}^{k}q^{k-j}(q-1)t^{-j}t_1^{j-k}g_1^{-1}.\\
\end{array}
\]
\end{proof}

\begin{lemma} \label{tprneg}
The following relations hold in ${\rm H}_{1,n}(q)$ for $k \in \mathbb{N}$:
$$
{t^{\prime}_k}^{-1}\ =\ q^k\ t_k^{-1}\ +\ (q-1)\ \sum_{i=0}^{k-1}{q^i\
t_i^{-1}\ (\ g_k\ g_{k-1}\ \ldots\ g_{i+2}\ g_{i+1}^{-1}\ \ldots \ g_{k-1}^{-1}\ g_k^{-1}\ )}.
$$
\end{lemma}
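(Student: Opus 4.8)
The plan is to prove the identity by induction on $k \in \mathbb{N}$, following exactly the template of the preceding lemmas (notably Lemma~\ref{tpr1} and Lemma~\ref{loopcycles1}). The key structural observation is that ${t^{\prime}_k}^{-1}$ has, by Lemma~\ref{brlre2}~(v), the closed conjugation form
\[
{t^{\prime}_k}^{-1} = g_k g_{k-1} \ldots g_1\, t^{-1}\, g_1^{-1} \ldots g_{k-1}^{-1} g_k^{-1},
\]
whereas $t_k^{-1} = g_k \ldots g_1 t^{-1} g_1 \ldots g_k$. The difference between the two is concentrated in the trailing string of inverse generators versus positive generators, so the whole computation is about systematically pushing the looping generator past a descending block of the $g_i$'s and converting the negative tail. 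For the base case $k=1$ the claimed formula reads ${t^{\prime}_1}^{-1} = q\, t_1^{-1} + (q-1) t^{-1} g_1^{-1}$, which is precisely the $k=1$ instance of Lemma~\ref{tpr1}~(i) and is therefore already established.

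For the inductive step I would write ${t^{\prime}_k}^{-1} = g_k\, {t^{\prime}_{k-1}}^{-1}\, g_k^{-1}$, which follows from the defining formula above by peeling off the outermost $g_k^{\pm 1}$. I then substitute the induction hypothesis for ${t^{\prime}_{k-1}}^{-1}$, obtaining $g_k$ times the sum $q^{k-1} t_{k-1}^{-1} + (q-1)\sum_{i=0}^{k-2} q^i t_i^{-1}(g_{k-1}\ldots g_{i+1}^{-1}\ldots g_{k-1}^{-1})$, all followed on the right by $g_k^{-1}$. The task is then to commute $g_k$ to the right through each summand and to simplify the resulting words. For the leading term, $g_k t_{k-1}^{-1} g_k^{-1}$ is handled by the quadratic relation (\ref{quad}) together with Lemma~\ref{brlre2}~(i): writing $g_k t_{k-1}^{-1} = q^{-1} t_k^{-1} g_k + (q^{-1}-1) t_{k-1}^{-1}$ and then multiplying on the right by $g_k^{-1}$ produces a $q^k t_k^{-1}$ contribution (after clearing the $g_k g_k^{-1}$) plus a correction term of the form $t_{k-1}^{-1}(\text{tail})$. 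For the summands in the sum, one uses Lemma~\ref{brlre2}~(i)/(iv) to slide $g_k$ rightward past the block $g_{k-1}\ldots g_{i+1}^{-1}\ldots g_{k-1}^{-1}$, raising each index by one where the commutation rule demands it and generating the new outermost pair $g_k \ldots g_{i+1}^{-1} \ldots g_k^{-1}$ that the target formula requires at level $k$. Collecting the corrections and re-indexing the sum from $i=0$ to $k-1$ should reproduce the right-hand side exactly, including the power $q^i$ bookkeeping.

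The main obstacle I anticipate is the careful tracking of the correction terms that the quadratic relation throws off as $g_k$ passes each generator: each pass contributes a scalar $(q^{-1}-1)$ (or $(q-1)$) times a shorter word, and these must be shown to telescope cleanly into the single geometric-type sum $(q-1)\sum_{i=0}^{k-1} q^i t_i^{-1}(\cdots)$ with no leftover terms and with the index $i=0$ term correctly picking up the bare $t^{-1}=t_0^{-1}$. In particular one must verify that the intermediate terms of the form $t_{k-1}^{-1}(\text{tail})$ arising from the leading term merge with the $i=k-1$ summand produced from the inductive sum, so that the coefficients match and the tails agree verbatim. This is a bounded, purely mechanical verification of the kind already carried out in Lemmas~\ref{loopcycles1} and \ref{loopbridge}, so while the index management is delicate, no new algebraic phenomenon arises beyond the relations already collected in Lemmas~\ref{brrel}--\ref{brlre3}.
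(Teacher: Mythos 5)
Your overall strategy coincides with the paper's: induct on $k$, write ${t^{\prime}_k}^{-1} = g_k\, {t^{\prime}_{k-1}}^{-1}\, g_k^{-1}$, substitute the induction hypothesis, commute $g_k$ past the low-index terms $t_i^{-1}$ (valid for $i\le k-2$ by Lemma~\ref{brlre2}(i)), and note that each conjugated block $g_k\left(g_{k-1}\ldots g_{i+2}\,g_{i+1}^{-1}\ldots g_{k-1}^{-1}\right)g_k^{-1}$ is verbatim the word required at level $k$, while the correction thrown off by the leading term becomes the missing $i=k-1$ summand. The base case via Lemma~\ref{tpr1}(i) is also fine.

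However, the key relation you invoke for the leading term is misquoted, and with it the arithmetic fails. You write $g_k t_{k-1}^{-1} = q^{-1} t_k^{-1} g_k + (q^{-1}-1) t_{k-1}^{-1}$; the correct relation, which is the fourth line of Lemma~\ref{brlre2}(i) and follows from $g_k^{-1}t_{k-1}^{-1} = t_k^{-1}g_k$ together with $g_k = q\,g_k^{-1}+(q-1)$, is
\[
g_k\, t_{k-1}^{-1} \;=\; q\, t_k^{-1} g_k \;+\; (q-1)\, t_{k-1}^{-1}.
\]
The $q^{-1}$-pattern you used belongs to the relation for \emph{positive} powers, $g_k t_{k-1} = q^{-1}t_k g_k + (q^{-1}-1)t_k = t_k g_k^{-1}$ (where, moreover, the correction involves $t_k$, not $t_{k-1}$); you transplanted its coefficients to the inverse case. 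As written, your leading contribution is $q^{k-1}\cdot q^{-1}\, t_k^{-1} = q^{k-2}t_k^{-1}$, not the required $q^{k}t_k^{-1}$, and your correction term is $q^{k-1}(q^{-1}-1)\,t_{k-1}^{-1}g_k^{-1} = -q^{k-2}(q-1)\,t_{k-1}^{-1}g_k^{-1}$, which cannot merge with the $i=k-1$ term $(q-1)q^{k-1}t_{k-1}^{-1}g_k^{-1}$ of the target sum, so the induction does not close. With the correct relation both problems vanish simultaneously: $q^{k-1}(q\,t_k^{-1}g_k)g_k^{-1} = q^k t_k^{-1}$, and $q^{k-1}(q-1)t_{k-1}^{-1}g_k^{-1}$ is exactly the $i=k-1$ summand. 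One smaller point: no ``sliding of $g_k$ past the block with indices raised by one'' is needed, nor is it licensed by the lemmas you cite (Lemma~\ref{brrel}(ii) slides a generator of \emph{smaller} index past a descending block, not the reverse); $g_k$ only has to commute past $t_i^{-1}$ for $i\le k-2$, after which the block conjugated by $g_k^{\pm1}$ is already in final form.
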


\begin{proof}

We prove the relations by induction on $k$. For $k=1$ we have:

\smallbreak

\noindent $ {t^{\prime}_1}^{-1}  =  \underline{g_1}\ t^{-1}\ g_1^{-1}\  =\  q\ \underline{g_1^{-1}\ t^{-1}\ g_1^{-1}}\ +\ (q-1)\ t^{-1}\ g_1^{-1}\ =\  q\ t_1^{-1}\ +\ (q-1)\ t^{-1}\ g_1^{-1}$.

\bigbreak

\noindent Suppose that the relations hold for $k=n$. Then, for $k=n+1$ we have that:

\noindent ${t^{\prime}_{n+1}}^{-1} =  g_{n+1}\ \underline{{t^{\prime}_n}^{-1}}\ g_{n+1}^{-1} \overset{ind.\ step}{=} g_{n+1} \big[q^n t_n^{-1}\ +\ (q-1) \sum_{i=0}^{n-1}{q^i\ t_i^{-1} ( g_n \ldots g_{i+2} g_{i+1}^{-1} \ldots g_n^{-1} )} \big] g_{n+1}^{-1} = $\\

\noindent $ = \ q^n\ \underline{g_{n+1}\ t_n^{-1}}\ g_{n+1}^{-1}\ +\ (q-1) \sum_{i=0}^{n-1}{q^i \underline{g_{n+1} t_i^{-1}} ( g_n \ldots g_{i+2} g_{i+1}^{-1} \ldots g_n^{-1} g_{n+1}^{-1})}\ = $\\

\noindent $ =\ q^n \big[q t_{n+1}^{-1} g_{n+1} \ +\ (q-1) t_n^{-1} \big] g_{n+1}^{-1}\ +\ (q-1) \sum_{i=0}^{n-1}{q^i t_i^{-1} ( g_{n+1} \ldots
g_{i+2} g_{i+1}^{-1} \ldots g_{n+1}^{-1})}\ = $ \\

\noindent $= \ q^{n+1} t_{n+1}^{-1}\ +\ q^{n} (q-1) t_{n}^{-1} g_{n+1}^{-1}\ +\ (q-1) \sum_{i=0}^{n-1}{q^i t_i^{-1} ( g_{n+1} \ldots g_{i+2} g_{i+1}^{-1} \ldots g_{n+1}^{-1})}\ =$ \\

\noindent $ = \ q^{n+1} t_{n+1}^{-1}\ +\ (q-1) \sum_{i=0}^{n}{q^i t_i^{-1}\ ( g_{n+1} \ldots\ g_{i+2} g_{i+1}^{-1} \ldots  g_{n+1}^{-1} )}.$

\end{proof}

\begin{lemma}\label{loops}
The following relations hold in ${\rm H}_{1,n}(q)$ for $k \in \mathbb{Z}\backslash \{0 \}$:

$$ {t_m^{\prime}}^{k}\ =\ q^{-m k}t_{m}^{k} \ +\ \sum_{i}{f_i(q) t_m^{k} w_i} \ +\ \sum_{i}{g_i(q) t^{\lambda_0}t_1^{\lambda_1}\ldots t_m^{\lambda_m}u_i}, $$

\noindent where $w_i, u_i \in {\rm H}_{m+1}(q),\ \forall i$, $\sum_{i=0}^{m}{\lambda_i}=k$ and $\lambda_i \geq 0$, if $k_m>0$ and $\lambda_i \leq 0$, if $k_m<0$.

\end{lemma}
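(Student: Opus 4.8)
My plan is to prove the identity by induction on the index $m$, in the spirit of the proof of Lemma~\ref{tprneg}, treating both signs of $k$ simultaneously. The base case $m=1$ is exactly Lemma~\ref{tpr1}: there ${t_1'}^{k} = q^{-k}t_1^{k} + \ldots$ and ${t_1'}^{-k} = q^{k}t_1^{-k}+\ldots$, whose leading coefficients $q^{-k}$ and $q^{k}$ are each of the form $q^{-m(\pm k)}$ (with $m=1$), hence invertible powers of $q$, and whose remaining terms are visibly of the two asserted shapes with all loop exponents of the sign of the power. For the inductive step I would start from ${t_m'}^{k} = g_m\,{t_{m-1}'}^{k}\,g_m^{-1}$ (which follows from $t_m' = g_m t_{m-1}' g_m^{-1}$), substitute the inductive decomposition of ${t_{m-1}'}^{k}$, and conjugate each resulting term by $g_m$.

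The leading coefficient is the one exact quantity to pin down. Conjugating the inductive leading term $q^{-(m-1)k}t_{m-1}^{k}$ gives $q^{-(m-1)k}\,g_m t_{m-1}^{k} g_m^{-1}$; applying Lemma~\ref{brlre3}(i) for $k>0$ (and the analogous negative-power relation for $k<0$) rewrites $g_m t_{m-1}^{k}$ as $q^{-(k-1)}t_m^{k}g_m^{-1}$ plus terms carrying a factor $t_{m-1}^{j}$ with $j\ge 1$. After the trailing $g_m^{-1}$, the only bare contribution comes from $q^{-(m-1)k}q^{-(k-1)}\,t_m^{k} g_m^{-2}$, and I would split off its scalar part using the quadratic relation in the form $g_m^{-2} = q^{-1} + (q^{-1}-1)g_m^{-1}$. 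This yields the bare term with coefficient $q^{-(m-1)k}\,q^{-(k-1)}\,q^{-1} = q^{-mk}$, together with a type-2 remainder $t_m^{k}g_m^{-1}$, while the $t_{m-1}^{j}$ terms conjugate into type-3 terms. The methodological key is that I would keep every braiding tail unreduced, so that the bare $t_m^{k}$ is produced only at this single, explicit place and no hidden bare contribution can arise elsewhere.

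It then remains to bring the conjugates of the inductive type-2 and type-3 terms into the normal form of Theorem~\ref{basesH}(i) and to classify them. Each such conjugate has the shape $g_m\cdot(\text{loop monomial})\cdot w\cdot g_m^{-1}$ with $w\in \textrm{H}_{m}(q)$, and I would push $g_m^{\pm1}$ past the loops using Lemmas~\ref{brlre2}, \ref{brlre3}, \ref{brlre4}, \ref{loopcycles1}, \ref{loopcycles2} and \ref{loopbridge}, invoking the positive-power versions when $k>0$ and the negative-power versions when $k<0$. These relations conserve the total loop-degree (a loop power is always replaced by loop monomials of the same degree), which forces $\sum_i \lambda_i = k$, and they never mix exponent signs, which forces every exponent to keep the sign of $k$. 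Since the loop index is never raised above $m$ — because $g_m t_m$ reduces by Lemma~\ref{brlre2}(i) to $q\,t_{m-1}g_m+(q-1)t_m$ rather than creating a $t_{m+1}$ — each reduced term lands either as $f_i(q)\,t_m^{k}w_i$ or as $g_i(q)\,t^{\lambda_0}\cdots t_m^{\lambda_m}u_i$ with $w_i,u_i\in\textrm{H}_{m+1}(q)$, exactly as claimed.

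The main obstacle is the bookkeeping in this last step: one must check that the iterated use of the conversion lemmas terminates in the loops-then-braiding normal form, that the three invariants — loop index $\le m$, uniform exponent sign, and total degree $k$ — are preserved at every move, and, most delicately, that the only source of a bare $t_m^{k}$ term is the explicit computation on the leading piece above. It is precisely the control of these invariants that guarantees both the asserted form of the two remainder sums and the invertibility of the displayed diagonal coefficient $q^{-mk}$.
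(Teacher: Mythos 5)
Your proposal is correct and follows essentially the same route as the paper's own proof: induction on $m$ via ${t_m^{\prime}}^{k}=g_m{t_{m-1}^{\prime}}^{k}g_m^{-1}$, substitution of the inductive decomposition, Lemma~\ref{brlre3} applied to the leading piece $q^{-(m-1)k}g_mt_{m-1}^{k}g_m^{-1}$, and the quadratic relation $g_m^{-2}=q^{-1}+(q^{-1}-1)g_m^{-1}$ to isolate the bare coefficient $q^{-mk}$. The paper compresses the treatment of the remaining terms to ``$+\sum+\sum$'', so your explicit bookkeeping of the degree, sign, and index invariants only spells out what the paper leaves implicit.
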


\begin{proof}
We prove relations by induction on $m$. The case $m=1$ is Lemma~\ref{tpr1}. Suppose now that the relations hold for $m-1$. Then, for $m$ we have:\\

\noindent $ {t_m^{\prime}}^{k}\ =\ g_m {t_{m-1}^{\prime}} g_m^{-1}\ \overset{ind.}{\underset{step}{=}}\ q^{-(m-1) k}\underline{g_m t_{m-1}^{k}} g_m^{-1} \ + \sum_{i}{f_i(q) \underline{g_m t_{m-1}^{k}} w_i g_m^{-1}}\ +$\\

\noindent $ +\ \sum_{i}{g_i(q) t^{\lambda_0}t_1^{\lambda_1}\ldots t_{m-2}^{\lambda_{m-2}} \underline{g_m t_{m-1}^{\lambda_{m-1}}} u_i } g_m^{-1} \ \overset{(L.4)}{=}$\\

\noindent $=\ q^{-(m-1)k}q^{-(k-1)}t_m^k \underline{g_m^{-2}}\ +\ \sum_{j=1}^{k-1}{q^{-(k-1-j)}(q^{-1}-1)t_{m-1}^jt_m^{k-j}g_m^{-1}}\ +\ \sum \ +\ \sum \ =$\\

\noindent $=\ q^{-m k}t_{m}^{k} \ +\ \sum_{i}{f_i(q) t_m^{k} w_i} \ +\ \sum_{i}{g_i(q) t^{\lambda_0}t_1^{\lambda_1}\ldots t_m^{\lambda_m}u_i}$.

\end{proof}

\noindent Using now Lemma~\ref{loops} we have that every element $u \in \Lambda^{\prime}$ can be expressed to linear combinations of elements $v_i \in \Sigma_n$, where $\exists\ j : v_j \sim u$. More precisely:

\begin{thm} \label{convert}
The following relations hold in ${\rm H}_{1,n}(q)$ for $k \in \mathbb{Z}$:
$$
t^{k_0}{t_1^{\prime}}^{k_1} \ldots {t_m^{\prime}}^{k_m} \ = \ q^{- \sum_{n=1}^{m}{nk_n}}\cdot t^{k_0}t_1^{k_1}\ldots t_m^{k_m}\ + \ \sum_{i}{f_i(q)\cdot t^{k_0}t_1^{k_1}\ldots t_m^{k_m}\cdot w_i} \ + \ \sum_{j}{g_j(q)\tau_j \cdot u_j},
$$

\noindent where $w_i, u_j \in {\rm H}_{m+1}(q), \forall i$, $\tau_j \in \Lambda$, such that $\tau_j < t^{k_0}t_1^{k_1}\ldots t_m^{k_m}, \forall i$.
\end{thm}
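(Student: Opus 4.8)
The plan is to induct on the index $m$. The base case $m=1$ is exactly Lemma~\ref{tpr1}: expanding ${t_1'}^{k_1}$ and multiplying by $t^{k_0}$ yields the leading term $q^{-k_1}t^{k_0}t_1^{k_1}$ together with terms of the shape $t^{k_0+j-1}t_1^{k_1+1-j}g_1^{-1}$. The term with $j=1$ is $(t^{k_0}t_1^{k_1})\cdot g_1^{-1}$, which lands in the middle sum, whereas for $j\geq 2$ the exponent at the top index $1$ has strictly smaller absolute value, so by Definition~\ref{order} these monomials sit strictly below the leading word and land in the third sum, with braiding tail $g_1^{-1}\in {\rm H}_2(q)$ (the negative case being symmetric via Lemma~\ref{tpr1}(i)). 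The whole inductive engine is Lemma~\ref{loops}, which lets me peel off the highest-index looping factor.

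For the inductive step I assume the statement at index $m-1$ and treat $t^{k_0}{t_1'}^{k_1}\cdots{t_{m-1}'}^{k_{m-1}}{t_m'}^{k_m}$. First I apply Lemma~\ref{loops} to the single factor ${t_m'}^{k_m}$, splitting it as $q^{-mk_m}t_m^{k_m}$ plus a sum of terms $t_m^{k_m}w_i$ and a sum of terms $t^{\lambda_0}\cdots t_m^{\lambda_m}u_i$, where $w_i,u_i\in {\rm H}_{m+1}(q)$, $\sum_l\lambda_l=k_m$, and all $\lambda_l$ carry the sign of $k_m$. Then I apply the induction hypothesis to the primed prefix $t^{k_0}{t_1'}^{k_1}\cdots{t_{m-1}'}^{k_{m-1}}$, a word of index $m-1$. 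The crucial structural fact is that $t_m^{k_m}$ commutes with every generator of ${\rm H}_m(q)=\langle g_1,\dots,g_{m-1}\rangle$ and with every $t_i$ for $i<m$, by Lemma~\ref{brlre2}(i),(iii). Hence in each product I may slide $t_m^{k_m}$ leftwards until it sits immediately after $t_{m-1}^{k_{m-1}}$, reconstituting the ordered leading word $t^{k_0}t_1^{k_1}\cdots t_m^{k_m}$ and pushing all braiding letters to the right into ${\rm H}_{m+1}(q)$. Multiplying the two leading coefficients gives $q^{-\sum_{n=1}^{m-1}nk_n}\cdot q^{-mk_m}=q^{-\sum_{n=1}^{m}nk_n}$, the asserted coefficient.

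It then remains to sort every remaining term into the middle sum (leading word times a braiding element of ${\rm H}_{m+1}(q)$) or the third sum (a monomial strictly below the leading word times such a braiding element). Two observations drive this. First, every relation used preserves the \emph{level} $\sum_i k_i$ of Definition~\ref{level}, since each $t_i$ or $t_i'$ carries $t$-exponent one and the $g_i$ carry none; so all terms produced lie at the level of the leading word. Second, because the $g_i$ with $i\le m-1$ commute with $t_m$, none of the subsequent reshuffling alters the exponent of $t_m$. The contributions from the first two pieces of Lemma~\ref{loops} therefore keep $t_m$-exponent $k_m$, and combined with the prefix's leading word give the leading word (middle sum), while combined with the prefix's strictly lower terms they give $\tau_j\,t_m^{k_m}$ with $\tau_j<t^{k_0}\cdots t_{m-1}^{k_{m-1}}$; since the ordering of Definition~\ref{order} compares exponents from the top index downward, appending the common highest factor $t_m^{k_m}$ preserves the strict inequality, so these land in the third sum.

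The main obstacle is the contribution of the third piece of Lemma~\ref{loops}: after converting the prefix I must commute the braiding tails $w_i'\in {\rm H}_m(q)$ past the unprimed monomial $t^{\lambda_0}t_1^{\lambda_1}\cdots t_m^{\lambda_m}$, which is precisely the situation governed by Lemmas~\ref{brlre2}, \ref{loopcycles1}, \ref{loopcycles2}, \ref{brlre4} and \ref{loopbridge}, each application of which may split off further lower-index loops. What rescues the argument is once more the protected top exponent: since every $g_i$ with $i\le m-1$ commutes with $t_m$, all these reshufflings leave the $t_m$-exponent equal to $\lambda_m$, and by Lemma~\ref{loops} the third-sum monomials satisfy $t^{\lambda_0}\cdots t_m^{\lambda_m}<t_m^{k_m}$, i.e. $\lambda_m$ has strictly smaller absolute value than $k_m$ (or the index has dropped below $m$). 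Hence every monomial arising here has index $<m$ or a top exponent of absolute value $<|k_m|$, so it is strictly below the leading word by Definition~\ref{order}, while its braiding tail lies in ${\rm H}_{m+1}(q)$. This exhausts all terms and completes the induction.
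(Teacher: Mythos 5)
Your proof is correct in substance and rests on the same two pillars as the paper's --- induction on $m$, with Lemma~\ref{tpr1} as base case and Lemma~\ref{loops} as the engine --- but you run the inductive step in the opposite order, and that makes a real difference in how much work is needed. The paper applies the induction hypothesis to the primed prefix \emph{first}, so the word becomes (prefix loops)$\cdot$(tail in ${\rm H}_m(q)$)$\cdot\,{t_m^{\prime}}^{k_m}$; since by Lemma~\ref{brlre2}(iv) the primed element ${t_m^{\prime}}^{k_m}$ commutes with all of ${\rm H}_m(q)$, the tails slide past it for free, and only then is Lemma~\ref{loops} applied, so every loop monomial it produces lands directly against the prefix loops and every braiding letter lands at the far right, already in the required form $\tau_j\cdot u_j$. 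In your version, expanding ${t_m^{\prime}}^{k_m}$ first leaves the (unprimed) third-sum loops $t^{\lambda_0}\cdots t_m^{\lambda_m}$ sitting to the right of the tails $w_i^{\prime},u_j^{\prime}\in{\rm H}_m(q)$ coming from the induction hypothesis, and unprimed loops do \emph{not} commute with those tails; you must push the tails through using the loop--braid interaction lemmas, which spawns many new terms. Your rescue --- level preservation plus the observation that no generator $g_i$ with $i\le m-1$ can alter the exponent of $t_m$, so the top exponent $\lambda_m$ with $|\lambda_m|<|k_m|$ is protected and every spawned monomial stays strictly below the leading word by Definition~\ref{order} --- is sound, so the proof goes through; it is simply heavier than necessary, and the commutation machinery you invoke at this stage is essentially what the paper defers to Theorems~\ref{fth} and \ref{tails}.

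One small correction: your parenthetical claim that $t^{\lambda_0}\cdots t_m^{\lambda_m}<t_m^{k_m}$ is backwards. Under Definition~\ref{order}(b)(ii)($\alpha$) (cf.\ Remark~\ref{ordgap}) the sparser monomial $t_m^{k_m}$ is the \emph{smaller} of the two. What you actually need, and what you in fact use, is the correct statement that $|\lambda_m|<|k_m|$ in every genuine third-sum term of Lemma~\ref{loops} (a term with $\lambda_m=k_m$ would have all other $\lambda_i=0$ and belong to the second sum), so that after absorbing the prefix loops the comparison with the leading word is settled at the top index by rule ($\beta$).
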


\begin{proof}
We prove relations by induction on $m$. Let $k_1 \in \mathbb{N}$, then for $m=1$ we have:\\

\noindent $t^{k_0}{t_1^{\prime}}^{k_1}\ \overset{(L.9)}{=}\ q^{-k_1}t^{k_0}t_1^{k_1}\ +\ \sum_{j=1}^{k_1}{q^{-(k_1-j)}(q^{-1}-1)t^{k_0+j-1}t_1^{k_1+1-j}g_1^{-1}}\ =$\\

\noindent $=\ q^{-k_1}t^{k_0}t_1^{k_1}\ +\ q^{-k_1}(q^{-1}-1) t^{k_0}t_1^{k_1}g_1^{-1} \ + \sum_{j=2}^{k_1}{q^{-(k_1-j)}(q^{-1}-1)t^{k_0+j-1}t_1^{k_1+1-j}g_1^{-1}}$.\\

\noindent On the right hand side we obtain a term which is the homologous word of $t^{k_0}{t_1^{\prime}}^{k_1}$ with scalar $q^{-k_1}\in \mathbb{C}$,
the homologous word again followed by $g_1^{-2}\in \textrm{H}_2(q)$ and with scalar $q^{-(k_1-1)}(q^{-1}-1) \in \mathbb{C}$ and the terms $t^{k_0+j-1}t_1^{k_1+1-j}$, which are of less order than the homologous word $t^{k_0}t_1^{k_1}$, since $k_1 > k_1+1-j$,\ for all $j \in \{2, 3, \ldots k_1 \}$. So the statement holds for $m=1$ and $k_1 \in \mathbb{N}$. The case $m=1$ and $k_1 \in \mathbb{Z} \backslash \mathbb{N}$ is similar.

\smallbreak

\noindent Suppose now that the relations hold for $m-1$. Then, for $m$ we have:

\[
\begin{array}{lcl}
t^{k_0}{t_1^{\prime}}^{k_1} \ldots {t_m^{\prime}}^{k_m} &  \overset{ind.}{\underset{step}{=}} & q^{- \sum_{n=1}^{m-1}{nk_n}}\cdot t^{k_0} \ldots t_{m-1}^{k_{m-1}}\cdot {t_m^{\prime}}^{k_m}\ + \ \sum_{i}{f_i(q)\cdot t^{k_0}t_1^{k_1}\ldots t_{m-1}^{k_{m-1}}\cdot w_i \cdot {t_m^{\prime}}^{k_m}}\\
&&\\
&+& \sum_{j}{g_j(q)\tau_j \cdot u_j \cdot {t_m^{\prime}}^{k_m}}.\\
\end{array}
\]

\noindent Now, since $w_i, u_i \in \textrm{H}_{m}(q),\ \forall i$ we have that $w_i {t_m^{\prime}}^{k_m}\ =\ {t_m^{\prime}}^{k_m} w_i$ and $u_i {t_m^{\prime}}^{k_m}\ = \ {t_m^{\prime}}^{k_m} u_i, \ \forall i$. Applying now Lemma~\ref{loops} to ${t_m^{\prime}}^{k}$ we obtain the requested relation.
\end{proof}

\begin{example}\label{eg1}\rm
We convert the monomial $t^{-1}{t_{1}^{\prime}}^{2}{t_2^{\prime}}^{-1} \in \Lambda^{\prime}$ to linear combination of elements in $\Sigma_n$. We have that:

\[
\begin{array}{lllr}
{t_{1}^{\prime}}^{2} & = & q^{-2}t_1^2+q^{-1}(q^{-1}-1)t_1^2g_1^{-1}+(q^{-1}-1)tt_1g_1^{-1},& {\rm (Lemma~\ref{tpr1}),}\\
&&&\\
{t_2^{\prime}}^{-1} & = & q^2t_2^{-1}+q(q-1)t^{-1}g_2^{-1}g_1^{-1}g_2^{-1}+q(q-1)t_1^{-1}g_2^{-1}+(q-1)^2t^{-1}g_1^{-1}g_2^{-1},& {\rm (Lemma~\ref{tprneg}),}\\
\end{array}
\]

\noindent and so:

\[
\begin{array}{lll}
t^{-1}{t_{1}^{\prime}}^{2}{t_2^{\prime}}^{-1} & = & t^{-1}t_{1}^{2}t_2^{-1}\cdot \left(1+ q^2(q^{-1}-1)g_1^{-1} \right)\ +\ t^{-2}t_1^{2} \cdot \left(q^{-1}(q-1)  g_2^{-1}g_1^{-1}g_2^{-1}\right) \ +\\
&&\\
& + & t^{-1}t_1 \cdot \left(q^{-1}(q-1)g_2^{-1}\ +\ (q-1)(q^{-1}-1)g_2^{-1}g_1^{-1}\ +\ (q-1)(q^{-1}-1)g_1^{-1}g_2^{-1} \right)\ +\\
&&\\
& + & 1 \cdot \left( -(q-1)^2 g_2^{-1}g_1^{-1} \right)\ +\ t_1t_2^{-1} \left( q^2(q^{-1}-1) g_1^{-1} \right).\\
\end{array}
\]

We obtain the homologous word $w=t^{-1}t_{1}^{2}t_2^{-1}$, the homologous word again followed by the braiding generator $g_1^{-1}$ and all other terms are of less order than $w$ since, either they contain gaps in the indices such as the term $t_1t_2^{-1}$, or their index is less than $ind(w)$ (the terms $t^{-1}t_1$, $t^{-2}t_1^{2}$, $1$).
\end{example}

\section{ From $\Sigma_n$ to $\Lambda$}

\subsection{Managing the gaps}

Before proceeding with the proof of Theorem~\ref{mainthm} we need to discuss the following situation. According to Lemma~\ref{tpr1}, for a word $w^{\prime}=t^{k}{t^{\prime}_1}^{-\lambda}\in\Lambda^{\prime}$, where $k,\lambda \in \mathbb{N}$ and $k<\lambda$ we have that:

\[
\begin{array}{lllll}
w^{\prime} & = & t^{k}{t^{\prime}_1}^{-\lambda} & = & t^{k-1}{t_1}^{-\lambda+1}\alpha_1+t^{k-2}{t_1}^{-\lambda+2}\alpha_2\ +\ \ldots \ +\ t^{k-(k-1)}{t_1}^{-\lambda+(k-1)}\alpha_{k-1}\ +\\
&&&&\\
& & & + & t^0{t_1}^{-\lambda+k}\alpha_{k}\ +\ t^{-1}{t_1}^{-\lambda+k+1}\alpha_{k+1}\ +\ \ldots \ +\ t^{-\lambda+k}\alpha_{\lambda},\\
\end{array}
\]

\noindent where $\alpha_i\in \textrm{H}_n(q),\ \forall i$. We observe that in this particular case, in the right hand side there are terms which do not belong to the set
$\Lambda$. These are the terms of the form $t_1^m$. So these elements cannot be compared with the highest order term $w\sim w^{\prime}$. The point now is that a
term $t_1^m$ is an element of the basis $\Sigma_n$ on the Hecke algebra level, but, when we are working in $\mathcal{S}({\rm ST})$, such an element must be considered up to conjugation by any braiding generator and up to stabilization moves. Topologically, conjugation corresponds to closing the braiding part of a mixed braid.
Conjugating $t_1$ by $g_1^{-1}$ we obtain $tg_1^{2}$ (view Figure~\ref{conj2}) and similarly conjugating $t_1^m$ by $g_1^{-1}$ we obtain $tg_1^2tg_1^2\ldots tg_1^2$. Then, applying Lemma~\ref{brlre2} we obtain the expression $\sum_{k=1}^{m-1}{t^kt_1^{m-k}}v_k$, where $v_k\in \textrm{H}_n(q)$, for all $k$, that is, we obtain now elements in the $\bigcup_n\textrm{H}_n(q)$-module $\Lambda$.

\begin{figure}
\begin{center}
\includegraphics[width=4.7in]{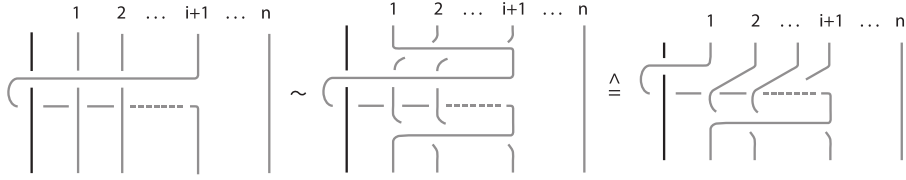}
\end{center}
\caption{Conjugating $t_i$ by $g_1^{-1}\ldots g_{i}^{-1}$.}
\label{conj2}
\end{figure}

\smallbreak

We shall next treat this situation in general. For the expressions that we obtain after appropriate conjugations we shall use the notation $\widehat{=}$. We will call {\it gaps} in monomials of the $t_i$'s, gaps occurring in the indices and \textit{size} of the gap $t_i^{k_i}t_j^{k_j}$ the number $s_{i,j}=j-i \in \mathbb{N}$.

\begin{lemma}\label{gapsimple}
For $k_0,k_1 \ldots k_i \in \mathbb{Z}$, $\epsilon = 1$ or $\epsilon = -1$ and $s_{i,j}>1$ the following relation holds in ${\rm H}_{1,n}(q)$:

$$ t^{k_0}t_1^{k_1}\ldots t_{i-1}^{k_{i-1}}t_i^{k_i}\cdot t^{\epsilon}_j \ \widehat{=} \ t^{k_0}t_1^{k_1}\ldots t_{i-1}^{k_{i-1}}t_i^{k_i}\cdot t^{\epsilon}_{i+1} \left(g^{\epsilon}_{i+2}\ldots g^{\epsilon}_{j-1}g_j^{2\epsilon}g^{\epsilon}_{j-1} \ldots g^{\epsilon}_{i+2} \right).$$

\end{lemma}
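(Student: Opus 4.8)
The claim is that a monomial ending in a looping generator $t_j^\epsilon$ whose index creates a gap (i.e.\ $j \geq i+2$, so $s_{i,j} > 1$) can, after conjugation (hence the $\widehat{=}$ notation), be rewritten so that the gap shrinks to size one: the $t_j^\epsilon$ is replaced by $t_{i+1}^\epsilon$ conjugated by a palindromic braiding word $g_{i+2}^\epsilon \ldots g_{j-1}^\epsilon g_j^{2\epsilon} g_{j-1}^\epsilon \ldots g_{i+2}^\epsilon$. The prefix $t^{k_0}\ldots t_i^{k_i}$ is inert, so the heart of the matter is a statement about a single looping generator.

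**The plan.** The natural approach is to exploit the defining formula $t_j^\epsilon = g_j g_{j-1}\ldots g_1 t^\epsilon g_1^{-1}\ldots g_{j-1}^{-1} g_j^{-1}$ (Lemma~\ref{brlre2}(v), for $\epsilon = 1$; the $\epsilon = -1$ case is the inverse) together with the fact that conjugation is a permitted move in $\mathcal{S}({\rm ST})$ by Theorem~\ref{markov}(i). First I would write $t_{i+1}^\epsilon$ explicitly as $g_{i+1}\ldots g_1 t^\epsilon g_1^{-1}\ldots g_{i+1}^{-1}$ and $t_j^\epsilon$ as $g_j\ldots g_1 t^\epsilon g_1^{-1}\ldots g_j^{-1}$. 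The key algebraic identity to establish is that, modulo conjugation by the appropriate braiding word, $t_j^\epsilon$ equals $t_{i+1}^\epsilon (g_{i+2}^\epsilon\ldots g_{j-1}^\epsilon g_j^{2\epsilon} g_{j-1}^\epsilon\ldots g_{i+2}^\epsilon)$. I expect this to reduce, via the braid relations in Lemma~\ref{brrel}(ii),(iii), to a manipulation of the segment $g_j g_{j-1}\ldots g_{i+2}$ sitting between the fixed prefix $g_{i+1}\ldots g_1 t^\epsilon g_1^{-1}\ldots g_{i+1}^{-1}$ and its inverse tail.

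**Carrying it out.** The cleanest route is to conjugate the whole monomial by a suitable braiding element so that the ``long'' loop $t_j$ is converted into the ``short'' loop $t_{i+1}$ with a residual braiding factor. Concretely, conjugating by $g_{i+2}^{-\epsilon}\ldots g_{j-1}^{-\epsilon}g_j^{-\epsilon}$ (or its inverse) and using that the prefix $t^{k_0}t_1^{k_1}\ldots t_i^{k_i}$ commutes with all $g_\ell$ for $\ell \geq i+2$ by Lemma~\ref{brlre2}(i),(iii), I would move the conjugating generators through the prefix harmlessly, then apply the braid relations to collapse $g_j\ldots g_{i+2}\cdot (\text{loop})\cdot g_{i+2}^{-1}\ldots g_j^{-1}$ into the stated palindromic form $g_{i+2}^\epsilon\ldots g_j^{2\epsilon}\ldots g_{i+2}^\epsilon$ multiplied onto $t_{i+1}^\epsilon$. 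Lemma~\ref{brrel}(iii) is precisely the tool that turns a one-sided string of braiding generators acting on a generator into the palindromic ``conjugate'' form, so this is where the $g_j^{2\epsilon}$ factor arises. An induction on the gap size $s_{i,j}$ is the tidiest packaging: the base case $s_{i,j}=2$ (i.e.\ $j = i+2$) is a direct computation, and the inductive step peels off one generator $g_j$ at a time, reducing the gap by one while accumulating the palindromic braiding tail.

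**The main obstacle.** The principal difficulty is bookkeeping of the commutations: one must verify carefully that every conjugating generator introduced can be pushed past the prefix $t^{k_0}t_1^{k_1}\ldots t_i^{k_i}$ without generating extra terms, which relies on the index separation $j \geq i+2$ guaranteeing $\ell \geq i+2 > i-1$ in Lemma~\ref{brlre2}(i). A secondary subtlety is making the $\widehat{=}$ relation rigorous: unlike the earlier lemmas which are genuine equalities in $\textrm{H}_{1,n}(q)$, this one holds only \emph{up to conjugation}, so I would be explicit that the stated rewriting is an equality of the \emph{conjugacy-class} representatives relevant to $\mathcal{S}({\rm ST})$, invoking Theorem~\ref{markov}(i), rather than an identity in the algebra itself. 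Keeping the $\epsilon$-signs consistent across the palindrome (all inner generators carry $\epsilon$, the middle carries $2\epsilon$) throughout the induction is the place where a sign error is easiest to commit, so I would track that index most carefully.
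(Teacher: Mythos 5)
Your proposal rests on an incorrect defining formula, and this is not a cosmetic slip but the point where the whole argument breaks. You write $t_j^\epsilon = g_jg_{j-1}\cdots g_1 t^\epsilon g_1^{-1}\cdots g_{j-1}^{-1}g_j^{-1}$, citing Lemma~\ref{brlre2}(v); but that formula (and that part of the lemma) defines $t'_j$, not $t_j$. By Eq.~(\ref{lgen}), $t_j = g_jg_{j-1}\cdots g_1\, t\, g_1\cdots g_{j-1}g_j$, with \emph{same-sign} braiding tails on both sides. With your formula, $t_j^\epsilon$ would literally be a conjugate of $t_{i+1}^\epsilon$ by $g_j^\epsilon\cdots g_{i+2}^\epsilon$; after pushing that word through the prefix (which does commute, as you correctly note, by Lemma~\ref{brlre2}(i)) and cycling it away under $\widehat{=}$, everything cancels and you would ``prove'' $t^{k_0}\cdots t_i^{k_i}t_j^\epsilon\ \widehat{=}\ t^{k_0}\cdots t_i^{k_i}t_{i+1}^\epsilon$ with no residual braiding word. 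That is the statement which holds for the $t'_i$'s (they are all conjugates of $t$), and it contradicts the lemma you are proving: $t_j$ is \emph{not} conjugate to $t_{i+1}$ (their closures are distinct elements of $\mathcal{S}({\rm ST})$, as one checks with the trace), and the palindromic factor records exactly this failure. Your ``collapse'' step makes the same confusion explicit: $g_j\cdots g_{i+2}\cdot(\text{loop})\cdot g_{i+2}^{-1}\cdots g_j^{-1}$ cannot collapse to the palindrome --- under conjugation it collapses to the loop alone.

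The correct starting point (and the paper's entire proof) is the identity $t_j^\epsilon = \left(g_j^\epsilon\cdots g_{i+2}^\epsilon\right) t_{i+1}^\epsilon \left(g_{i+2}^\epsilon\cdots g_j^\epsilon\right)$, obtained by splitting the defining word of Eq.~(\ref{lgen}) at index $i+1$ (the $\epsilon=-1$ case by inverting). Because the tail is \emph{not} the inverse of the head, cycling the front factor to the back by conjugation does not cancel it but concatenates it with the tail, and $\left(g_{i+2}^\epsilon\cdots g_j^\epsilon\right)\left(g_j^\epsilon\cdots g_{i+2}^\epsilon\right)$ \emph{is} the palindrome $g_{i+2}^\epsilon\cdots g_{j-1}^\epsilon g_j^{2\epsilon}g_{j-1}^\epsilon\cdots g_{i+2}^\epsilon$: the $g_j^{2\epsilon}$ comes from the two adjacent copies of $g_j^\epsilon$, not from Lemma~\ref{brrel}(iii), and no braid relations or induction on the gap size are needed. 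The ingredients of your write-up that are sound --- the commutation of the prefix with $g_\ell$ for $\ell\ge i+2$, and reading $\widehat{=}$ as equality up to conjugation justified by Theorem~\ref{markov}(i) --- are precisely the other two lines of the paper's three-line proof; what is missing is the correct splitting identity above.
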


\begin{proof}

We have that $t_j^{\epsilon}\ =\ \left(g_j^{\epsilon} \ldots g_{i+2}^{\epsilon} \right)\ t_{i+1}^{\epsilon}\ \left(g_{i+2}^{\epsilon} \ldots
g_{j}^{\epsilon} \right)$ and so:

\[
\begin{array}{lcll}
t^{k_0}t_1^{k_1}\ldots t_{i-1}^{k_{i-1}}t_i^{k_i}t_j^{\epsilon} & = & t^{k_0}t_1^{k_1}\ldots t_{i-1}^{k_{i-1}}t_i^{k_i} (g_j^{\epsilon} \ldots
 g_{i+2}^{\epsilon})\ t_{i+1}^{\epsilon}\ (g_{i+2}^{\epsilon} \ldots g_{j}^{\epsilon}) & =\\
&&&\\
& = & (g_j^{\epsilon} \ldots g_{i+2}^{\epsilon})\ t^{k_0}t_1^{k_1}\ldots t_{i-1}^{k_{i-1}}t_i^{k_i}t_{i+1}^{\epsilon} (g_{i+2}^{\epsilon} \ldots
g_{j}^{\epsilon}) & \widehat{=}\\
&&&\\
& \widehat{=} & t^{k_0}\ldots t_{i-1}^{k_{i-1}}t_i^{k_i}t_{i+1}^{\epsilon} (g_{i+2}^{\epsilon}\ldots
g_{j-1}^{\epsilon}g_j^{2{\epsilon}}g_{j-1}^{\epsilon} \ldots g_{i+2}^{\epsilon} ).&\\
\end{array}
\]

\end{proof}

In order to pass to a general way for managing gaps in monomials of $t_i$'s we first deal with gaps of size one. For this we have the following.

\begin{lemma}\label{conj}
For $k \in \mathbb{N}$, $\epsilon = 1$ or $\epsilon = -1$ and $\alpha \in {\rm H}_{1,n}(q)$ the following relations hold:

$$t_i^{\epsilon k} \cdot \alpha\ \widehat{=}\ \sum_{u=1}^{k-1}{q^{\epsilon (u-1)}(q^{\epsilon}-1)t_{i-1}^{\epsilon u}t_i^{\epsilon (k-u)} (\alpha g_i^{\epsilon})}\ +\ q^{\epsilon (k-1)}t_{i-1}^{\epsilon k} ( g_i^{\epsilon} \alpha g_i^{\epsilon}). $$
\end{lemma}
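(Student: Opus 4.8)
The symbol $\widehat{=}$ permits us to replace any word by a conjugate of it (move~(i) of Theorem~\ref{markov}), so the plan is to conjugate the whole monomial $t_i^{\epsilon k}\alpha$ by a single braiding generator and thereby reduce the statement to a purely algebraic identity in ${\rm H}_{1,n}(q)$. Treat first the case $\epsilon=1$. Conjugating by $g_i$ gives $t_i^{k}\alpha\ \widehat{=}\ g_i^{-1} t_i^{k}\alpha\, g_i$, so it suffices to establish the $\alpha$-free identity
\[
g_i^{-1} t_i^{k}\ =\ \sum_{u=1}^{k-1} q^{u-1}(q-1)\, t_{i-1}^{u} t_i^{k-u}\ +\ q^{k-1} t_{i-1}^{k} g_i \qquad (\star)
\]
and then to right-multiply $(\star)$ by $\alpha g_i$: the sum produces the terms $t_{i-1}^{u}t_i^{k-u}(\alpha g_i)$ and the last summand produces $t_{i-1}^{k}(g_i\alpha g_i)$, which is exactly the asserted expression. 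Passing through the $\alpha$-free identity first is the point that keeps the argument honest, since $\alpha$ is an arbitrary, possibly non-invertible element: we never cancel $\alpha$, we only append $\alpha g_i$ at the very end.

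I would prove $(\star)$ by induction on $k$. The definition~(\ref{lgen}) gives $t_i=g_i t_{i-1}g_i$, whence $g_i^{-1}t_i=t_{i-1}g_i$, which is precisely $(\star)$ for $k=1$ (the sum being empty). For the inductive step, write $g_i^{-1}t_i^{k}=(g_i^{-1}t_i^{k-1})\,t_i$, insert the hypothesis for $k-1$, and use that $t_{i-1}$ and $t_i$ commute (Lemma~\ref{brlre2}(iii)) together with the relation $g_it_i=q\,t_{i-1}g_i+(q-1)\,t_i$ of Lemma~\ref{brlre2}(i). Each term of the inductive sum simply absorbs one more factor $t_i$, while the boundary term $q^{k-2}t_{i-1}^{k-1}g_it_i$ splits, under that relation, into the new boundary term $q^{k-1}t_{i-1}^{k}g_i$ plus the extra summand $q^{k-2}(q-1)t_{i-1}^{k-1}t_i$; the latter is exactly the missing $u=k-1$ entry of the sum. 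This closes the induction and hence yields the $\epsilon=1$ case.

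The case $\epsilon=-1$ is the mirror image: one conjugates by $g_i^{-1}$ and reduces to
\[
g_i t_i^{-k}\ =\ \sum_{u=1}^{k-1} q^{-(u-1)}(q^{-1}-1)\, t_{i-1}^{-u} t_i^{-(k-u)}\ +\ q^{-(k-1)} t_{i-1}^{-k} g_i^{-1},
\]
proved by the same induction, starting from $g_it_i^{-1}=t_{i-1}^{-1}g_i^{-1}$ and using $g_i t_i^{-1}=q^{-1}t_{i-1}^{-1}g_i+(q^{-1}-1)t_{i-1}^{-1}$ from Lemma~\ref{brlre2}(i); right-multiplication by $\alpha g_i^{-1}$ then recovers the stated formula. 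No genuine obstacle arises in the computation itself. The only two points needing care, which I would flag explicitly, are the justification of the conjugation step (legitimate because $\widehat{=}$ records exactly the conjugation move of Theorem~\ref{markov}, and the generator $g_i$ is already present, being the generator out of which $t_i$ is built) and the bookkeeping that makes the stray term of each inductive step land precisely as the endpoint term of the sum.
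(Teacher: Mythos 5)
Your proposal is correct, and its computational core coincides with the paper's: both arguments are an induction on $k$ whose base case is the identity $t_i^{\epsilon}=g_i^{\epsilon}t_{i-1}^{\epsilon}g_i^{\epsilon}$ and whose inductive step absorbs one factor $t_i^{\epsilon}$, splitting the boundary term by means of $g_i^{\epsilon}t_i^{\epsilon}=q^{\epsilon}t_{i-1}^{\epsilon}g_i^{\epsilon}+(q^{\epsilon}-1)t_i^{\epsilon}$. The organization, however, differs. The paper inducts on the $\widehat{=}$-statement itself, keeping $\alpha$ inside the induction: it writes $t_i^{\epsilon k}\alpha=t_i^{\epsilon(k-1)}(t_i^{\epsilon}\alpha)$, sets $\beta:=t_i^{\epsilon}\alpha$, and applies the inductive hypothesis to the word $t_i^{\epsilon(k-1)}\beta$ --- legitimate precisely because the lemma is quantified over all $\alpha\in{\rm H}_{1,n}(q)$. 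You instead spend the conjugation move exactly once, at the outset, and then induct on the $\alpha$-free identity $(\star)$ inside the algebra, appending $\alpha g_i^{\epsilon}$ only at the very end. Your version makes transparent where the skein-level move enters (a single conjugation by $g_i^{\epsilon}$) and avoids any question of how $\widehat{=}$ interacts with substituting a new word into the inductive hypothesis; the paper's version is more compact, stating no auxiliary identity, at the cost of that slightly subtler use of the hypothesis with the changed word $\beta$. One small slip to repair in your $\epsilon=-1$ step: the relation actually needed at the boundary is $g_i^{-1}t_i^{-1}=q^{-1}t_{i-1}^{-1}g_i^{-1}+(q^{-1}-1)t_i^{-1}$, which is not literally the cited identity $g_it_i^{-1}=q^{-1}t_{i-1}^{-1}g_i+(q^{-1}-1)t_{i-1}^{-1}$ of Lemma~\ref{brlre2}(i); it follows from it (or from $g_it_i^{-1}=t_{i-1}^{-1}g_i^{-1}$) combined with $g_i^{-1}=q^{-1}g_i+(q^{-1}-1)$. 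Since the paper's uniform-in-$\epsilon$ write-up glosses over the same point, this is a one-line fix, not a gap.
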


\begin{proof}

We prove the relations by induction on $k$. For $k=1$ we have $t_i^{\epsilon}\cdot \alpha \ \widehat{=}\ g_i^{\epsilon}t_{i-1}^{\epsilon} g_i^{\epsilon} \cdot \alpha \ \widehat{=}\ t_{i-1}^{\epsilon} g_i^{\epsilon} \cdot \alpha \cdot g_i^{\epsilon}$. Suppose that the assumption holds for $k-1>1$. Then for $k$ we have:

\smallbreak

\noindent $t_i^{\epsilon k} \cdot \alpha\ \widehat{=}\ t_i^{\epsilon (k-1)} (t_i^{\epsilon} \cdot \alpha) \ \overset{(t_i^{\epsilon}\cdot \alpha\ =\ \beta)}{=}\ t_i^{\epsilon (k-1)} \cdot \beta\ \underset{ind.\ step}{\widehat{=}}$

\noindent $=\  \sum_{u=1}^{k-2}{q^{\epsilon (u-1)}(q^{\epsilon}-1)t_{i-1}^{\epsilon u}t_i^{\epsilon (k-1-u)} (\beta g_i^{\epsilon})}\ +\ q^{\epsilon (k-2)}t_{i-1}^{\epsilon (k-1)} ( g_i^{\epsilon} \beta g_i^{\epsilon})\ \overset{(\beta\ =\ t_i^{\epsilon}\cdot \alpha)}{=}$\\

\noindent $=\ \sum_{u=1}^{k-2}{q^{\epsilon (u-1)}(q^{\epsilon}-1)t_{i-1}^{\epsilon u}t_i^{\epsilon (k-1-u)}t_i^{\epsilon} (\alpha g_i^{\epsilon})}\ +\ q^{\epsilon (k-2)}t_{i-1}^{\epsilon (k-1)} ( g_i^{\epsilon}t_i^{\epsilon} \alpha g_i^{\epsilon})\ =$\\

\noindent $=\ \sum_{u=1}^{k-2}{q^{\epsilon (u-1)}(q^{\epsilon}-1)t_{i-1}^{\epsilon u}t_i^{\epsilon (k-u)} (\alpha g_i^{\epsilon})}\ +\ q^{\epsilon (k-2)}t_{i-1}^{\epsilon (k-1)} t_i^{\epsilon} \alpha g_i^{\epsilon}\ +\ q^{\epsilon (k-1)}t_{i-1}^{\epsilon (k-1+1)} ( g_i^{\epsilon}t_i^{\epsilon} \alpha g_i^{\epsilon})\ =$\\

\noindent $=\ \sum_{u=1}^{k-1}{q^{\epsilon (u-1)}(q^{\epsilon}-1)t_{i-1}^{\epsilon u}t_i^{\epsilon (k-u)} (\alpha g_i^{\epsilon})}\ +\ q^{\epsilon (k-1)}t_{i-1}^{\epsilon k} ( g_i^{\epsilon} \alpha g_i^{\epsilon}) $.
\end{proof}

We now introduce the following notation.

\begin{nt}\label{nt} \rm
We set $\tau_{i,i+m}^{k_{i,i+m}}:=t_i^{k_i}t^{k_{i+1}}_{i+1}\ldots t^{k_{i+m}}_{i+m}$, where $m\in \mathbb{N}$ and $k_j\neq 0$ for all $j$ and

\[
\delta_{i,j}:=\left\{\begin{matrix}
g_ig_{i+1}\ldots g_{j-1}g_{j} & if \ i<j \\
&\\
g_ig_{i-1}\ldots g_{j+1}g_{j} & if \ i>j
\end{matrix}\right. ,\ \
\delta_{i,\widehat{k},j}:=\left\{\begin{matrix}
g_ig_{i+1} \ldots g_{k-1}g_{k+1} \ldots g_{j-1}g_{j} & if \ i<j \\
&\\
g_ig_{i-1} \ldots g_{k+1}g_{k-1} \ldots g_{j+1}g_{j} & if \ i>j
\end{matrix}\right.
\]

\noindent We also set $w_{i,j}$ an element in $\textrm{H}_{j+1}(q)$ where the minimum index in $w$ is $i$.
\end{nt}

Using now the notation introduced above, we apply Lemma~\ref{conj} $s_{i,j}$-times to 1-gap monomials of the form $\tau_{0,i}^{k_{0,i}}\cdot t_j^{k_j}$ and we obtain monomials with no gaps in the indices, followed by words in ${\rm H}_n(q)$.

\begin{example} For $s_{i,j}>1$ and $\alpha \in {\rm H}_n(q)$ we have:

\[
\begin{array}{lrcl}
(i) & \tau_{0,i}^{k_i}\cdot t_j\cdot \alpha & \widehat{=} & \tau_{0,i}^{k_i}\cdot t_{i+1}\cdot \delta_{i+2,j}\ \alpha\ \delta_{j,i+2}\\
&&&\\
(ii) & \tau_{0,i}^{k_i}\cdot t^{2}_j\cdot \alpha & \widehat{=} & \tau_{0,i}^{k_i}\cdot t^{2}_{i+1}\cdot \delta_{i+2,j}\ \alpha\ \delta_{j,i+2}\ +\ \tau_{0,i}^{k_i}\cdot t_{i+1}t_{i+2}\cdot \beta,\ \rm{where}\\
&&&\\
& \beta & = & \left[(q-1)\sum_{s=i+2}^{j}{q^{j-s} \delta_{i+3,s} \delta_{i+2,s-1} \delta_{s+1,j}}\ \alpha\ \delta_{j,i+2}\delta_{s,i+3}\right]\\
&&&\\
(iii) & \tau_{0,i}^{k_i}\cdot t^{3}_j\cdot \alpha & \widehat{=} & \left[q^{j-(i+2)+1} \right]^2 \tau_{0,i}^{k_i}\cdot t^{3}_{i+1}\cdot \delta_{i+2,j}\ \alpha\ \delta_{j,i+2}\ +\ \tau_{0,i}^{k_i}\cdot t^{2}_{i+1}t_{i+2}\cdot \beta\ +\\
&&&\\
&& + &  \tau_{0,i}^{k_i}\cdot t_{i+1}t^{2}_{i+2}\cdot \gamma\ +\ \tau_{0,i}^{k_i}\cdot t_{i+1}t_{i+2}t_{i+3}\cdot \mu,\ \rm{where}\\
&&&\\
&\gamma & = & q^{j-(i+3)+1}(q-1) \delta_{i+3,j} \delta_{i+2,s-1} \delta_{s+1,j}\ \alpha\ \delta_{j,i+2} \delta_{s,i+3},\ \rm{and}\\
&&&\\
& \mu & = & \sum_{s=i+2}^{j} \sum_{r=s+1}^{j}q^{2j-r-s}\ (q-1)^2 \delta_{i+4,r} \delta_{i+2,s-1} \delta_{s+1,r-1} \delta_{r+1,j}\ \alpha\ \delta_{j,i+2} \delta_{s,i+3} \delta_{r,i+4} \\
&&&\\
&& + & \sum_{s=i+2}^{j} \sum_{r=i+3}^{s}q^{2j-r-s}\ (q-1)^2 \delta_{i+4,r} \delta_{i+3,r-1} \delta_{r+1,s} \delta_{i+2,s-1} \delta_{s+1,j}\ \alpha\ \delta_{j,i+2} \delta_{s,i+3}
\end{array}
\]

\end{example}

Applying Lemma~\ref{conj} to the one gap word $\tau^{k_{0,i}}_{0,i}\cdot t_{j}^{k_j}$, where $k_j \in \mathbb{Z}\backslash \{0\}$ and $\alpha \in {\rm H}_n(q)$ we obtain:

$$\tau^{k_{0,i}}_{0,i}\cdot t_{j}^{k_j} \alpha \widehat{=}\left\{\begin{matrix}
\sum_{\lambda}{\tau^{k_{0,i}}_{0,i}t_{i+1}^{\lambda_{i+1}}\ldots t_{i+k_j}^{\lambda_{i+k_j}}\alpha^{\prime}} & {\rm if} \ k_j<s_{i,j} \\
&\\
\sum_{\lambda}{\tau^{k_{0,i}}_{0,i}t_{i+1}^{\lambda_{i+1}}\ldots t_j^{\lambda_j}}\beta^{\prime} &  {\rm if} \ k_j \geq s_{i,j}
\end{matrix}\right. ,
$$

where $\alpha^{\prime}, \beta^{\prime} \in \textrm{H}_{n}(q)$, $\sum_{\mu=i+1}^{i+k_j}{\lambda_{\mu}}= k_j$, $\lambda_{\mu} \geq 0,\ \forall \mu$ and if $\lambda_u=0$, then $\lambda_v=0$, $\forall v \geq u$.

\smallbreak

More precisely:

\begin{lemma}\label{gap}
For the 1-gap word $A\ =\ \tau_{0,i}^{k_{0,i}}\cdot t_j^{k_j}\cdot \alpha$, where $\alpha \in {\rm H}_n(q)$ we have:
\[
\begin{array}{lllcl}
(i) & {\rm If}\ |k_j|<s_{i,j},\ {\rm then}: & A & \widehat{=} & (q^{k_j-1})^{j-(i+1)}\tau_{0,i}^{k_{0,i}}\cdot t_{i+1}^{k_j}\ \delta_{i+2,j}\ \alpha\ \delta_{j,i+2}\ +\\
&&&&\\
&&&+& \sum_{\sum k_{i+1,i+k_j}=k_j} f(q,z) \tau_{0,i}^{k_{0,i}}\cdot \tau_{i+1,i+k_j}^{k_{i+1,i+k_j}} \cdot \beta\ \alpha\ \beta^{\prime}. \\
&&&&\\
(ii) & {\rm If}\ |k_j| \geq s_{i,j},\ {\rm then}: & A & \widehat{=} & (q^{k_j-1})^{j-(i+1)}\tau_{0,i}^{k_{0,i}}\cdot t_{i+1}^{k_j}\ \delta_{i+2,j}\ \alpha\ \delta_{j,i+2}\ +\\
&&&&\\
&&&+& \sum_{\sum k_{i+1,j}=k_j} f(q) \tau_{0,i}^{k_{0,i}}\cdot \tau_{i+1,j}^{k_{i+1,j}} \cdot \beta \alpha \beta^{\prime}.\\
\end{array}
\]
\noindent where $\beta$ and $\beta^{\prime}$ are of the form $w_{i+1,j} \in {\rm H}_{j+1}(q)$ and $\sum k_{i+1,k_j} = k_j$ such that $|k_{i+1}|< |k_j|$ and if $k_{\mu}=0$, then $k_{s}=0,$ for all $s>\mu$.
\end{lemma}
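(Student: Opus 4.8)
The plan is to prove Lemma~\ref{gap} by induction on the gap size $s_{i,j}=j-i$, using Lemma~\ref{conj} as the single-step engine that peels off one unit of index at a time. The base case $s_{i,j}=1$ (equivalently $j=i+1$) is essentially the content of Lemma~\ref{conj} applied directly to $t_{i+1}^{k_j}\cdot\alpha$: the leading term $q^{\epsilon(k_j-1)}t_i^{\epsilon k_j}(g_i^\epsilon\alpha g_i^\epsilon)$ supplies the highest-index monomial while the sum over $u$ produces the lower-index correction terms. Since the exponent $j-(i+1)=0$ in this case, the scalar $(q^{k_j-1})^{j-(i+1)}$ reduces to $1$, matching the stated formula.

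For the inductive step I would assume the result for a gap of size $s-1$ and consider a word with gap $s$. First I would apply Lemma~\ref{conj} once to rewrite $t_j^{k_j}\cdot\alpha$, sliding the index from $j$ down to $j-1$ at the cost of one factor $q^{k_j-1}$ on the leading term and generating a sum of lower-exponent terms $t_{j-1}^{u}t_j^{k_j-u}$ with $u<k_j$. The leading term now has index $j-1$, so the gap relative to $\tau_{0,i}^{k_{0,i}}$ has shrunk to $s-1$, and the inductive hypothesis applies to it, accounting for the accumulation of the scalar into $(q^{k_j-1})^{j-(i+1)}$ after $j-(i+1)$ iterations. The correction terms produced along the way already have strictly smaller exponent in the top index and can be absorbed into the $\sum f(q,z)$ (resp.\ $\sum f(q)$) part of the conclusion, since repeated application of Lemma~\ref{conj} keeps the braiding tails inside $\textrm{H}_{j+1}(q)$ and of the form $w_{i+1,j}$.

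The dichotomy between cases (i) and (ii) is a bookkeeping matter about how far the index can descend: if $|k_j|<s_{i,j}$, the filling exponents $\lambda_{i+1},\ldots,\lambda_{i+k_j}$ run out before reaching index $j$, so the lower terms occupy indices only up to $i+k_j$ and the filling condition $\sum k_{i+1,i+k_j}=k_j$ with the vanishing-tail rule holds; if $|k_j|\geq s_{i,j}$, there are enough units to fill every index from $i+1$ up to $j$, giving $\sum k_{i+1,j}=k_j$. In both cases the constraint $|k_{i+1}|<|k_j|$ and the ``if $k_\mu=0$ then $k_s=0$ for $s>\mu$'' condition are preserved because Lemma~\ref{conj} strictly lowers the exponent on the term it acts on and never introduces a nonzero power above a zero power.

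The main obstacle I anticipate is not the leading-term computation but the careful tracking of the combinatorial shape of the correction terms: one must verify that every term generated by iterating Lemma~\ref{conj} can genuinely be written in the prescribed normal form $\tau_{0,i}^{k_{0,i}}\cdot\tau_{i+1,*}^{k_{i+1,*}}\cdot\beta\,\alpha\,\beta^\prime$ with the tails $\beta,\beta^\prime$ of type $w_{i+1,j}$, and that no index smaller than $i+1$ or larger than $j$ ever appears. Controlling the exact powers of $q$ and the $(q-1)$ factors (collected into the unspecified coefficient functions $f$) is routine, but confirming that the support of the exponent vectors obeys the monotone vanishing rule under composition of several applications is the delicate point; I would handle it by an auxiliary observation that Lemma~\ref{conj} maps admissible exponent patterns to admissible ones, so the rule propagates through the induction.
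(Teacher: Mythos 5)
Your proposal takes a different route from the paper --- the paper proves Lemma~\ref{gap} by induction on the exponent $k_j$, with base case $k_j=1$ given by Lemma~\ref{gapsimple}, and in the inductive step writes $t_j^{k_j}=t_j^{k_j-1}\cdot t_j$ and then pushes the single remaining $t_j$ through the braiding words via the expansion relations of Lemma~\ref{loopcycles2} before re-conjugating --- but as written your induction on the gap size $s_{i,j}$ has a genuine hole in its step. When you apply Lemma~\ref{conj} once to $\tau_{0,i}^{k_{0,i}}\cdot t_j^{k_j}\cdot\alpha$, the leading term $q^{k_j-1}\,\tau_{0,i}^{k_{0,i}}\,t_{j-1}^{k_j}\,(g_j\alpha g_j)$ does have gap $s-1$, your inductive hypothesis applies to it, and the bookkeeping composes correctly since $\delta_{i+2,j}\,\alpha\,\delta_{j,i+2}=\delta_{i+2,j-1}\,(g_j\alpha g_j)\,\delta_{j-1,i+2}$. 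But the correction terms are of the form $\tau_{0,i}^{k_{0,i}}\cdot t_{j-1}^{u}t_j^{k_j-u}\cdot(\alpha g_j)$ with $1\le u\le k_j-1$: whenever $s_{i,j}\ge 3$ these still contain a gap (of size $s-1$, between $i$ and $j-1$), and what sits after the gap is now a \emph{block} of two looping generators rather than a single power. So these terms are neither of the form demanded by the conclusion of Lemma~\ref{gap} (hence they cannot simply be ``absorbed into the $\sum f(q,z)$ part,'' as you claim), nor of the form $\tau_{0,i}^{k_{0,i}}\cdot t_{j'}^{k}\cdot\alpha'$ to which your inductive hypothesis could be applied. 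The class of words you induct over is not closed under your own reduction step; this is exactly the delicate point you flagged, but your proposed fix (admissibility of exponent patterns under Lemma~\ref{conj}) does not address it, because the problem is the residual gap, not the exponent pattern.

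The repair is either to strengthen the statement you induct on so that it allows an arbitrary block $\tau_{j,j+m}^{k_{j,j+m}}$ of consecutive indices after the gap --- i.e.\ to prove Proposition~\ref{fp} directly by your induction on gap size --- or to switch to the paper's induction on $k_j$, where only a single $t_j$ at a time must be moved through braiding words. Separately, your base case is misdirected: for $s_{i,j}=1$ (i.e.\ $j=i+1$) there is no gap and nothing to prove, and applying Lemma~\ref{conj} there is actually harmful, since sliding $t_{i+1}^{k_j}$ down to index $i$ merges it with $t_i^{k_i}$ and destroys the shape $\tau_{0,i}^{k_{0,i}}\cdot t_{i+1}^{k_j}$ that the conclusion requires. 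The first case needing an argument is $s_{i,j}=2$, where Lemma~\ref{conj} does give exactly the claimed form, because there the correction terms $\tau_{0,i}^{k_{0,i}}t_{i+1}^{u}t_{i+2}^{k_j-u}(\alpha g_{i+2})$ have consecutive indices.
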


\begin{proof}
We prove the relations by induction on $k_j$. Let $0<k_j< j-i$.

\noindent For $k_j=1$ we have $A\ \widehat{=}\ \left[q^{(1-1)}\right]^{j-(i+1)}\ \tau_{0,i}^{k_{0,i}}\cdot t_{i+1} \delta_{i+2,j}\ \alpha\ \delta_{j,i+2}$ (Lemma~\ref{gapsimple}). Suppose that the relation holds for $k_j-1>1$. Then for $k_j$ we have:

$$
\begin{array}{lllcl}
A & = & \tau_{0,i}^{k_{0,i}}\cdot t^{k_j-1}_j\cdot (t_j\ \alpha) & \underset{ind.step}{\widehat{=}}& \underset{B}{\underbrace{\left[q^{k_j-2} \right]^{j-(i+1)} \tau_{0,i}^{k_{0,i}}\cdot t_{i+1}^{k_j-1} \underline{\delta_{i+2,j}\ t_j}\ \alpha \ \delta_{j,i+2}}}\ +\\
&&&&\\
&&& + & \underset{C}{\underbrace{ \sum_{k_{i_1,i+k_j-1}} f(q,z) \tau_{0,i}^{k_{0,i}} \cdot \tau_{i+1,i+k_j-1}^{k_{i_1,i+k_j-1}} \underline{\beta \ t_j}\ \beta^{\prime}}}.\\
\end{array}
$$

\noindent We now consider $B$ and $C$ separately and apply Lemma~\ref{brlre3} to both expressions:

\[
\begin{array}{cl}
B & \overset{(L.~\ref{brlre3})}{=} \\
&\\
= & \left[q^{k_j-2} \right]^{j-(i+1)} \tau_{0,i}^{k_{0,i}}\cdot t_{i+1}^{k_j-1}\left[ (q-1) \sum_{k+i+2}^{j} q^{j-k} t_k \delta_{i+2,k-1} \delta_{k+1,j} + q^{j-(i+2)+1} t_{i+1} \delta_{i+2,j}\right] \alpha \delta_{j,i+2}\\
&\\
= & \left[q^{k_j-2} \right]^{j-(i+1)}(q-1) \tau_{0,i}^{k_{0,i}}t_{i+1}\cdot \sum_{k+i+2}^{j}q^{j-k}t_k \delta_{i+2,k-1}\delta_{k+1,j}\alpha \delta_{j,i+2}\ +\\
&\\
+ & \left[q^{k_j-1} \right]^{j-(i+1)}  \tau_{0,i}^{k_{0,i}}\cdot t_{i+1}^{k_j} \delta_{i+2,j}\alpha \delta_{j,i+2}.\\
\end{array}
\]

\noindent We now do conjugation on the $\left(j-(i+3)\right)$-one gap words that occur and since
$t_k\cdot \beta\ \widehat{=}\ t_{i+2}\cdot \delta_{i+3,k}\ \beta\ \delta_{k,i+3}$ we obtain:

\[
\begin{array}{lcl}
B & \widehat{=} & \left[q^{k_j-1} \right]^{j-(i+1)} \tau_{0,i}^{k_{0,i}}\cdot t_{i+1}^{k_j} \delta_{i+2,j}\ \alpha\ \delta_{j,i+2}\ +\\
&&\\
& + & \tau_{0,i}^{k_{0,i}} t_{i+1}t_{i+2}\sum_{k=i+2}^{j} f(q,z) \delta_{i+3,k}\delta_{i+2,k-1}\delta_{k+1,j} \alpha \delta_{j,i+2}\delta_{k,i+3}\ =\\
&&\\
& = & \left[q^{k_j-1} \right]^{j-(i+1)} \tau_{0,i}^{k_{0,i}}\cdot t_{i+1}^{k_j} \delta_{i+2,j}\ \alpha\ \delta_{j,i+2}\ +\ \tau_{0,i}^{k_i} t_{i+1}t_{i+2}\cdot \beta_1,\\
\end{array}
\]

\noindent where $\beta_1 \in \textrm{H}_{j+1}(q)$.

\smallbreak

\noindent Moreover, $C\ = \ \sum_{k_r} f(q) \tau_{0,i}^{k_{0,i}} \cdot \tau_{i+1,i+k_j-1}^{k_{i+1,i+k_j-1}} \beta \ t_j\ \beta^{\prime}$ and since $\beta\ =\ w_{i+k_j-1,j}$, we have that: $\beta\cdot t_j\ \overset{(L.~\ref{brlre3})}{=}\ \sum_{s=i+k_j-1}^{j} t_s\cdot \gamma_s$, where $\gamma_s \in \textrm{H}_{j+1}(q)$ and so: $C\ \widehat{=}\ \sum_{v_r}f(q) \tau_{0,i}^{k_{0,i}}\cdot \tau_{i+1, i+k_j}^{v_{i+1,i+k_j}}\cdot \beta_2$, where $\beta_2 \in \textrm{H}_{j+1}(q)$.

\smallbreak

This concludes the proof.
\end{proof}

We now pass to the general case of one-gap words.

\begin{prop}\label{fp}
For the $1$-gap word $B\ =\ \tau_{0,i}^{k_{0,i}} \cdot \tau_{j,j+m}^{k_{j,j+m}} \cdot \alpha$, where $\alpha \in {\rm H}_n(q)$ we have:
\[
\begin{array}{lcl}
B & \widehat{=} & \prod_{s=0}^{m}{(q^{k_{j+s}-1})^{j-(i+1)}}\cdot \tau_{0,i}^{k_{0,i}} \tau_{i+1,i+m}^{k_{j,j+m}} \\
& & \\
& \cdot & \prod_{s=0}^{m}(\delta_{i+m+2-s,j+s}) \cdot \alpha \cdot \prod_{s=0}^{m}(\delta_{j+s,i+m+2-s})\ +\\
& & \\
&+&\sum_{u_r} f(q) \tau_{0,i}^{k_{0,i}}\cdot (\tau_{i+1,i+m}^{u_{1,m}})\cdot \alpha^{\prime} \\
\end{array}
\]
\noindent where $\alpha^{\prime}\in {\rm H}_n(q)$, $\sum u_{1,m} = k_j$ such that $u_1< k_j$ and if $u_{\mu}=0$, then $u_{s}=0, \forall s>\mu$.
\end{prop}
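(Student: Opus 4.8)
The plan is to argue by induction on $m$, the base case $m=0$ -- a single looping generator $t_j^{k_j}$ in the second block -- being exactly Lemma~\ref{gap}. Since the total exponent (the level of Definition~\ref{level}) is preserved by every relation used and by $\widehat{=}$, all words produced stay in one fixed $\Lambda_k$; this is what will let me recognise the correction terms as genuinely lower order.

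For the inductive step I write $B=\tau_{0,i}^{k_{0,i}}\cdot t_j^{k_j}\cdot L\cdot\alpha$ with $L:=t_{j+1}^{k_{j+1}}\cdots t_{j+m}^{k_{j+m}}$, and first close the gap standing in front of the single leftmost power $t_j^{k_j}$, treating $L$ as a spectator. This is legitimate because the entire reduction of $t_j^{k_j}$ down to index $i+1$ involves only the braiding generators $g_{i+2}^{\pm1},\ldots,g_j^{\pm1}$ and loops of index at most $j$: by Lemma~\ref{brlre2}(i) each $g_p^{\pm1}$ with $p\le j$ commutes with every looping generator of index $\ge j+1$, and by Lemma~\ref{brlre2}(iii) loops of distinct indices commute, so $L$ stays inert while the conjugating braids slide past it and past the first block $\tau_{0,i}^{k_{0,i}}$ and are absorbed into a new braiding tail $\alpha''=\delta_{i+2,j}\,\alpha\,\delta_{j,i+2}\in{\rm H}_n(q)$. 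Running the computation of Lemma~\ref{gap} with $L$ carried along therefore gives
$$B\ \widehat{=}\ (q^{k_j-1})^{j-(i+1)}\,\tau_{0,i}^{k_{0,i}}\,t_{i+1}^{k_j}\cdot L\cdot\alpha''\ +\ \sum (\text{corrections}),$$
where each correction is $\tau_{0,i}^{k_{0,i}}$ followed by a loop-monomial on the consecutive indices $i+1,\ldots,q$ (some $q\le j$) whose $t_{i+1}$-exponent is strictly smaller than $k_j$, then $L$, then a word in ${\rm H}_n(q)$.

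The crucial observation is that every term now appearing -- the distinguished one as well as each correction -- is again a $1$-gap word whose \emph{second} block is exactly $L$, of length $m$: the loop indices are $\{0,\ldots,q\}\cup\{j+1,\ldots,j+m\}$, so the extra loops created by the reduction have joined the first block, consecutively from $0$. Hence the inductive hypothesis (length $m$) applies to all of them. Applied to the distinguished term it shifts $L$ down to $t_{i+2}^{k_{j+1}}\cdots t_{i+1+m}^{k_{j+m}}$ with coefficient $\prod_{s=1}^m(q^{k_{j+s}-1})^{j-(i+1)}$; together with the factor $(q^{k_j-1})^{j-(i+1)}$ already extracted this produces the asserted coefficient $\prod_{s=0}^m(q^{k_{j+s}-1})^{j-(i+1)}$ and the full shifted block $\tau_{0,i}^{k_{0,i}}\tau_{i+1,i+1+m}^{k_{j,j+m}}$, while the successive conjugating $\delta$-braids assemble into the nested products $\prod_{s=0}^m\delta_{i+m+2-s,\,j+s}$ and $\prod_{s=0}^m\delta_{j+s,\,i+m+2-s}$ sandwiching $\alpha$. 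Applied to the corrections it returns gap-free monomials of the form $\tau_{0,i}^{k_{0,i}}\tau_{i+1,i+1+m}^{u}\,\alpha'$ that are strictly lower than the distinguished term, and the collapse property $u_\mu=0\Rightarrow u_s=0\ (s>\mu)$ is inherited because each use of Lemma~\ref{conj} spreads an exponent only over a block of consecutive indices starting at $i+1$.

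The step I expect to be the main obstacle is the braiding bookkeeping: one must check that the $\delta$-braids produced at the outer (leftmost-loop) stage and at the inner (inductive) stage really assemble into the single nested, telescoping products claimed, keeping track of the unit index-shift that each outer conjugation imposes on the braids generated earlier. The secondary difficulty is purely combinatorial control of the corrections -- confirming that reducing the leftmost loop never opens a new internal gap and strictly lowers the leading exponent off the distinguished branch -- so that the distinguished term is cleanly isolated with its invertible coefficient $\prod_{s=0}^m(q^{k_{j+s}-1})^{j-(i+1)}$.
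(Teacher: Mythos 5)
Your proposal is correct and takes essentially the same route as the paper: the paper's own proof of Proposition~\ref{fp} is precisely this iteration --- apply Lemma~\ref{gap} to $\tau_{0,i}^{k_{0,i}}\cdot t_j^{k_j}$ with the remaining block $\tau_{j+1,j+m}^{k_{j+1,j+m}}$ carried along as a spectator, then repeat until no gap remains --- which you have formalized as an induction on $m$ with the commutation justifications (Lemma~\ref{brlre2}(i),(iii)) and the coefficient bookkeeping made explicit. Your write-up is in fact more detailed than the paper's two-sentence sketch.
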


\begin{proof}
The proof follows from Lemma~\ref{gap}. The idea is to apply Lemma~\ref{gap} on the expression $\underline{\tau_{0,i}^{k_{0,i}}\cdot t_j^{k_j}} \cdot \rho_1$, where $\rho_1\ =\ \tau_{j+1,j+m}^{k_{j+1,j+m}}$ and obtain the terms $\tau_{0,i}^{k_{0,i}}\cdot t_{i+1}^{k_j}\cdot \rho_2$ and $\tau_{0,i}^{k_{0,i}}\cdot \tau_{i+1,i+q}^{k_{i+1,i+q}} \cdot \rho_2$ and follow the same procedure until there is no gap in the word.
\end{proof}

We are now ready to deal with the general case, that is words with more than one gap in the indices of the generators.

\begin{thm}\label{fth}
For the $\phi$-gap word $C\ =\ \tau^{k_{0,i}}_{0,i}\cdot \tau^{k_{i+s_1,i+s_1+\mu_1}}_{i+s_1,i+s_1+\mu_1}\cdot \tau^{k_{i+s_2,i+s_2+\mu_2}}_{i+s_2,i+s_2+\mu_2} \ldots \tau^{k_{i+s_{\phi},i+s_{\phi}+\mu_{\phi}}}_{i+s_{\phi},i+s_{\phi}+\mu_{\phi}} \cdot \alpha$, where $k_i \in \mathbb{Z}\backslash \{0\}$ for all $i$, $\alpha \in {\rm H}_n(q)$, $s_j, \mu_j \in \mathbb{N}$, such that $s_1>1$ and $s_j>s_{j-1}+\mu_{j-1}$ for all $j$ we have:

\[
\begin{array}{lcl}
C & \widehat{=} & \prod_{j=1}^{\phi}{\left(q^{k_{i+s_j}-1} \right)^{s_j-j- \sum_{p=1}^{j-1}\mu_p}}\cdot \tau^{u_{0,i+\phi+ \sum_{p=1}^{\phi}{\mu_p}}}_{0,i+\phi+ \sum_{p=1}^{\phi}{\mu_p}}\cdot \left( \prod_{p=0}^{\phi -1}\alpha_{\phi-p} \right) \cdot \alpha \cdot \left( \prod_{p=1}^{\phi}\alpha^{\prime}_{p} \right)\ +\\
&&\\
& + &  \sum_{v} f_v(q) \tau^{k_{0,v}}_{0,v}\cdot w_v, \  \mbox{\rm where}  \\
\end{array}
\]

\begin{itemize}
\item[(i)] $\alpha_j\ =\ \prod_{\lambda_j=0}^{\mu_j}{\delta_{i+j+1+\sum_{k=1}^{j}{\mu_k}-\lambda_j,\ i+s_j+ \mu_j- \lambda_j}},\ j\ =\ \{1,2,\ldots , \phi\},$
\vspace{.1in}
\item[(ii)] $\alpha^{\prime}_j\ =\ \prod_{\lambda_j=0}^{\mu_j}{\delta_{i+j+1+\sum_{k=1}^{j-1}{\mu_k}+\lambda_j,\ i+s_j+ \lambda_j}},\ j\ =\ \{1,2,\ldots , \phi\},$
\vspace{.1in}
\item[(iii)] $\tau^{u_{0,i+\phi+ \sum_{p=1}^{\phi}{\mu_p}}}_{0,i+\phi+ \sum_{p=1}^{\phi}{\mu_p}}\ = \ \tau_{0,i}^{k_{0,i}}\cdot \prod_{j=1}^{\phi}{\tau^{k_{i+s_j,i+s_j+\mu_j}}_{i+j+\sum_{p=1}^{j-1}{\mu_p},i+j+\sum_{p=1}^{j}{\mu_p}}},$
\vspace{.1in}
\item[(iv)] $\tau^{u_{0,v}}_{0,v}< \tau^{u_{0,i+\phi+\sum_{p=1}^{\phi}\mu_p}}_{0,i+\phi+\sum_{p=1}^{\phi}\mu_p},$ for all $v$,
\vspace{.1in}
\item[(v)] $w_v$ of the form  $w_{i+2,i+s_{\phi}+\mu_{\phi}} \in {\rm H}_{i+s_{\phi}+\mu_{\phi}+1}(q)$, for all $v$,
\vspace{.1in}
\item[(vi)] the scalars $f_v(q)$ are expressions of $q\in \mathbb{C}$ for all $v$.
\end{itemize}

\end{thm}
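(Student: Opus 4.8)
The plan is to argue by induction on the number $\phi$ of gaps, using Proposition~\ref{fp} (the one‑gap case, i.e. the theorem for $\phi=1$) both as the base of the induction and as the engine of the inductive step. The guiding principle is that each gap can be closed \emph{independently} of the others: the braiding generators produced while closing one gap have bounded index, so by Lemma~\ref{brlre2}(i) they slide freely past the looping blocks lying further to the right. I would first isolate the leftmost gap by writing $C = \tau_{0,i}^{k_{0,i}}\cdot\tau_{i+s_1,i+s_1+\mu_1}^{k_{i+s_1,i+s_1+\mu_1}}\cdot X$, where $X=\tau_{i+s_2,i+s_2+\mu_2}^{k_{i+s_2,i+s_2+\mu_2}}\cdots\tau_{i+s_\phi,i+s_\phi+\mu_\phi}^{k_{i+s_\phi,i+s_\phi+\mu_\phi}}\cdot\alpha$ gathers the remaining $\phi-1$ blocks together with $\alpha$.

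Applying Proposition~\ref{fp} to the prefix and the first block shifts that block down to the consecutive position $\tau_{i+1,i+1+\mu_1}$, producing the scalar recorded in the statement (whose $q$‑exponent is the travel distance $s_1-1$ of that block), the braiding tails $\alpha_1,\alpha_1'$ built from $\delta$'s, and a sum of strictly lower terms. The crucial observation is that every $g_a$ occurring in $\alpha_1$ satisfies $a\le i+s_1+\mu_1<i+s_2$; hence by Lemma~\ref{brlre2}(i) these generators commute with every looping letter of $X$, so the left tail $\alpha_1$ may be transported rightwards, past the blocks of $X$, until it sits immediately in front of $\alpha$. After this transport the leading term has a genuinely gapless prefix $\tau_{0,i+1+\mu_1}$, is followed by the blocks $2,\dots,\phi$ (still carrying $\phi-1$ gaps), and ends in the $\mathrm H_n(q)$–factor $\alpha_1\,\alpha\,\alpha_1'$.

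I would then feed this word into the induction hypothesis, with the new gapless prefix $\tau_{0,i+1+\mu_1}$ and the new braiding factor $\alpha_1\,\alpha\,\alpha_1'$. The hypothesis packs blocks $2,\dots,\phi$ into their final consecutive slots, contributes the remaining scalars, and wraps the tails $\alpha_2,\dots,\alpha_\phi$ on the left and $\alpha_2',\dots,\alpha_\phi'$ on the right of $\alpha_1\,\alpha\,\alpha_1'$. Reconciling the resulting nested braiding words with the claimed closed forms $\prod_{p=0}^{\phi-1}\alpha_{\phi-p}$ and $\prod_{p=1}^{\phi}\alpha_p'$, and checking that the travel distance of block $j$ is exactly $s_j-j-\sum_{p=1}^{j-1}\mu_p$ (because blocks $1,\dots,j-1$ have already been packed into the positions up to $i+(j-1)+\sum_{p<j}\mu_p$), is what yields formulas (i)--(iii) together with the explicit gapless word of (iii). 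For the ordering claims (iv)--(vi) I would note that every relation used (Lemma~\ref{gapsimple}, Lemma~\ref{conj}, Lemma~\ref{gap}, Proposition~\ref{fp}) preserves the total looping exponent, so all monomials produced lie in a single level set $\Lambda_k$ of Definition~\ref{level}; the leading term is gapless of maximal index $i+\phi+\sum_{p=1}^{\phi}\mu_p$ and is therefore the maximum of $\Lambda_k$ by Definition~\ref{order} and Remark~\ref{ordgap}, while every other monomial has a strictly smaller index or a gap and so is strictly smaller, giving (iv). The uniform bounds on the braiding indices (between $i+2$ and $i+s_\phi+\mu_\phi$) give (v), and (vi) is immediate since each coefficient is a product of the scalars occurring in the cited relations.

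The main obstacle is not any single algebraic move but the index bookkeeping: verifying that the independently‑closed gaps assemble into precisely the nested tails $\alpha_j,\alpha_j'$ with the stated index ranges and the stated $q$‑powers, and rigorously justifying every commutation, i.e. confirming that no braiding generator created in closing one gap ever collides with a looping letter of a different block. Organising the induction from the leftmost gap (so that once a block is placed it is never moved again) is what keeps this bookkeeping tractable and makes the telescoping of scalars and tails transparent.
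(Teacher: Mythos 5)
Your proposal is correct and is essentially the paper's own argument: both prove Theorem~\ref{fth} by induction on the number of gaps $\phi$, with Proposition~\ref{fp} as base case and workhorse, and with Lemma~\ref{brlre2}(i) (braiding generators of index at most $i+s_{j-1}+\mu_{j-1}<i+s_j$ commute with the looping letters of later blocks) justifying the transport of the braiding tails, exactly as in your ``crucial observation.'' The only difference is the direction of peeling --- you close the \emph{first} gap with Proposition~\ref{fp} and invoke the induction hypothesis on the remaining $\phi-1$ gaps, whereas the paper applies the induction hypothesis to the first $\phi-1$ gaps and then closes the \emph{last} gap with Proposition~\ref{fp}, using the condition $s_{\phi}>s_{\phi-1}+\mu_{\phi-1}$ for the same commutation step --- so this is a mirror image of the same induction rather than a different method.
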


\begin{proof}
We prove the relations by induction on the number of gaps. For the $1$-gap word $\tau^{k_{0,i}}_{0,i}\cdot \tau^{k_{i+s,i+s+\mu}}_{i+s,i+s+\mu}\cdot \alpha$, where $\alpha \in \textrm{H}_n(q)$, we have:

$$
\begin{array}{lcl}
A & \widehat{=} & \left[ \prod_{\lambda=0}^{\mu}{\left(q^{k_{i+s+\lambda}-1} \right)^{s-1}}\right] \cdot \tau^{k_{0,i}}_{0,i}\cdot \tau^{k_{i+s,i+s+\mu}}_{i+1,i+1+\mu}\cdot \prod_{\lambda=0}^{\mu}{\delta_{i+2+\mu-\lambda, i+s+\mu - \lambda}}\cdot \alpha \cdot \prod_{\lambda=0}^{\mu}{\delta_{i+2+\mu+\lambda, i+s+ \lambda}}\ +\\
&&\\
& + & \sum_{v}{f_v(q)\cdot \tau^{u_{0,v}}_{0,v}\cdot w_v}, \mbox{ \rm which holds from Proposition~\ref{fp}.}  \\
\end{array}
$$

\smallbreak

Suppose that the relation holds for $(\phi -1)$-gap words. Then for a $\phi$-gap word we have:

\smallbreak

\noindent $\left(\underline{\tau^{k_{0,i}}_{0,i}\cdot \tau^{k_{i+s_1,i+s_1+\mu_1}}_{i+s_1,i+s_1+\mu_1}\cdot \tau^{k_{i+s_2,i+s_2+\mu_2}}_{i+s_2,i+s_2+\mu_2} \ldots \tau^{k_{i+s_{\phi -1},i+s_{\phi -1}+\mu_{\phi -1}}}_{i+s_{\phi -1},i+s_{\phi -1}+\mu_{\phi -1}}}\right) \cdot \tau^{k_{i+s_{\phi},i+s_{\phi}+\mu_{\phi}}}_{i+s_{\phi},i+s_{\phi}+\mu_{\phi}} \cdot \alpha \underset{ind.step}{\widehat{=}}$\\

\noindent $\prod_{j=1}^{\phi-1}\left(q^{k_{i+s_j}-1} \right)^{s_j-j-\sum_{k=1}^{j-1}\mu_k}\cdot \tau_{0,i+\phi-1+\sum_{k=1}^{\phi-1}\mu_k}^{u_{0,i+\phi-1+\sum_{k=1}^{\phi-1}\mu_k}} \cdot \underline{\prod_{k=0}^{\phi-2}\alpha_{\phi-1-k}\cdot \tau_{i+s_{\phi},i+s_{\phi}+\mu_{\phi}}^{k_{i+s_{\phi},i+s_{\phi}+\mu_{\phi}}}} \cdot \alpha \cdot \prod_{k=1}^{\phi-1}\alpha^{\prime}_k\ +$\\

\noindent $\sum_{v}f_v(q)\cdot \tau_{0,v}^{u_{0,v}}\cdot \underline{w \cdot \tau_{i+s_{\phi},i+s_{\phi}+\mu_{\phi}}^{k_{i+s_{\phi},i+s_{\phi}+\mu_{\phi}}}}\ \overset{s_{\phi}>s_{\phi-1} + \mu_{\phi-1}}{=}$\\

\noindent $\prod_{j=1}^{\phi-1}\left(q^{k_{i+s_j}-1} \right)^{s_j-j-\sum_{k=1}^{j-1}\mu_k}\cdot \underline{\tau_{0,i+\phi-1+\sum_{k=1}^{\phi-1}\mu_k}^{u_{0,i+\phi-1+\sum_{k=1}^{\phi-1}\mu_k}} \cdot \tau_{i+s_{\phi},i+s_{\phi}+\mu_{\phi}}^{k_{i+s_{\phi},i+s_{\phi}+\mu_{\phi}}}} \cdot \prod_{k=0}^{\phi-2}\alpha_{\phi-1-k} \cdot \alpha \cdot \prod_{k=1}^{\phi-1}\alpha^{\prime}_k\ +$\\

\noindent $\sum_{v}f_v(q)\cdot {\tau_{0,v}^{u_{0,v}}\cdot \tau_{i+s_{\phi},i+s_{\phi}+\mu_{\phi}}^{k_{i+s_{\phi},i+s_{\phi}+\mu_{\phi}}}}\cdot w \ \overset{(Prop.~\ref{fp})}{=}$\\

\noindent $\prod_{j=1}^{\phi-1}\left(q^{k_{i+s_j}-1} \right)^{s_j-j-\sum_{k=1}^{j-1}\mu_k}\cdot \prod_{p=0}^{\mu_{\phi}}\left(q^{k_{i+s_{\phi}+p}-1} \right)^{s_{\phi}-\phi-\sum_{k=1}^{\phi-1}\mu_k} \tau_{0,i+\phi-1+\sum_{k=1}^{\phi-1}\mu_k}^{u_{0,i+\phi-1+\sum_{k=1}^{\phi-1}\mu_k}} \cdot $\\

\noindent $\tau_{i+{\phi}+\sum_{k=1}^{\phi-1}{\mu_k},i+{\phi}+\sum_{k=1}^{\phi-1}{\mu_k}+\mu_{\phi}}^{k_{i+s_{\phi},i+s_{\phi}+\mu_{\phi}}} \cdot \prod_{k=0}^{\phi-1}\alpha_{\phi-1-k} \cdot \alpha \cdot \prod_{k=1}^{\phi-1}\alpha^{\prime}_k + \sum_{v}f_v(q)\cdot \underline{\tau_{0,v}^{u_{0,v}}\cdot
\tau_{i+s_{\phi},i+s_{\phi}+\mu_{\phi}}^{k_{i+s_{\phi},i+s_{\phi}+\mu_{\phi}}}}\cdot w \ \overset{(Prop.~\ref{fp})}{=}$\\

\noindent $ \left[ \prod_{\lambda=0}^{\mu}{\left(q^{k_{i+s+\lambda}-1} \right)^{s-1}}\right] \cdot \tau^{k_{0,i}}_{0,i}\cdot \tau^{k_{i+s,i+s+\mu}}_{i+1,i+1+\mu}\cdot \prod_{\lambda=0}^{\mu}{\delta_{i+2+\mu-\lambda, i+s+\mu - \lambda}}\cdot \alpha \cdot \prod_{\lambda=0}^{\mu}{\delta_{i+2+\mu+\lambda, i+s+ \lambda}}\ +\\$

\noindent $ \sum_{v}{f_v(q)\cdot \tau^{u_{0,v}}_{0,v}\cdot w_v}.$

\end{proof}

All results are best demonstrated in the following example on a word with two gaps.

\begin{example}\rm For the 2-gap word $t^{k_0}t_1^{k_1}t_3t_{5}^2t_6^{-1}$ we have:\\

\noindent $ t^{k_0}\underline{t_1^{k_1}t_3}t_{5}^2t_6^{-1}\ =\ t^{k_0}t_1^{k_1}\underline{g_3t_2g_3}t_{5}^2t_6^{-1}\ =\ g_3t^{k_0}t_1^{k_1}t_2t_{5}^2t_6^{-1}g_3 \ \widehat{=} \ t^{k_0}t_1^{k_1}\underline{t_2t_{5}^2}t_6^{-1}g_3^2\ = $\\

\noindent $=\ t^{k_0}t_1^{k_1}t_2\underline{t_{5}}t_{5}t_6^{-1}g_3^2 \ = \ t^{k_0}t_1^{k_1}t_2\underline{g_5g_4}t_3g_4g_5t_{5}t_6^{-1}g_3^2\ =\ \underline{g_5g_4}t^{k_0}t_1^{k_1}t_2t_3g_4g_5t_{5}t_6^{-1}g_3^2\ \widehat{=}\\$

\noindent $\widehat{=}\ t^{k_0}t_1^{k_1}t_2t_3\underline{g_4g_5t_{5}}t_6^{-1}g_3^2g_5g_4\ =\ t^{k_0}t_1^{k_1}t_2t_3 \left[q^2t_3g_4g_5+q(q-1)t_4g_5 \ + \ (q-1)t_5g_4  \right]t_6^{-1}g_3^2g_5g_4\ =$ \\

\noindent $=\ q^2t^{k_0}t_1^{k_1}t_2t_3^2\underline{g_4g_5t_6^{-1}}g_3^2g_5g_4\ +\ q(q-1)t^{k_0}t_1^{k_1}t_2t_3t_4\underline{g_5t_6^{-1}}g_3^2g_5g_4\ +\ (q-1)t^{k_0}t_1^{k_1}t_2t_3t_5\underline{g_4t_6^{-1}}g_3^2g_5g_4\ =$\\

\noindent $=\ q^2t^{k_0}t_1^{k_1}t_2t_3^2\underline{t_6^{-1}}g_4g_5g_3^2g_5g_4\ + \ (q-1)t^{k_0}t_1^{k_1}t_2t_3\underline{t_5}t_6^{-1}g_4g_3^2g_5g_4 \ + \ q(q-1)t^{k_0}t_1^{k_1}t_2t_3t_4\underline{t_6^{-1}}g_5g_3^2g_5g_4\ \widehat{=}$\\

\noindent $\widehat{=}\ q^2t^{k_0}t_1^{k_1}t_2t_3^2\underline{g_6^{-1}g_5^{-1}}t_4^{-1}g_5^{-1}g_6^{-1}g_4g_5g_3^2g_5g_4\ + \ q(q-1)t^{k_0}t_1^{k_1}t_2t_3t_4\underline{g_6^{-1}}t_5^{-1}g_6^{-1} g_5g_3^2g_5g_4$\\

\noindent $+\ (q-1)t^{k_0}t_1^{k_1}t_2t_3\underline{g_5}{t_4}\underline{g_5}t_6^{-1}g_4g_3^2g_5g_4 \ = \ q^2g_6^{-1}g_5^{-1}t^{k_0}t_1^{k_1}t_2t_3^2t_4^{-1}g_5^{-1}g_6^{-1}g_4g_5g_3^2g_5g_4\ +$\\

\noindent $+\ q(q-1)g_6^{-1}t^{k_0}t_1^{k_1}t_2t_3t_4t_5^{-1}g_6^{-1} g_5g_3^2g_5g_4 \ +\ (q-1){g_5}t^{k_0}t_1^{k_1}t_2t_3{t_4}t_6^{-1}{g_5}g_4g_3^2g_5g_4 \ \widehat{=}$\\

\noindent $\widehat{=}\ q^2t^{k_0}t_1^{k_1}t_2t_3^2t_4^{-1}g_5^{-1}g_6^{-1}g_4g_5g_3^2g_5g_4g_6^{-1}g_5^{-1}\ +\ q(q-1)t^{k_0}t_1^{k_1}t_2t_3t_4t_5^{-1}g_6^{-1} g_5g_3^2g_5g_4g_6^{-1}\ +$\\

\noindent $+\ (q-1)t^{k_0}t_1^{k_1}t_2t_3{t_4}\underline{t_6^{-1}}{g_5}g_4g_3^2g_5g_4{g_5} \ =\ q^2t^{k_0}t_1^{k_1}t_2t_3^2t_4^{-1}g_5^{-1}g_6^{-1}g_4g_5g_3^2g_5g_4g_6^{-1}g_5^{-1}\ +$\\

\noindent $+\ q(q-1)t^{k_0}t_1^{k_1}t_2t_3t_4t_5^{-1}g_6^{-1} g_5g_3^2g_5g_4g_6^{-1} \ +\ (q-1)t^{k_0}t_1^{k_1}t_2t_3{t_4}\underline{g_6^{-1}}t_5^{-1}g_6^{-1}{g_5}g_4g_3^2g_5g_4{g_5} \ \widehat{=}$\\

\noindent $\widehat{=}\ q^2t^{k_0}t_1^{k_1}t_2t_3^2t_4^{-1}g_5^{-1}g_6^{-1}g_4g_5g_3^2g_5g_4g_6^{-1}g_5^{-1}\ + \ q(q-1)t^{k_0}t_1^{k_1}t_2t_3t_4t_5^{-1}g_6^{-1} g_5g_3^2g_5g_4g_6^{-1} \ +$\\

\noindent $+\ (q-1)t^{k_0}t_1^{k_1}t_2t_3{t_4}t_5^{-1}g_6^{-1}{g_5}g_4g_3^2g_5g_4{g_5}{g_6^{-1}}.$

\end{example}

\subsection{Eliminating the tails}

So far we have seen how to convert elements of the basis $\Lambda^{\prime}$ to linear combinations of elements of $\Sigma_n$ and then, using conjugation, how these elements are expressed to linear combinations of elements of the $\bigcup_n\textrm{H}_n(q)$-module $\Lambda$. We will show now that using conjugation and stabilization moves all these elements of the $\bigcup_n\textrm{H}_n(q)$-module $\Lambda$ are expressed to linear combinations of elements in the set $\Lambda$ with scalars in the field $\mathbb{C}$. We will use the symbol $\simeq$ when a stabilization move is performed and $\widehat{\simeq}$ when both stabilization moves and conjugation are performed.

\smallbreak

Let us consider a generic word in $\textrm{H}_{1,n+1}(q)$. This is of the form $\tau_{0,n}^{k_{0,n}}\cdot w_{n+1}$, where $w_{n+1} \in \textrm{H}_{n+1}(q)$. Without loss of generality we consider the exponent of the braiding generator with the highest index to be $(-1)$ when the exponent of the corresponding loop generator is in $\mathbb{N}$ and $(+1)$ when the exponent of the corresponding loop generator is in $\mathbb{Z} \backslash \mathbb{N}$. We then apply Lemma~\ref{brlre2} and \ref{brlre3} in order to interact $t_n^{\pm k_n}$ with $g_n^{\mp 1}$ and obtain words of the following form:

\[
\begin{array}{llll}
(1) & \tau_{0,p}^{\lambda_{0,p}}\cdot v, & \textrm{where} & \tau_{0,p}^{\lambda_{0,p}} < \tau_{0,n}^{k_{0,n}}\ \textrm{and}\ v \in \textrm{H}_{n+1}(q)\ \textrm{of\ any\ length,\ or}\\
&&&\\
(2) & \tau_{0,q}^{k_{0,q}}\cdot u, & \textrm{where} & \tau_{0,q}^{\lambda_{0,q}} < \tau_{0,n}^{k_{0,n}}\ \textrm{and}\ u \in \textrm{H}_{n}(q)\ \textrm{such that}\ l(u)<l(w).\\
\end{array}
\]

In the first case we obtain monomials of $t_i$s of less order than the initial monomial, followed by a word in ${\rm H}_{n+1}(q)$ of any length. After at most $(k_n+1)$-interactions of $t_n$ with $g_n$, the exponent of $t_n$ will become zero and so by applying a stabilization move we obtain monomials of $t_i$s of less index, and thus of less order (Definition~\ref{order}), followed by a word in ${\rm H}_{n}(q)$.

In the second case, we have monomials of $t_i$s of less order than the initial monomial followed by words $u \in \textrm{H}_{n}(q)$ such that $l(u) < l(w)$. We interact the generator with the maximum index of $u$, $g_m$ with the corresponding loop generator until the exponent of $t_m$ becomes zero. A gap in the indices of the monomials of the $t_i$s occurs and we apply Theorem~\ref{fth}. This leads to monomials of $t_i$s of less order followed by words of the braiding generators of any length. We then apply stabilization moves and repeat the same procedure until the braiding `tails' are eliminated.

\begin{thm} \label{tails}
Applying conjugation and stabilization moves on a word in the $\bigcup_{\infty}{\rm H}_n(q)$-module, $\Lambda$ we have that:

$$\tau_{0,m}^{k_{0,m}}\cdot w_n\ \widehat{\simeq}\ f(q,z)\cdot \sum_{j}{f_j(q,z)\cdot \tau_{0,u_j}^{v_{0,u_j}}},$$

\noindent such that $\sum{v_{0,u_j}}=\sum{k_{0,m}}$ and $\tau_{0,u_j}^{v_{0,u_j}} < \tau_{0,m}^{k_{0,m}}$, for all $j$.
\end{thm}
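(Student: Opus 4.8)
The plan is to prove the statement by a well-founded induction organized around the well-ordering of the level sets $\Lambda_k$. First I would fix the level $k=\sum k_{0,m}$ and observe that every elementary move used below---sliding a looping generator $t_m^{\pm 1}$ past a braiding generator via Lemma~\ref{brlre2} and Lemma~\ref{brlre3}, resolving an index-gap via Theorem~\ref{fth}, or performing a stabilization move---leaves the total exponent of the looping generators unchanged. Indeed, in each identity of Lemma~\ref{brlre2}(ii) the monomials $t_{n-1}^j t_n^{k-j}$ and $g_n t_{n-1}^k$ all carry looping-exponent sum $k$, exactly that of $t_n^k$, and the same holds for Lemma~\ref{brlre3} and for the conclusion of Theorem~\ref{fth}. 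This immediately yields the asserted equality $\sum v_{0,u_j}=\sum k_{0,m}$ and confines the entire computation to the single set $\Lambda_k$, which has been shown to be well-ordered. It therefore suffices to show that, up to the scalar factors $f(q,z)$, every term produced is a looping monomial strictly smaller than $\tau_{0,m}^{k_{0,m}}$ in the ordering of Definition~\ref{order}, possibly still carrying a shorter braiding tail that is handled recursively.

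Next I would carry out the reduction step following the two cases already isolated before the statement. Writing the generic word as $\tau_{0,n}^{k_{0,n}}\cdot w_{n+1}$ with $w_{n+1}\in\textrm{H}_{n+1}(q)$, I normalise the braiding generator of highest index in $w_{n+1}$ to appear with exponent $-1$ (resp.\ $+1$) when the corresponding looping exponent is positive (resp.\ negative); this sign convention is chosen precisely so that sliding $t_n^{\pm k_n}$ against $g_n^{\mp 1}$ via Lemma~\ref{brlre2} and Lemma~\ref{brlre3} strictly lowers the top looping exponent in almost every emitted term, hence lowers the order by Definition~\ref{order}(b)(ii). In case (1) the output is $\tau_{0,p}^{\lambda_{0,p}}\cdot v$ with $\tau_{0,p}^{\lambda_{0,p}}<\tau_{0,n}^{k_{0,n}}$: driving the exponent of $t_n$ to zero after at most $k_n+1$ interactions lets a stabilization move ($\widehat{\simeq}$) erase the top strand, strictly lowering the index and hence, by Definition~\ref{order}(b)(i), the order. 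In case (2) the output is $\tau_{0,q}^{k_{0,q}}\cdot u$ with a strictly shorter tail $u\in\textrm{H}_n(q)$, $l(u)<l(w)$ in the sense of Definition~\ref{length}; interacting the highest-index generator of $u$ with its looping generator drives that exponent to zero but creates a gap, which I absorb with Theorem~\ref{fth}, then stabilize and repeat.

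The main obstacle is termination, because the rewriting is not obviously well-founded: resolving a gap through Theorem~\ref{fth} re-introduces (indeed can lengthen) braiding tails and emits a ``filled-in'' leading looping monomial of consecutive indices, which by Remark~\ref{ordgap} is \emph{larger} than the gapped monomial it replaces. The plan to control this is a lexicographic measure whose dominant component is the order of the leading looping monomial in the well-ordered set $\Lambda_k$, and whose secondary component is the tail length $l(w)$. I would verify that each move strictly decreases this measure: every term with a strictly smaller leading looping monomial is handled by the outer well-founded induction on $\Lambda_k$, while the residual terms that retain the leading looping monomial---those in which a braiding generator is merely detached without altering the looping part---have strictly shorter tails and are handled by an inner induction on $l(w)$ at fixed leading monomial. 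The delicate point to check here is that the leading filled-in term produced by Theorem~\ref{fth}, after the ensuing stabilization removes its top strand, descends strictly below the monomial that entered that step; granting this, the absence of infinite descending chains in $\Lambda_k$ together with $l(w)\in\mathbb{N}$ makes the double induction well-founded and the reduction tree finite.

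Finally I would record that all scalars accumulated along the way---powers of $q$ from Lemma~\ref{brlre2}, Lemma~\ref{brlre3} and Theorem~\ref{fth}, together with the factors $z$ introduced by each stabilization move---assemble into the coefficients $f(q,z)$ and $f_j(q,z)$ of the statement, and that, as established case by case above, every surviving looping monomial satisfies $\tau_{0,u_j}^{v_{0,u_j}}<\tau_{0,m}^{k_{0,m}}$ while remaining at level $k$. Since the terminal words of the reduction have empty braiding tails, they are genuine elements of $\Lambda$, which is exactly the assertion of the theorem.
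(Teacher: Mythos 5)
Your global strategy---fix the level $k$, observe that every move preserves it, and run a double induction whose components are the position of the looping monomial in the well-ordered set $\Lambda_k$ and the tail length $l(w)$---is essentially the paper's own proof skeleton (the paper inducts simultaneously on $l(w_n)$ and on the order of $\tau_{0,m}^{k_{0,m}}$ in $\Lambda$), and your level-preservation observation matches the remark at the start of Section~5. The genuine gap is in your third paragraph, exactly at the point you yourself call delicate. You claim, citing Remark~\ref{ordgap}, that resolving a gap through Theorem~\ref{fth} emits a ``filled-in'' leading looping monomial that is \emph{larger} than the gapped monomial it replaces, and you then need---but only ``grant''---that after the ensuing stabilization this term descends strictly below the monomial that entered the step. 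The cited claim is false: Remark~\ref{ordgap} compares a gapped monomial with a consecutive-index monomial of the \emph{same} index, whereas Theorem~\ref{fth} \emph{collapses} the gaps, replacing (say) $\tau_{0,i}^{k_{0,i}}\,\tau_{j,j+m}^{k_{j,j+m}}$, of index $j+m$, by $\tau_{0,i}^{k_{0,i}}\,\tau_{i+1,i+1+m}^{k_{j,j+m}}$, of index $i+1+m<j+m$ (Theorem~\ref{fth}(iii)); by Definition~\ref{order}(b)(i) the filled-in monomial is therefore strictly \emph{smaller}, and the remaining terms are smaller still by Theorem~\ref{fth}(iv). So the obstacle your lexicographic measure is designed to overcome does not exist in the form you describe; but since the well-foundedness of your reduction hinges on an inequality you never establish (and whose stated justification is incorrect), the proof as written is incomplete at its central step.

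What a complete argument needs, and what the paper supplies, is an explicit reduction step with verified descent. The paper writes the tail as $w=w^{\prime}\,g_r^{-1}\,\delta_{r-1,d}$ with $g_r$ its highest braiding generator, peels a single factor $t_r$ off the looping monomial (legitimate since $t_r$ commutes with $w^{\prime}\in{\rm H}_r(q)$), uses $t_rg_r^{-1}=g_rt_{r-1}$ and then Lemma~\ref{brlre4} to push $t_{r-1}$ through $\delta_{r-1,d}$, and finally conjugates. Every emitted term has looping part $\tau_{0,r-1}^{k_{0,r-1}}t_r^{k_r-1}\tau_{r+1,m}^{k_{r+1,m}}\,t_{r-1-j}$ or $\tau_{0,r-1}^{k_{0,r-1}}t_r^{k_r-1}\tau_{r+1,m}^{k_{r+1,m}}\,t_{d-1}$, which is strictly smaller than $\tau_{0,m}^{k_{0,m}}$, with tail of length $l$ or $l+1$ respectively, so both clauses of the double induction hypothesis apply verbatim and no appeal to an unproven inequality is needed. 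If you replace your granted step either by this computation or by the correct order comparison for Theorem~\ref{fth} indicated above, your argument closes.
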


The logic for the induction hypothesis is explained above. We shall now proceed with the proof of the theorem.

\begin{proof}
We prove the statement by double induction on the length of $w_n\in \textrm{H}_n(q)$ and on the order of $\tau_{0,m}^{k_{0,m}}\in \Lambda$, where order of $\tau_{0,m}^{k_{0,m}}$ denotes the position of $\tau_{0,m}^{k_{0,m}}$ in $\Lambda$ with respect to total-ordering.

\smallbreak

For $l(w)=0$, that is for $w=e$ we have that $\tau_{0,m}^{k_{0,m}}\ \widehat{\simeq}\ \tau_{0,m}^{k_{0,m}}$ and there's nothing to show. Moreover, the minimal element in the set $\Lambda$ is $t^k$ and for any word $w\in \textrm{H}_n(q)$ we have that $t^k \cdot w\ \simeq\ f(q,z)\cdot t^k$, by the quadratic relation and stabilization moves.

\smallbreak

Suppose that the relation holds for all $\tau_{0,p}^{u_{0,p}}\cdot w^{\prime}$, where $\tau_{0,p}^{u_{0,p}} \leq \tau_{0,m}^{k_{0,m}}$ and $l(w^{\prime})=l$, and for all $\tau_{0,q}^{v_{0,q}}\cdot w$, where $\tau_{0,q}^{v_{0,q}} < \tau_{0,m}^{k_{0,m}}$ and $l(w)=l+1$. We will show that it holds for $\tau_{0,m}^{k_{0,m}}\cdot w$. Let the exponent of $t_r$, $k_r \in \mathbb{N}$ and let $w\in \textrm{H}_{r+1}(q)$. Then, $w$ can be written as $w^{\prime} \cdot g_r^{-1} \cdot \delta_{r-1,d}$, where $w^{\prime} \in \textrm{H}_{r}(q)$ and $d<r$. We have that:

\[
\begin{array}{lcl}
\tau_{0,m}^{k_{0,m}}\cdot w & = & \tau_{0,r-1}^{k_{0,r-1}} t_r^{k_r-1} \tau_{r+1,m}^{k_{r+1,m}} \cdot w^{\prime} \cdot t_r g_r^{-1} \delta_{r-1,d}\ =\\
&&\\
& = & \tau_{0,r-1}^{k_{0,r-1}} t_r^{k_r-1} \tau_{r+1,m}^{k_{r+1,m}} \cdot w^{\prime} \cdot g_r \underline{t_{r-1} \delta_{r-1,d}}\ \overset{L.~6}{=}\\
&&\\
& = & \tau_{0,r-1}^{k_{0,r-1}} t_r^{k_r-1} \tau_{r+1,m}^{k_{r+1,m}} \cdot w^{\prime} \cdot g_r\\
&&\\
&& \cdot  \left( \sum_{j=0}^{r-1-d}{q^j(q-1)\delta_{r-1,\widehat{r-1-j},d}t_{r-1-j}}\ +\ q^{l(\delta_{r-1,d})}\delta_{r-1,d}t_{d-1} \right) \widehat{=}\\
&&\\
& \widehat{=} & \sum_{j=0}^{r-1-d}{q^j(q-1) \tau_{0,r-1}^{k_{0,r-1}} t_r^{k_r-1} \tau_{r+1,m}^{k_{r+1,m}}\cdot t_{r-1-j}}\cdot w^{\prime}\cdot g_r \delta_{r-1,\widehat{r-1-j},d}\ +\\
&&\\
& + & q^{l(\delta_{r-1,d})} \tau_{0,r-1}^{k_{0,r-1}} t_r^{k_r-1} \tau_{r+1,m}^{k_{r+1,m}}\cdot t_{d-1} \cdot w.\\
\end{array}
\]

\bigbreak

We have that $\left(\tau_{0,r-1}^{k_{0,r-1}} t_r^{k_r-1} \tau_{r+1,m}^{k_{r+1,m}}\cdot t_{r-1-j}\right) < \left(t_{0,m}^{k_{0,m}} \right)$, for all $j\in \{1,2,\ldots r-1-d \}$ and $l\left(w^{\prime}\cdot g_r \delta_{r-1,\widehat{r-1-j},d}\right)=l$ and $ \left( \tau_{0,r-1}^{k_{0,r-1}} t_r^{k_r-1} \tau_{r+1,m}^{k_{r+1,m}}\cdot t_{d-1}\right) < \left(t_{0,m}^{k_{0,m}} \right)$. So, by the induction hypothesis, the relation holds.

\end{proof}

\begin{example}\rm In this example we demonstrate how to eliminate the braiding `tail' in a word in $\Sigma_n$.

\[
\begin{array}{lcl}
t^{-1}\underline{t_1^2}t_{2}^{-1}g_1^{-1} & = & t^{-1}t_1t_{2}^{-1}\underline{t_1g_1^{-1}}\ = \ t^{-1}t_1t_{2}^{-1} g_1 \underline{t} \ \widehat{=} \ \underline{t_1}t_{2}^{-1} g_1\ =\ t_{2}^{-1}\underline{t_1 g_1}\ =\\
&&\\
& = &  (q-1)  \underline{t_1}t_{2}^{-1}\ +\ q t_{2}^{-1} g_1 \underline{t}\ \widehat{=}\ (q-1) t\underline{t_2^{-1}}g_1^{2}\ +\ qt \underline{t_{2}^{-1}} g_1\ =\\
&&\\
& = & (q-1) tt_1^{-1}g_2^{-1}g_1^{2}g_2^{-1}\ +\ q tt_1^{-1}g_2^{-1}g_1g_2^{-1}.\\
\end{array}
\]

\noindent We have that:

\[
\begin{array}{lcl}
g_2^{-1}g_1g_2^{-1} & = & q^{-2}g_1g_2g_1\ +\ q^{-1}(q^{-1}-1)g_2g_1\ +\  q^{-1}(q^{-1}-1) g_1g_2\ +\ (q^{-1}-1)^2g_1,\\
&&\\
g_2^{-1}g_1^2g_2^{-1} & = & q^{-2}(q-1)g_1g_2g_1\ -\ (q^{-1}-1)^2g_2g_1\ -\ (q^{-1}-1)^2 g_1g_2\ +\ (q-1)(q^{-1}-1)^2g_1\\
&&\\
& + & q(q^{-1}-1)g_2^{-1}\ +\ 1,\\
\end{array}
\]

\noindent and so

\[
\begin{array}{rcl}
(q-1) tt_1^{-1}g_2^{-1}g_1^{2}g_2^{-1} & \widehat{\simeq} & \left( (q-1)+ q^{-1}(q-1)^3 \right)\cdot tt_1^{-1}\ -\ q^{-3}(q^{-1}-1)^3z^2\cdot 1\ +\\
&&\\
& + & 3q^{-3}(q-1)^4z\cdot 1\ - \ q^{-1}(q-1)^2z\cdot 1\ -\ q^{-3}(q-1)^5\cdot 1,\\
&&\\
q tt_1^{-1}g_2^{-1}g_1g_2^{-1} & \widehat{\simeq} & z \cdot tt_1^{-1}\ +\ q^{-1}(q^{-1}-1)z^2\cdot 1\ +\ 2(q^{-1}-1)^2z\cdot 1\ +\ q(q^{-1}-1)^3\cdot 1.\\
\end{array}
\]

\end{example}

\section{The basis $\Lambda$ of $\mathcal{S}({\rm ST})$}

In this section we shall show that the set $\Lambda$ is a basis for $\mathcal{S}({\rm ST})$, given that $\Lambda^{\prime}$ is a basis of $\mathcal{S}({\rm ST})$. This is done in two steps:

\smallbreak

$\bullet$ We first relate the two sets $\Lambda$ and $\Lambda^{\prime}$ via an infinite lower triangular matrix with invertible elements in the diagonal. Since $\Lambda^{\prime}$ is a basis for $\mathcal{S}({\rm ST})$, the set $\Lambda$ spans $\mathcal{S}({\rm ST})$.

\smallbreak

$\bullet$ Then, we prove that the set $\Lambda$ is linear independent and so we conclude that $\Lambda$ forms a basis for $\mathcal{S}({\rm ST})$.

\subsection{The infinite matrix}

With the orderings given in Definition~\ref{order} we shall show that the infinite matrix converting elements of the basis $\Lambda^{\prime}$ to elements of the set $\Lambda$ is a block diagonal matrix, where each block is an infinite lower triangular matrix with invertible elements in the diagonal. Note that applying conjugation and stabilization moves on an element of some $\Lambda_k$ followed by a braiding part won't alter the sum of the exponents of the loop generators and thus, the resulted terms will belong to the set of the same level $\Lambda_k$. Fixing the level $k$ of a subset of $\Lambda^{\prime}$, the proof of Theorem~\ref{mainthm} is equivalent to proving the following claims:

\smallbreak

\begin{itemize}
\item[(1)] A monomial $w^{\prime} \in \Lambda_k^{\prime} \subseteq \Lambda^{\prime}$ can be expressed as linear combinations of elements of $\Lambda_k \subseteq \Lambda$, $v_i$, followed by monomials in $\textrm{H}_n(q)$, with scalars in $\mathbb{C}$ such that $\exists \ j: v_j=w\sim w^{\prime}$.
\smallbreak
\item[(2)] Applying conjugation and stabilization moves on all $v_i$'s results in obtaining elements in $\Lambda_k$, $u_i$'s, such that $u_i < v_i$ for all $i$.
\smallbreak
\item[(3)] The coefficient of $w$ is an invertible element in $\mathbb{C}$.
\smallbreak
\item[(4)] $ \Lambda_{k} \ni w < u \in \Lambda_{k+1}$.
\end{itemize}

\bigbreak

Indeed we have the following: Let $w^{\prime} \in S^{\prime}_k \subseteq \Lambda^{\prime}$. Then, by Theorem~7 the monomial $w^{\prime}$ is expressed to linear combinations of elements of $\Sigma_n$, where the only term that isn't followed by a braiding part is the homologous monomial $w \in \Lambda$. Other terms in the linear combinations involve lower order terms than $w$ (with possible gaps in the indices) followed by a braiding part and words of the form $w \cdot \beta$, where $\beta \in \textrm{H}_n(q)$. Then, by Theorem~8 elements of $\Sigma_n$ are expressed to linear combinations of elements of the $\textrm{H}_{n}(q)$-module $\Lambda$ (regularizing elements with gaps) and obtaining words which are of less order than the initial word $w$. In Theorem~9 all elements who are followed by a braiding part are expressed as linear combinations of elements of $\Lambda$ with coefficients in $\mathbb{C}$. It is essential to mention that when applying Theorem~9 to a word of the form $w\cdot \beta$ one obtains elements in $\Lambda$ that are less ordered that $w$. Thus, we obtain a lower triangular matrix with entries in the diagonal of the form $q^{-A}$ (see Theorem~7), which are invertible elements in $\mathbb{C}$. The fourth claim follows directly from Definition~\ref{order}.

\smallbreak

If we denote as $[\Lambda_k]$ the block matrix converting elements in $\Lambda^{\prime}_k$ to elements in $\Lambda_k$ for some $k$, then the change of basis matrix will be of the form:

$$
S=\left[
\begin{array}{ccccccc}
\ddots & 0 & 0 & 0 & 0 & 0 &  \\
 & [\Lambda_{k-2}] & 0 & 0 & 0 & 0 &  \\
& 0 & [\Lambda_{k-1}] & 0 & 0 & 0 &  \\
& 0 & 0 & [\Lambda_{k}] & 0 & 0 &  \\
& 0 & 0 & 0 & [\Lambda_{k+1}] & 0 &  \\
& 0 & 0 & 0 & 0 & [\Lambda_{k+2}] &  \\
& 0 & 0 & 0 & 0 & 0 & \ddots \\
\end{array}\right]
$$

\smallbreak

\begin{center}
The infinite block diagonal matrix
\end{center}

\subsection{Linear independence of $\Lambda$}

Consider an arbitrary subset of $\Lambda$ with finite many elements $\tau_1, \tau_2, \ldots, \tau_k$. Without loss of generality we consider $\tau_1 < \tau_2 < \ldots < \tau_k$ according to Definition~\ref{order}. We convert now each element $\tau_i \in \Lambda$ to linear combination of elements in $\Lambda^{\prime}$ according to the infinite matrix. We have that

$$\tau_i\ \widehat{\simeq}\ A_i \tau_i^{\prime}\ +\ \sum_{j}A_j \tau_j^{\prime}\ ,$$

\noindent where $\tau_i^{\prime} \sim \tau_i$, $A_i \in \mathbb{C}\setminus \{0\}$, $\tau_j^{\prime} < t_i^{\prime}$ and $A_j \in \mathbb{C}, \forall j$.

\smallbreak

So, we have that:

\[
\begin{array}{ccl}
\tau_1 & \widehat{\simeq} & A_1 \tau_1^{\prime} + \sum_{j}A_{1j} \tau_{1j}^{\prime}\\
&&\\
\tau_2 & \widehat{\simeq} & A_2 \tau_2^{\prime} + \sum_{j}A_{2j} \tau_{2j}^{\prime}\\
&&\\
\vdots && \ \ \ \ \ \ \ \ \vdots \\
&&\\
\tau_{k-1} & \widehat{\simeq} & A_{k-1} \tau_{k-1}^{\prime} + \sum_{j}A_{(k-1)j} \tau_{(k-1)j}^{\prime}\\
&&\\
\tau_k & \widehat{\simeq} & A_k \tau_k^{\prime} + \sum_{j}A_{kj} \tau_{kj}^{\prime}\\
\end{array}
\]

\smallbreak

Note that each $\tau_i^{\prime}$ can occur as an element in the sum $\sum_{j}A_{pj} \tau_{pj}^{\prime}$ for $p > i$. We consider now the equation  $\sum_{i=1}^{k}\lambda_i \cdot \tau_i\ =\ 0\ ,\ \lambda_i\in \mathbb{C}, \forall i$ and we show that this holds only when $\lambda_i=0, \forall i$. Indeed, we have:

$$\sum_{i=1}^{k}\lambda_i \cdot \tau_i\ =\ 0\ \Leftrightarrow\ \lambda_k A_k \tau_k^{\prime}\ +\ \sum_{i=1}^{k}\sum_{j}\lambda_i A_ij \tau_{ij}^{\prime}\ =\ 0,$$

\noindent where $\tau_k^{\prime} > \tau_{ij}^{\prime}, \forall i, j$. So we conclude that $\lambda_k \ =\ 0$. Using the same argument we have that:

$$\sum_{i=1}^{k}\lambda_i \cdot \tau_i\ =\ 0\ \Leftrightarrow\ \sum_{i=1}^{k-1}\lambda_i \cdot \tau_i\ =\ 0\ \Leftrightarrow\ \lambda_{k-1} A_{k-1} \tau_{k-1}^{\prime}\ +\ \sum_{i=1}^{k-1}\sum_{j}\lambda_i A_ij \tau_{ij}^{\prime}\ =\ 0,$$

\noindent where $\tau_{k-1}^{\prime} > \tau_{ij}^{\prime}, \forall i, j$. So, $\lambda_{k-1} \ =\ 0$. Retrospectively we get:

$$ \sum_{i=1}^{k}\lambda_i \cdot \tau_i\ =\ 0\  \Leftrightarrow\  \lambda_i\ =\ 0,\ \forall i,$$

\noindent and so an arbitrary finite subset of $\Lambda$ is linear independent. Thus, the set $\Lambda$ is linear independent and it forms a basis for $\mathcal{S} (\rm ST)$.

\smallbreak

The proof of our main theorem is now concluded. \hfill QED

\section{Conclusions}

In this paper we gave a new basis $\Lambda$ for $\mathcal{S} (\rm ST)$, different from the Turaev-Hoste-Kidwell basis and the Morton-Aiston basis. This basis has been conjectured by J.H.~Przytycki. The new basis is appropriate for describing the handle sliding moves, whilst the old basis $\Lambda^{\prime}$ is consistent with the trace rules \cite{La2}. In a sequel paper we shall use the bases $\Lambda^{\prime}$ and $\Lambda$ of $\mathcal{S}({\rm ST})$ and the change~of~basis~matrix in order to compute the Homflypt skein module of the lens spaces $L(p,1)$.



\end{document}